\newtheorem{theorem}{Theorem}
\newtheorem{lemma}[theorem]{Lemma}
\newtheorem{proposition}[theorem]{Proposition}
\newtheorem{corollary}[theorem]{Corollary}
\newtheorem{claim}{Claim}
\numberwithin{theorem}{section}
\numberwithin{equation}{section}
\theoremstyle{definition}
\newtheorem{question}[theorem]{Question}
\newtheorem{remark}[theorem]{Remark}
\title[Recurrence, rigidity, and popular differences]{Recurrence, rigidity, and popular differences}
\date{\today}
\author{John T. Griesmer}
\email{jtgriesmer@gmail.com}
\begin{document}

\begin{abstract} We construct a set of integers $S$ such that every translate of $S$ is a set of recurrence and a set of rigidity for a weak mixing measure preserving system.  Here ``set of rigidity" means that enumerating $S$ as $(s_{n})_{n\in \mathbb N}$ produces a rigidity sequence.  This construction generalizes or strengthens results of Katznelson, Saeki (on equidistribution and the Bohr topology), Forrest (on sets of recurrence and strong recurrence), and Fayad and Kanigowski (on rigidity sequences).  The construction also provides a density analogue of Julia Wolf's results on popular differences in finite abelian groups. \end{abstract}

\maketitle

\section{Motivation}

Given a set of integers $A$, we write $d^{*}(A)$ for the upper Banach density of $A$; see Sections \ref{secPrecise} and \ref{secUBD} for exposition.  The \emph{difference set} $A-A$ is defined as $\{a-b:a,b\in A\}$.  A \emph{Bohr neighborhood} in the integers is a nonempty set of the form $\{n: p(n)>0\}$, where $p(n) = \sum_{j = 0}^{m} c_{j}\sin(\lambda_{j}n)+d_{j}\cos(\lambda_{j}n)$ is a trigonometric polynomial ($c_{j}, d_{j}, \lambda_{j}\in \mathbb R$).   Ruzsa asked the following in Section 5.8 of \cite[Part II]{RuzsaBook}.
\begin{question}[]\label{questionBohrDifference}
  If $d^{*}(A)>0$, must $A-A$ contain a Bohr neighborhood?
\end{question}
Question \ref{questionBohrDifference} was also asked in \cite{BergelsonRuzsa}, and in a slightly different form in \cite{GriesmerIsr}.
While we do not answer Question \ref{questionBohrDifference} in the present article, we answer the closely related Questions \ref{questionPopularBohr} and \ref{questionPopularDifferences}, providing support for a negative answer to Question \ref{questionBohrDifference}.  Most techniques for analyzing the difference set $A-A$ of a set $A$ having positive upper Banach density actually provide information about the \emph{$c$-popular differences} $P_{c}(A):=\{n: d^{*}(A\cap (A-n))>c\}$, where $c>0$ is some fixed number.  Note that $P_{c}(A)\subseteq A-A$ for each $c\geq 0$. The following questions now present themselves.
\begin{question}\label{questionPopularBohr}
  If $d^{*}(A)>0$, is there necessarily a $c>0$ such that $P_{c}(A)$ contains a Bohr neighborhood of some $n_{0}\in \mathbb Z$?
\end{question}
\begin{question}\label{questionPopularDifferences}
  Given $A\subseteq \mathbb{Z}$ having $d^{*}(A)>0$, is there a $c>0$, an $n_{0}\in \mathbb{Z}$, and a set $B\subseteq \mathbb{Z}$ having $d^{*}(B)>0$ such that $P_{c}(A)$ contains $n_{0}+B-B$?
\end{question}
 Our Corollary \ref{corPopularDifferences} answers Questions \ref{questionPopularBohr} and \ref{questionPopularDifferences} in the negative.  Corollary \ref{corPopularDifferences} follows from our main result, Theorem \ref{thmMainFull}, an ergodic theoretic construction generalizing some classical results in harmonic analysis and recent results in ergodic theory.

The next section summarizes some concepts from measure preserving dynamics and states our main results.

\section{Recurrence and rigidity}\label{secPrecise}

A \emph{measure preserving system} (or just ``system") $(X,\mu,T)$ is a probability measure space $(X,\mu)$, together with a map $T:X\to X$ preserving $\mu$: $\mu(T^{-1}D) = \mu(D)$ for every measurable $D\subseteq X$.  In this article, $T$ will always be invertible.   We do not mention the $\sigma$-algebra of measurable sets on $X$, as we only ever consider measurable subsets of $X$.   See any of \cite{CFS, EinsiedlerWard, Furstenberg, Glasner, Petersen, Walters} for standard definitions regarding measure preserving systems, including \emph{ergodicity}, \emph{weak mixing}, and the definition of isomorphism for measure preserving systems.

\subsection{Definitions}   A set $S\subseteq \mathbb Z$ is a:

 \begin{enumerate}
   \item[$\bullet$]  \emph{set of recurrence} if for every system $(X,\mu,T)$ and every $D\subseteq X$ having $\mu(D)>0$, there exists $n\in S$ such that $\mu(D\cap T^{n} D)>0$.
   
   \smallskip
   
   \item[$\bullet$] \emph{set of strong recurrence} if for every system $(X,\mu,T)$ and every $D\subseteq X$ with $\mu(D)>0$, there is a $c>0$ such that $\mu(D\cap T^{n} D)>c$ for infinitely many $ n \in S$.
  
     \smallskip
  
  \item[$\bullet$] \emph{set of optimal recurrence}\footnote{Also known as ``nice recurrence.''} if for every system $(X,\mu,T)$, all $\varepsilon>0$, and every $D\subseteq X$ having $\mu(D)>0$, there exists $ n\in \mathbb Z$ such that $\mu(D\cap T^{ n}D) > \mu(D)^{2} - \varepsilon$.
  
     \smallskip
      
   \item[$\bullet$]  \emph{set of rigidity} if $S$ is infinite and there is a nontrivial ergodic system $\mathbf X = (X,\mu,T)$ such that for all measurable $D\subseteq X$ and all $\varepsilon>0$, the set $\{ n \in S: \mu(D \triangle T^{ n}D)>\varepsilon\}$ is finite.  For such $\mathbf X$, we say that $S$ is \emph{set of rigidity for $\mathbf X$}.  If $S$ is a set of rigidity for a nontrivial weak mixing system $\mathbf X$, we say $S$ is a \emph{WM set of rigidity}.
 \end{enumerate}

Forrest \cite{Forrest} constructed an example of a set of recurrence which is not a set of strong recurrence.  See \cite{McCutcheon} for another proof of Forrest's result.

For technical reasons, some variations on the above definitions are in order.  Let $\mathbf{X}=(X,\mu,T)$ be a measure preserving system.

A set $S\subseteq \mathbb{Z}$ is a \emph{set of strong recurrence for} $\mathbf X$ if for every set $D\subseteq X$ having $\mu(D)>0$, there is a $c>0$ such that $\{n\in S: \mu(D\cap T^{n}D)>c\}$ is infinite.  The set $S$ is a \emph{WM set of strong recurrence} if it is a set of strong recurrence for every weak mixing system.

A sequence of integers $(s_{n})_{n\in \mathbb{N}}$ is a \emph{rigidity sequence for} $\mathbf X$ if for every set $D\subseteq X$, $\lim_{n\to \infty} \mu(D \triangle T^{s_{n}}D)=0$.  The set $S$ is a set of rigidity for $\mathbf X$ iff $(s_{n})_{n\in \mathbb{N}}$ is a rigidity sequence for $\mathbf X$ whenever $(s_{n})_{n\in \mathbb{N}}$ is an enumeration of $S$.

A system $\mathbf X$ is a \emph{rigid-recurrence system} if there is a set of rigidity $S$ for $\mathbf X$ such that every translate of $S$ is a set of recurrence, meaning $S+n$ is a set of recurrence for every $n\in \mathbb Z$.

\subsection{Results} Our main result is Theorem \ref{thmMainFull}; its proof uses measure concentration results similar to the use of Kleitman's Theorem in \cite{Forrest} and \cite{McCutcheon}, and to the measure concentration arguments of \cite{Wolf}.  See Sections \ref{secFourierTransforms} and \ref{secBohr} for  further discussion of Theorem \ref{thmMainFull} and its   relation to other work.

\begin{theorem}\label{thmMainFull} There is a set $S\subseteq \mathbb Z$ such that every translate of $S$ is both a set of recurrence and a WM set of rigidity.
\end{theorem}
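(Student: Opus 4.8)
The plan is to build $S$ as a union $S=\bigcup_k B_k$ of finite blocks placed at rapidly increasing scales, treating rigidity and recurrence by two different mechanisms that the block structure must support at once. For the rigidity half I would reduce everything to a spectral statement: by the standard Gaussian construction, a set $R\subseteq\mathbb Z$ is a WM set of rigidity once there is a continuous (atomless) probability measure $\sigma$ on $\mathbb T=\mathbb R/\mathbb Z$ with $\hat\sigma(n):=\int_{\mathbb T}e^{2\pi i nt}\,d\sigma(t)\to1$ as $n\to\infty$ within $R$, since atomlessness of $\sigma$ yields weak mixing and the Fourier convergence yields $U_T^{\,n}\to I$ strongly along $R$. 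First I would record the decisive constraint that \emph{no single} weak mixing system can witness rigidity for two distinct translates: as $U_T^{\,s+n_0}=U_T^{\,n_0}U_T^{\,s}$, rigidity along $S+n_0$ would force $U_T^{\,n_0}=I$, impossible for a nontrivial weak mixing system with $n_0\neq0$. Hence the witnessing system must depend on the translate, and for each $n_0$ I must produce an atomless $\sigma_{n_0}$ with $\hat\sigma_{n_0}(n)\to1$ along $S+n_0$. Since $\hat\sigma_{n_0}(s+n_0)\to1$ forces $e^{2\pi i(s+n_0)t}\to1$ in $L^2(\sigma_{n_0})$, the measure $\sigma_{n_0}$ must live on the resonance set $R_{n_0}=\{t\in\mathbb T:\ \|(s+n_0)t\|\to0\ \text{as}\ s\to\infty\ \text{in}\ S\}$, where $\|x\|$ is the distance from $x$ to the nearest integer; so the spectral task becomes arranging that every $R_{n_0}$ carries an atomless measure.

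For the recurrence half I would adapt the measure-concentration (Kleitman-type) forcing used by Forrest and McCutcheon, made stable under translation. The aim is a local statement: each block $B_k$, and each translate $B_k+n_0$ once $k$ is large relative to $|n_0|$, is \emph{recurrence-forcing}, i.e.\ for every system $(X,\mu,T)$ and every $D$ with $\mu(D)\ge\delta$ there is $n\in B_k$ with $\mu(D\cap T^nD)>0$. The mechanism is that if $\mu(D\cap T^nD)=0$ for all $n$ in a block, the translates $T^nD$ are essentially disjoint along a rich configuration, and a Kleitman/concentration bound then caps $\mu(D)$, contradicting $\mu(D)\ge\delta$ once the block is rich enough; this is also where the concentration estimates of Wolf enter. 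Summing the local statements as $k\to\infty$ gives that $S$ and every translate are sets of recurrence, since each fixed $n_0$ is eventually absorbed.

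The heart of the argument is choosing scales and blocks so the two mechanisms coexist, and here I expect the main obstacle. Rigidity forces $S$ to be spectrally concentrated and sparse: it must have zero density and cannot contain arbitrarily long intervals of consecutive integers, since such an interval would force the witnessing $\sigma$ to concentrate at $0$ and hence to be atomic. The subtler and harder point is rigidity of \emph{every} translate. The naive choice $B_k=\{jq_k:\ 1\le j\le L_k\}$ with $q_k\mid q_{k+1}$ and $q_{k+1}/q_k\to\infty$ makes $S$ itself a fine rigidity set, because the points with $\|q_kt\|\to0$ form a Cantor set supporting atomless measures; but its translates collapse, since for such $t$ one has $(jq_k+n_0)t\to n_0t\not\equiv0$, so $R_{n_0}$ reduces to the finite set $\tfrac1{n_0}\mathbb Z$ and carries no atomless measure. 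Overcoming this requires a genuinely multiscale design in which the limiting phase $\lim_{s\in S}st\pmod1$ is not identically $0$ but can be matched to $-n_0t$ on an atomless set for every $n_0$ at once; concretely I would distribute offsets across the scales, engineered through the mixed-radix ($q_k$-adic) expansion of $t$, so that each $R_{n_0}$ still contains a Cantor set while each block remains rich enough for the concentration argument to force recurrence after translation. Balancing these competing demands — globally sparse and spectrally resonant for all translates, yet locally rich enough to force recurrence robustly — is the crux, and it is where the quantitative measure-concentration estimates (the analogues of Kleitman's theorem and of Wolf's bounds) must be pushed through.
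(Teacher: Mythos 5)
Your proposal has the right skeleton, and it is in fact the skeleton of the paper's argument: the spectral criterion for WM rigidity via an atomless measure with $\hat\sigma(n)\to 1$ along the set (Lemma \ref{lemRigidityCriteria}), the correct observation that no single weak mixing system can witness rigidity for two distinct translates, so the witnessing measure must vary with the translate; the gluing of finite recurrence-forcing blocks at increasing scales, with each block handling translates of bounded size and a density threshold tending to $0$ (Lemmas \ref{lemRecurrenceIsFinite} and \ref{lemRecurrenceIsSelective}); and Kleitman-type measure concentration as the engine for recurrence. All of this is sound and matches the structure of the paper's proof.

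The gap is that the decisive step is never carried out: you reduce the theorem to the design problem of a single set whose translated resonance sets $R_{n_0}$ all carry atomless measures while every translated block still forces recurrence, and then you stop, calling this the ``crux'' and offering only a sketch (``distribute offsets across the scales\dots through the mixed-radix expansion''). That design problem \emph{is} the theorem, and nothing in the proposal shows it is solvable; indeed your own analysis of the $q$-adic example shows why structured arithmetic sets tend to collapse under translation, and no argument is given that the offset scheme avoids this while keeping the blocks concentration-rich. The paper resolves the tension with an ingredient you never invoke: perfect Kronecker sets. Taking disjoint perfect Kronecker sets $K_m\subseteq\mathbb T$ with continuous measures $\sigma_m$, $\sigma=\frac13\sum_m 2^{-|m|}\sigma_m$, and $f:K\to\mathcal S^1$ equal to $e_m$ on $K_m$, one defines $S_j=\{n:\int|e_n-f|\,d\sigma<\frac1j\}$; then enumerating the glued set $S$ gives $e_{s_n}\to e_m$ in $L^1(\sigma_m)$ for every $m$, so each translate $S-m$ is a WM set of rigidity (your sets $R_{n_0}$ contain the $K_{m}$'s), and --- this is the paper's main technical result, Proposition \ref{propSimpleCase} --- every translate of each $S_j$ is a set of recurrence. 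The Kronecker property (every unimodular function on $K$ is a uniform limit of characters) is precisely what supplies both halves at once: it allows arbitrary limit phases $e_m$ on the pieces $K_m$, and it lets one identify $\mathcal P$-measurable step functions $K\to\Lambda_k$ with elements of $\Lambda_k^r$, approximate a positive proportion of them by characters $e_n$ with $n$ in a translate of the given positive-density set (Lemma \ref{lemUBDchar}), and then close the recurrence argument with the concentration bound. Without this ingredient, or a proven substitute for it, the proposal remains a plan rather than a proof.
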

We prove Theorem \ref{thmMainFull} in Section \ref{secFourierTransforms}.  An immediate consequence of the theorem (and its proof):

\begin{proposition}\label{propRigidRecurrenceExists}
  There is a weak mixing rigid-recurrence system.
\end{proposition}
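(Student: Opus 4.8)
The plan is to derive this directly from Theorem \ref{thmMainFull} by unwinding the definitions, since the substantive construction has already been carried out there. Theorem \ref{thmMainFull} furnishes a single set $S \subseteq \mathbb{Z}$ with two properties holding for \emph{every} translate $S+n$: each such translate is a set of recurrence, and each such translate is a WM set of rigidity. I would extract the desired system from the rigidity half of this conclusion, applied to the untranslated copy of $S$.

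Concretely, I first apply Theorem \ref{thmMainFull} to fix such a set $S$. Taking $n = 0$, the set $S$ itself is a WM set of rigidity, so by the definition of that term there is a \emph{nontrivial weak mixing} system $\mathbf{X} = (X,\mu,T)$ for which $S$ is a set of rigidity. Next, again by the conclusion of the theorem, now ranging over all $n \in \mathbb{Z}$, every translate $S+n$ is a set of recurrence. These two facts are precisely the hypotheses in the definition of a rigid-recurrence system: there is a set of rigidity $S$ for $\mathbf{X}$ all of whose translates are sets of recurrence. Since the witnessing $\mathbf{X}$ is weak mixing, it is a weak mixing rigid-recurrence system, which is exactly the assertion.

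The one point requiring care, and presumably the reason the statement is attributed to the theorem ``and its proof,'' is that the definition of a rigid-recurrence system demands that a \emph{single} set $S$ simultaneously be a set of rigidity for the chosen $\mathbf{X}$ and have all of its translates be sets of recurrence. I would stress that the set produced by Theorem \ref{thmMainFull} is one fixed $S$, so the same $S$ plays both roles; we invoke the rigidity property only for the untranslated set $S$ (to name $\mathbf{X}$) and the recurrence property for every translate. In particular there is no need for $\mathbf{X}$ to witness rigidity of the translates as well, so no input beyond the theorem is needed. The only obstacle is thus purely definitional bookkeeping: matching the quantifier structure of ``rigid-recurrence system'' to the conclusion of Theorem \ref{thmMainFull}, and noting that nontriviality and ergodicity of $\mathbf{X}$ come for free from its role as a WM-rigidity witness.
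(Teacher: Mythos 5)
Your proof is correct and is essentially the paper's own argument: the paper offers no separate proof, presenting Proposition \ref{propRigidRecurrenceExists} as an immediate consequence of Theorem \ref{thmMainFull}, and the definitional unwinding you give (take $n=0$ to extract a nontrivial weak mixing $\mathbf{X}$ witnessing that $S$ is a WM set of rigidity, then use that all translates of the same $S$ are sets of recurrence) is exactly that deduction. Your further observation is also accurate: the statement of the theorem alone suffices, so the paper's parenthetical ``and its proof'' is not logically necessary for this proposition, though the proof does supply an explicit such system.
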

Our first corollary generalizes the main result of \cite{Forrest}.
\begin{corollary}\label{corNeverStrong}
  There is a set $S\subseteq \mathbb Z$ such that every translate of $S$ is a set of recurrence, and no translate of $S$ is a WM set of strong recurrence.  Consequently, no translate of $S$ is a set of strong recurrence.
\end{corollary}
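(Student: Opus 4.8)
The plan is to take the set $S$ furnished by Theorem \ref{thmMainFull} and show it already does the job. That every translate $S+m$ is a set of recurrence is immediate from the theorem, so the entire content is to prove that no translate is a \emph{WM} set of strong recurrence. The final sentence then comes for free: a set of strong recurrence is by definition a set of strong recurrence for every system, in particular for every weak mixing system, hence a WM set of strong recurrence; contrapositively, failing to be a WM set of strong recurrence rules out being a set of strong recurrence at all. Thus it suffices to show that each translate $S'=S+m$, which by Theorem \ref{thmMainFull} is a WM set of rigidity, fails to be a WM set of strong recurrence.

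Fix such an $S'$, enumerated as $(s_n)$, and let $\mathbf X=(X,\mu,T)$ be a nontrivial weak mixing system for which $(s_n)$ is a rigidity sequence, so the Koopman operator satisfies $U_T^{s_n}\to I$ strongly; equivalently there is a continuous probability measure $\sigma$ on $\mathbb T$ with $\widehat\sigma(s_n)\to 1$. Note first that $\mathbf X$ itself cannot witness failure of strong recurrence: rigidity forces $\mu(D\cap T^{s_n}D)\to\mu(D)>0$, so $S'$ is in fact a set of strong recurrence \emph{for} $\mathbf X$. The witnessing system must therefore be a genuinely different weak mixing system $\mathbf Y$, in which some $D$ with $\mu(D)>0$ satisfies $\mu(D\cap T^{s_n}D)\to 0$. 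Since a mixing system gives $\mu(D\cap T^{s_n}D)\to\mu(D)^2>0$, and more generally any ``mixing along $S'$'' behavior only pushes the correlations toward $\mu(D)^2$, the real target is to force the correlations \emph{below} $\mu(D)^2$, all the way to $0$: an ``antipodal'' rather than an ``independent'' limit.

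The cleanest realization I would use is a Gaussian system. Suppose we can produce a continuous symmetric probability measure $\tau$ on $\mathbb T$ with $\widehat\tau(s_n)\to -1$, and let $\mathbf G_\tau=(Y,\nu,R)$ be the real Gaussian system whose first chaos carries spectral measure $\tau$; since $\tau$ is continuous, $\mathbf G_\tau$ is weak mixing. Let $\xi$ be the real Gaussian variable spanning the first chaos, normalized so $\|\xi\|=1$, so that $\langle U_R^{s_n}\xi,\xi\rangle=\widehat\tau(s_n)\to -1$, which by $\|\xi\|=1$ forces $U_R^{s_n}\xi\to-\xi$ in $L^2(\nu)$. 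Taking $D=\{\xi>0\}$, so $\nu(D)=\tfrac12$, the convergence $\xi\circ R^{s_n}\to-\xi$ (and the fact that $\xi$ has a continuous distribution, so $\{\xi=0\}$ is null) gives $R^{-s_n}D\to D^{c}$ and hence $\nu(D\cap R^{s_n}D)\to 0$. Thus $S'$ is not a set of strong recurrence for the weak mixing system $\mathbf G_\tau$, as required. (An alternative packaging of the same idea replaces $\mathbf G_\tau$ by a $\mathbb Z/2$-extension $\tilde T(x,j)=(Tx,\,j+\phi(x))$ of $\mathbf X$ by a cocycle $\phi\colon X\to\mathbb Z/2$ with $\phi^{(s_n)}\to 1\pmod 2$ in measure; then $D=X\times\{0\}$ has $\mu(D\cap\tilde T^{s_n}D)\to 0$, and $\phi^{(s_n)}\to 1$ automatically makes $(-1)^{\phi}$ a non-coboundary, which is exactly what forces weak mixing of the extension.)

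The main obstacle is the step I have quietly assumed: producing the antipodal datum, either the continuous $\tau$ with $\widehat\tau(s_n)\to -1$ or, equivalently, the cocycle with $\phi^{(s_n)}\to 1\pmod 2$. Rigidity only supplies a continuous $\sigma$ with $\widehat\sigma(s_n)\to 1$, i.e.\ $s_n t\to 0\pmod 1$ off a $\sigma$-null set, and the passage from this to $s_n t\to\tfrac12\pmod 1$ is obstructed by the unknown parities of the $s_n$; it is therefore not a soft consequence of abstract rigidity. This is where the specific structure of $S$ built in Theorem \ref{thmMainFull} must be exploited: the Fourier-analytic measure-concentration construction underlying that theorem (the analogue of the Kleitman-type arguments of \cite{Forrest,McCutcheon}) should be arranged so as to control the relevant trigonometric sums over $S$ near $-1$ as well as near $1$, yielding the measure $\tau$ (equivalently the cocycle) for every translate at once. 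Carrying out this bookkeeping uniformly over all translates $m$, while preserving the recurrence and the weak-mixing rigidity already guaranteed, is the crux.
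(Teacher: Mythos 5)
There is a genuine gap, and you have named it yourself: your argument defers the entire difficulty to producing the ``antipodal datum'' --- a continuous measure $\tau$ with $\widehat\tau(s_n)\to -1$ (equivalently the $\mathbb Z/2$-cocycle with $\phi^{(s_n)}\to 1$) --- and you concede that this does not follow from rigidity of $S'$ and would require re-engineering the construction of Theorem \ref{thmMainFull}. As written, the proposal therefore does not prove the corollary; the crux is left unresolved. Moreover, the difficulty is real: making $\widehat\tau(s_n)\to -1$ \emph{uniformly over all translates} runs into exactly the parity-type obstructions you mention, so this route is not just incomplete but genuinely hard.

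The missing idea is that no antipodal behavior is needed, because Theorem \ref{thmMainFull} gives rigidity of \emph{every} translate, and you should use the rigidity of a \emph{different} translate than the one you are trying to kill. To show $S+m$ is not a WM set of strong recurrence, let $\mathbf X=(X,\mu,T)$ be a nontrivial weak mixing system for which $S+m-1$ is a set of rigidity, and enumerate $S+m-1$ as $(t_n)$. Rigidity gives $\mu(E\triangle T^{t_n}E)\to 0$ for every measurable $E$, hence $\mu(T D\triangle T^{t_n+1}D)\to 0$ and so $\mu(D\cap T^{t_n+1}D)\to\mu(D\cap TD)$. Since $\mathbf X$ is weak mixing, hence totally ergodic, there is a $D$ with $\mu(D)>0$ and $D\cap TD=\varnothing$; then the correlations along $S+m=\{t_n+1:n\in\mathbb N\}$ tend to $0$, so $S+m$ is not a set of strong recurrence for $\mathbf X$. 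In spectral terms: where you tried to force $U^{s_n}\to -I$ in some new system, the paper's argument (its Lemma \ref{lemRigidNotStrong}) only needs $U_T^{s_n}\to U_T$ in the system witnessing rigidity of the adjacent translate --- convergence to the shift, not to $-I$ --- which already pushes $\mu(D\cap T^{s_n}D)$ to $\mu(D\cap TD)=0$ rather than merely below $\mu(D)^2$. With that substitution your outline closes up; the rest of your reductions (that recurrence of all translates is immediate, and that failure of WM strong recurrence implies failure of strong recurrence) are correct.
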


Corollary \ref{corNeverStrong} is proved in Section \ref{secProofsOfCorollaries}.

\subsection{Popular differences}  If $G$ is an abelian group, $t\in G$, and $A,B\subseteq G$, we write $A+t$ for $\{a+t : a\in A\}$, and $A-B$ for $\{a-b : a \in A,b\in B\}$.  The difference set $A-B$ is exactly the set $\{g \in G: B\cap (A-g) \neq \varnothing\}$.

If $G$ is a finite abelian group, $A\subseteq G$, and $c\geq 0$,  the \emph{$c$-popular difference set} $P_{c}(A)$ is the set of elements $g\in G$ such that there are more than $c|G|$ pairs $(a,b) \in A\times A$ satisfying $a-b = g$.  In other words, $P_{c}(A) := \{g: |A\cap (A-g)| > c|G|\}$.  Wolf constructed dense sets $A\subseteq \mathbb Z/N\mathbb Z$ where the popular difference set of $A$ does not contain any difference set $B-B$, where $B$ is not small.  To be more precise:
\begin{theorem}[\cite{Wolf}, Theorem 1.1]\label{thmWolf}
  There is a function $c:\mathbb N \to \mathbb R$ having $\lim_{n\to \infty} c(n)=0$, and there are sets $A_{N}\subseteq \mathbb Z/N \mathbb Z$ having $|A_{N}| > N/3$ such that $P_{c(N)}(A_{N})$ does not contain any difference set $B-B$, where $B\subseteq \mathbb{Z}/N\mathbb{Z} $ has $|B| > c(N)\cdot N$.
\end{theorem}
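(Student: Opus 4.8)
The plan is to recast the statement in terms of the autocorrelation function $r_A(g) := |A \cap (A-g)| = \#\{(x,y) \in A \times A : x - y = g\}$ on $G = \mathbb Z/N\mathbb Z$, which is symmetric ($r_A(-g) = r_A(g)$) and has Fourier transform $|\widehat{1_A}|^2$. Since $P_c(A) = \{g : r_A(g) > cN\}$, writing $L := \{g : r_A(g) \le cN\}$ for its symmetric complement, a difference set $B - B$ lies in $P_c(A)$ if and only if no two elements of $B$ differ by an element of $L$ --- that is, if and only if $B$ is an independent set in the Cayley graph $\mathrm{Cay}(G, L)$. (There are no loops, as $r_A(0) = |A| > N/3 > cN$ forces $0 \notin L$.) Hence Theorem \ref{thmWolf} is equivalent to producing, for large $N$, a set $A_N \subseteq \mathbb Z/N\mathbb Z$ with $|A_N| > N/3$ whose low-autocorrelation set $L$ makes $\mathrm{Cay}(\mathbb Z/N\mathbb Z, L)$ have independence number at most $c(N)\,N$.

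I would first record why naive constructions fail, since this dictates the shape of the argument. A uniformly random dense $A$ has $r_A(g) \approx \alpha^2 N$ for every $g \neq 0$ (with $\alpha$ the density), so $L = \varnothing$ and $P_c(A) = G$ contains every difference set. Conversely, any single-scale structured $A$ --- a union of intervals, an arithmetic progression, a level set of one or a few characters --- has autocorrelation controlled by a few large Fourier coefficients, so the shape of $L$ is governed by those same few frequencies; a generalized arithmetic progression $B$ of positive density adapted to them then satisfies $(B-B) \cap L = \varnothing$, whence $B - B \subseteq P_c(A)$. The obstruction is structural: a large $L$ forces the Fourier energy of $A$ to concentrate, while a $\mathrm{Cay}(G,L)$ with small independence number requires $L$ to be spread across many incompatible frequencies. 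Reconciling these is the crux.

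The construction I would use resolves this tension by spreading the low-autocorrelation set across many scales at once, in the spirit of a finite Riesz product (and mirroring the multi-scale towers behind the recurrence-without-strong-recurrence examples of \cite{Forrest} and \cite{McCutcheon}). Concretely, work first in a product group $G = (\mathbb Z/m\mathbb Z)^k$ with $N = m^k$, and define $A$ as a level set of a sum $\sum_{j} \varphi(x_j)$ of independent coordinate functions, calibrated so that $|A| > N/3$; independence across coordinates lets one compute $r_A(g)$ as a product over the $k$ scales. One then shows that $g$ lies in $L$ unless $g$ is ``aligned'' with $A$ in a large fraction of coordinates, and that such alignment in many coordinates at once is rare. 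The key step is a concentration-of-measure estimate of exactly the Kleitman type invoked throughout this subject, including in \cite{Wolf}: any candidate independent set $B$ with $|B| > cN$ contains two elements aligned on too few scales, forcing their difference into $L$. Concentration is what tames the union bound over the exponentially many candidate sets $B$, reducing the danger to highly structured $B$, which the multi-scale design then catches on some scale.

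The main obstacle is precisely this last step: calibrating $\varphi$, $m$, $k$, and the level so that $A$ stays above density $1/3$ while the autocorrelation is forced below $c(N)\,N$ on a set meeting the difference set of every $B$ with $|B| > c(N)\,N$, all with $c(N) \to 0$. The two requirements pull in opposite directions --- density wants $A$ spread out, while a rich $L$ wants the autocorrelation to collapse on large sets --- and only a careful choice of parameters, quantified by the concentration inequality, keeps both in force. Finally I would transfer the example from $(\mathbb Z/m\mathbb Z)^k$ to $\mathbb Z/N\mathbb Z$ (via the Chinese Remainder Theorem for suitable $N$, or by arguing directly in a cyclic model) and read off the decay rate $c(N) \to 0$ from the concentration bound.
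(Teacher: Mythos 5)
First, a point of order: the paper does not prove Theorem \ref{thmWolf} at all --- it is quoted, with attribution, from \cite{Wolf} (Theorem 1.1) --- so your proposal can only be compared with Wolf's own argument and with the concentration machinery this paper develops in Section \ref{secMC} for its analogues. Measured that way, your skeleton is the right one: Wolf's construction is a niveau-type level set (in a model product group, the set of $x$ whose coordinate sum exceeds its mean by a suitable multiple of the standard deviation), and the lemma that defeats dense candidate sets $B$ is exactly Kleitman-type measure concentration, the same engine as Lemmas \ref{lemKleitmanGeneral} and \ref{lemKleitmanRecurrence} here. Your Cayley-graph reformulation and your identification of the tension (density of $A$ versus richness of the low-autocorrelation set $L$) are both accurate.

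That said, three steps are genuinely wrong or missing. (1) The claim that ``independence across coordinates lets one compute $r_A(g)$ as a product over the $k$ scales'' is false: $A$ is a level set of a \emph{sum}, not a product set, so $1_A$ does not factor and neither does $r_A$. What is true, and what the argument needs, is that for uniform $x$ the pairs $\bigl(\varphi(x_j),\varphi(x_j-g_j)\bigr)$ are independent across $j$, so the joint tail of the two sums obeys a bivariate large-deviation estimate; $r_A(g)$ is then small whenever $g$ has many ``misaligned'' coordinates, because the correlation between the two sums drops --- no product formula is available or needed. (2) Concentration does not ``tame a union bound over the exponentially many candidate sets $B$,'' and there is no reduction to structured $B$: Kleitman's theorem (compare Lemma \ref{lemKleitmanGeneral}) is applied to each dense $B$ individually --- a set whose difference set avoids a Hamming ball of radius $t$ has density at most $\exp(-t^2/4r)$ --- so any $B$ with $|B|>c(N)N$ automatically has $B-B$ meeting $L$. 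As written, your key step is a correct statement accompanied by an incorrect mechanism, which suggests the actual proof of that step still needs to be supplied. (3) The transfer to $\mathbb Z/N\mathbb Z$ is the technical heart of \cite{Wolf}, and your route to it fails: $\mathbb Z/m^k\mathbb Z$ is not isomorphic to $(\mathbb Z/m\mathbb Z)^k$, and the Chinese Remainder Theorem produces product structure only for $N$ with many distinct prime factors, hence says nothing when $N$ is prime --- a case the theorem covers. Wolf, following Ruzsa's niveau sets, defines $A_N$ via binary digit sums of residues in $[0,N)$ and must control the carries that destroy the product structure; some substitute for that analysis is unavoidable, and ``arguing directly in a cyclic model'' is a placeholder, not an argument.
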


  Corollary \ref{corPopularDifferences} provides an analogue of Theorem \ref{thmWolf} in terms of upper Banach density, while Corollary \ref{corPopularRecurrence} is an analogue of Theorem \ref{thmWolf} for measure preserving systems.

\subsection{Upper Banach density and F{\o}lner sequences}    If $A\subseteq \mathbb{Z}$, the \emph{upper Banach density} of $A$ is $d^{*}(A):=\lim_{N\to \infty} \sup_{M \in \mathbb{Z}} \frac{|A\cap [M,M+N-1]|}{N}$.

A \emph{F{\o}lner sequence} for $\mathbb Z$ is a sequence $(\Phi_{j})_{j\in \mathbb N}$ of subsets of $\mathbb Z$ satisfying $\lim_{j\to \infty} \frac{|\Phi_{j} \triangle (\Phi_{j}+n)|}{|\Phi_{j}|}=0$ for every $n\in \mathbb Z$.  A sequence of intervals $I_{j}= \{M_{j}, M_{j}+1,\dots,N_{j}\}$ is a F{\o}lner sequence iff $\lim_{j\to \infty} |I_{j}|  = \infty$.  If $A\subseteq \mathbb Z$, and $\mathbf \Phi=(\Phi_{j})_{j\in \mathbb N}$ is a F{\o}lner sequence, we write $d_{\mathbf{\Phi}}(A)$ for $\lim_{j\to \infty} \frac{|A \cap \Phi_{j}|}{|\Phi_{j}|}$. Observe that $d^{*}(A)$ is the maximum of all values of $d_{\mathbf \Phi}(A)$ where $\mathbf \Phi$ is a F{\o}lner sequence.

For a set of integers $A\subseteq \mathbb Z$, an analogue of the $c$-popular difference set is $P_{c}(A):= \{n \in \mathbb Z: d^{*}(A\cap (A-n)) > c\}$.  For a measure preserving system $(X,\mu,T)$ and a set $D\subseteq X$, the analogue of the $c$-popular difference set is the \emph{$c$-popular recurrence set} $R_{c}(D):=\{n: \mu(D\cap T^{n}D)>c\}$.  From Corollary \ref{corNeverStrong} we will deduce the following analogues of Theorem \ref{thmWolf}.

\begin{corollary}\label{corPopularRecurrence}
  There is a weak mixing system $(X,\mu,T)$ such that for all $D\subseteq X$ having $\mu(D)>0$ and $D\cap TD = \varnothing$, and all $c>0$, the set $R_{c}(D)$ does not contain a set of the form $n_{0}+B-B$, where $d^{*}(B)>0$ and $n_{0}\in \mathbb Z$.  Consequently, $R_{c}(D)$ does not contain a Bohr neighborhood.
\end{corollary}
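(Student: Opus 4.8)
The plan is to take for $(X,\mu,T)$ a weak mixing rigid-recurrence system as furnished by Proposition~\ref{propRigidRecurrenceExists}, and to let $S$ be its distinguished set of rigidity, every translate of which is a set of recurrence. Fix $D$ with $\mu(D)>0$ and $D\cap TD=\varnothing$, put $\alpha=\mu(D)$ and $g=\mathbf 1_D-\alpha$, and let $\sigma$ be the spectral measure of $g$ on $\mathbb T=\mathbb R/\mathbb Z$, so that $\mu(D\cap T^nD)=\alpha^2+\widehat\sigma(n)$ with $\widehat\sigma(n)=\int_{\mathbb T}e^{2\pi in\theta}\,d\sigma(\theta)$. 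Three facts drive the argument: weak mixing makes $\sigma$ a continuous (atomless) measure; the hypothesis $D\cap TD=\varnothing$ gives $\mu(D\cap TD)=0$, i.e.\ $\widehat\sigma(1)=-\alpha^2$; and rigidity of $S$ translates spectrally into $\int_{\mathbb T}|e^{2\pi is\theta}-1|^2\,d\sigma=\mu(D\triangle T^sD)\to0$, i.e.\ $e^{2\pi is\theta}\to 1$ in $L^2(\sigma)$ as $s\to\infty$ in $S$.

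Since $R_c(D)\supseteq n_0+B-B$ for some $c>0$ is equivalent to $\inf_{m\in n_0+B-B}\mu(D\cap T^mD)>0$, it suffices to produce, for every $n_0$ and every $B$ with $d^*(B)>0$, integers $m\in n_0+B-B$ along which $\mu(D\cap T^mD)\to 0$. Here I would invoke that a set of recurrence is intersective: the translate $S+(1-n_0)$ is a set of recurrence, so it meets $B-B$ in infinitely many points. Each such point is of the form $s+(1-n_0)=b-b'$ with $s\in S$ (which we may take arbitrarily large) and $b,b'\in B$; equivalently $m:=s+1=n_0+(b-b')\in n_0+B-B$ with $s\in S$, $s\to\infty$. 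For these $m$ the rigidity input and Cauchy--Schwarz give $\widehat\sigma(m)=\int_{\mathbb T}e^{2\pi i\theta}\,e^{2\pi is\theta}\,d\sigma\to\int_{\mathbb T}e^{2\pi i\theta}\,d\sigma=\widehat\sigma(1)=-\alpha^2$, so that $\mu(D\cap T^mD)=\alpha^2+\widehat\sigma(m)\to 0$ along a sequence in $n_0+B-B$. Thus for each $c>0$ some $m\in n_0+B-B$ avoids $R_c(D)$, which is the assertion.

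The Bohr statement then follows formally. Writing $\|x\|$ for the distance from $x$ to $\mathbb Z$, any Bohr neighborhood $U$ contains, around a point $n_1$ where the defining trigonometric polynomial is positive, a basic neighborhood $\{n:\|(n-n_1)\lambda_j\|<\delta\text{ for all }j\}$; with $B_0=\{n:\|n\lambda_j\|<\delta/2\text{ for all }j\}$ one has $d^*(B_0)>0$ and $n_1+B_0-B_0\subseteq U$. Hence if $R_c(D)$ contained a Bohr neighborhood it would contain $n_1+B_0-B_0$, contradicting what was just proved.

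The step I expect to be most delicate is guaranteeing that $\bigl(S+(1-n_0)\bigr)\cap(B-B)$ contains arbitrarily large elements, rather than merely being nonempty or accumulating only at the trivial difference $0$. I would secure this by first thinning $B$ to a set $B'$ with $d^*(B')>0$ whose difference set avoids any prescribed finite set of nonzero integers (a residue-class selection suffices), so that if the intersection were finite one could delete its nonzero members and contradict the recurrence of the translate $S+(1-n_0)$ applied to $B'$. Everything else---continuity of $\sigma$, the identity $\widehat\sigma(1)=-\alpha^2$, and the $L^2(\sigma)$-convergence along $S$---is a direct reading of the spectral theorem and the definitions of weak mixing and rigidity.
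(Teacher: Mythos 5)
Your overall strategy coincides with the paper's: take a weak mixing rigid-recurrence system from Proposition \ref{propRigidRecurrenceExists}, use rigidity along $S$ together with $D\cap TD=\varnothing$ to force $\mu(D\cap T^{s+1}D)\to 0$ along $S$, and play this against the claim that $S+(1-n_0)$ must meet $B-B$ in infinitely many points. Your spectral-measure formulation of the rigidity step is a harmless detour (the paper argues directly that $\mu(TD\triangle T^{s+1}D)=\mu(D\triangle T^{s}D)\to 0$, hence $\mu(D\cap T^{s+1}D)\to\mu(D\cap TD)=0$), and your Bohr-neighborhood reduction at the end is the same standard one the paper invokes.

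The gap is exactly at the step you flagged, and your proposed repair does not close it. Your thinning argument runs: if $\bigl(S+(1-n_0)\bigr)\cap(B-B)$ were finite, pass to $B'\subseteq B$ with $d^{*}(B')>0$ and $(B'-B')\cap F=\varnothing$, where $F$ is the set of nonzero members of that intersection; then recurrence of $S+(1-n_0)$ applied to $B'$ (via Lemma \ref{lemCorrespondence}) produces an element of $\bigl(S+(1-n_0)\bigr)\cap(B'-B')$, which is impossible. But it is \emph{not} impossible when $0\in S+(1-n_0)$, i.e.\ when $n_0-1\in S$: the element $0$ lies in every difference set $B'-B'$, and a set of recurrence containing $0$ satisfies the recurrence property trivially, so no contradiction arises. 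Since $S$ is infinite and the corollary quantifies over all $n_0\in\mathbb Z$, the bad case $n_0\in S+1$ genuinely occurs; nor can you dodge it by switching to the shift $-1$ (both $n_0-1$ and $n_0+1$ may lie in $S$, and no shift other than $\pm 1$ is available, because the hypothesis only gives $\mu(D\cap T^{\pm 1}D)=0$). What is really needed is that deleting the point $n_0-1$ from $S$ leaves a set whose relevant translate is still a set of recurrence, and this is not a formal consequence of your hypotheses: singletons such as $\{0\}$ are sets of recurrence, so removing points can in principle destroy recurrence -- this is precisely why the paper warns that Lemma \ref{lemShiftInfinite} ``may seem obvious'' but is not. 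The paper closes the hole by a different mechanism: Lemma \ref{lemExpanding} upgrades ``every translate of $S$ is a set of recurrence'' to a two-set statement, which is then applied in a product system $\mathbf X\times\mathbf Y$ with $\mathbf Y$ weak mixing, choosing $E\subseteq Y$ and sets $C_1=D\times E$, $C_2=D\times R^{-n_0}E$ so that $C_1\cap\tilde{T}^{m}C_2=\varnothing$ for every $m$ in the prescribed finite set, \emph{including} the trivial return. Your residue-class thinning can exclude any finite set of nonzero differences, but it can never exclude $0$; you should replace that step by (or re-prove) Lemma \ref{lemShiftInfinite}.
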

See Section  \ref{secBohr} for discussion of the Bohr topology.

  Corollary \ref{corPopularRecurrence} should be contrasted with the classical result that for every system $\mathbf X = (X,\mu,T)$, every $D\subseteq X$, and all $c<\mu(D)^{2}$, $R_{c}(D)$ contains a set of the form $U\setminus C$, where $U\subseteq \mathbb Z$ is a Bohr neighborhood of $0$ and $d^{*}(C)=0$. Furthermore, if $\mathbf{X}$ is weak mixing, the complement $E:=\mathbb Z \setminus R_{c}(D)$ has $d^{*}(E)=0$.  These results can be understood in terms of Fourier transforms of measures; see Section 1.3 of \cite{BHK} for a brief exposition.

\begin{corollary}\label{corPopularDifferences}
  For all $\varepsilon>0$, there is a set $A\subseteq \mathbb Z$ having $d^{*}(A)>\frac{1}{2} - \varepsilon$ such that for all $c>0$, the set $P_{c}(A)$ does not contain a set of the form $n_{0}+B-B$, where $d^{*}(B)>0$ and $n_{0}\in \mathbb Z$.  Consequently, $P_{c}(A)$ does not contain a Bohr neighborhood.
\end{corollary}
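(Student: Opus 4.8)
The plan is to deduce Corollary \ref{corPopularDifferences} from the dynamical Corollary \ref{corPopularRecurrence} by a converse Furstenberg correspondence: from the weak mixing system $(X,\mu,T)$ supplied by Corollary \ref{corPopularRecurrence} I would manufacture a set $A\subseteq\mathbb Z$ whose popular difference sets coincide \emph{exactly} with the popular recurrence sets of a suitable $D\subseteq X$, i.e.\ $P_c(A)=R_c(D)$ for every $c>0$. Once this identity is in hand the conclusion transfers verbatim: if $n_0+B-B\subseteq P_c(A)$ with $d^{*}(B)>0$, then $n_0+B-B\subseteq R_c(D)$, contradicting Corollary \ref{corPopularRecurrence}.

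First I would arrange the ambient system to be uniquely ergodic. The property asserted in Corollary \ref{corPopularRecurrence}---that $R_c(D)$ contains no $n_0+B-B$ for every $D$ with $\mu(D)>0$ and $D\cap TD=\varnothing$---is invariant under measure-theoretic isomorphism, since an isomorphism carries such a $D$ to a set of the same measure satisfying the same disjointness and having identical correlations $\mu(D\cap T^{n}D)$. As weak mixing implies ergodicity, the Jewett--Krieger theorem lets me replace $(X,\mu,T)$ by an isomorphic uniquely ergodic homeomorphism of a Cantor set, retaining weak mixing and the conclusion of Corollary \ref{corPopularRecurrence}. In this model I need a \emph{clopen} $D$ with $D\cap TD=\varnothing$ and $\mu(D)>\tfrac12-\varepsilon$; note that $D\cap TD=\varnothing$ forces $\mu(D)\le\tfrac12$, so this target is essentially optimal. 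To produce it I would take a clopen Kakutani--Rokhlin partition $X=\bigsqcup_i\bigsqcup_{0\le j<h_i}T^{j}C_i$ (with clopen bases $C_i$) having all column heights large, and let $D$ be the union of the even-indexed levels $T^{j}C_i$ of each column. Taking all heights even makes $D$ clopen with $D\cap TD=\varnothing$ and $\mu(D)$ equal to (or, for large heights, arbitrarily close to) $\tfrac12$.

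With $D$ fixed I would choose any $x\in X$ and set $A:=\{n\in\mathbb Z: T^{n}x\in D\}$. The key computation is that $k\in A\cap(A-n)$ iff $T^{k}x\in D\cap T^{-n}D$, so $\mathbf 1_{A\cap(A-n)}(k)=\mathbf 1_{D\cap T^{-n}D}(T^{k}x)$. Because $D$ is clopen, $\mathbf 1_{D\cap T^{-n}D}$ is continuous, and unique ergodicity yields uniform convergence of its Birkhoff averages over \emph{all} intervals; hence $d^{*}(A\cap(A-n))=\mu(D\cap T^{-n}D)=\mu(D\cap T^{n}D)$ exactly, for every $n$ (and for $n=0$ this gives $d^{*}(A)=\mu(D)>\tfrac12-\varepsilon$). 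This exact equality is the crux: it upgrades the lower bound $d^{*}\ge\mu$ coming from a generic point into a two-sided identity, and it is precisely where unique ergodicity together with the clopenness of $D$ is indispensable---with a merely measurable $D$ one controls only the interval F{\o}lner averages and not the supremum over all intervals defining $d^{*}$. Consequently $P_c(A)=R_c(D)$ for all $c>0$, and the contradiction above proves the first assertion.

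For the final clause, I would observe that any nonempty Bohr neighborhood $U=\{n:p(n)>0\}$ contains, around any $n_0$ with $p(n_0)>0$, a translate $n_0+W$ of a symmetric Bohr-$0$ neighborhood $W=\{n:\|\lambda_j n\|<2\delta\text{ for all }j\}$ (by continuity of $p$ in the Bohr topology), where $\|t\|$ denotes the distance from $t$ to the nearest integer. The Bohr set $B=\{n:\|\lambda_j n\|<\delta\text{ for all }j\}$ has positive density and satisfies $B-B\subseteq W$, so $n_0+B-B\subseteq U$. Thus a $P_c(A)$ containing a Bohr neighborhood would contain some $n_0+B-B$ with $d^{*}(B)>0$, already excluded. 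I expect the main obstacle to be the clopen-tower construction and the attendant exact density identity $d^{*}(A\cap(A-n))=\mu(D\cap T^{n}D)$; once $P_c(A)=R_c(D)$ is secured, the rest is bookkeeping.
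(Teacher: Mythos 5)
Your proposal is correct and follows essentially the same route as the paper: the paper likewise passes to a Jewett--Krieger minimal uniquely ergodic totally disconnected model, produces a clopen $D$ with $D\cap TD=\varnothing$ and $\mu(D)>\tfrac12-\varepsilon$ from a clopen Rohlin tower of even height, sets $A=\{n: T^{n}x\in D\}$, and uses unique ergodicity plus clopenness to get the exact identity $d^{*}(A\cap(A-n))=\mu(D\cap T^{n}D)$ before invoking Corollary \ref{corPopularRecurrence}. The only cosmetic differences are your use of a full Kakutani--Rokhlin partition (rather than a single tower covering all but $\varepsilon$ of the space) and your spelled-out Bohr-neighborhood argument for the final clause, which the paper delegates to the facts listed in the proof of Proposition \ref{prop2Topologies}.
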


In fact, the examples $A$ in Corollary \ref{corPopularDifferences} can be taken to have \emph{uniform density}, meaning $\lim_{n\to \infty} \frac{|A\cap \Phi_{n}|}{|\Phi_{n}|}=d^{*}(A)$ for every F{\o}lner sequence $(\Phi_{n})_{n\in \mathbb N}$; this is proved in Proposition \ref{propUniform}.  In particular, there are syndetic\footnote{A set $S\subseteq \mathbb{Z}$ is \emph{syndetic} if there is a finite set $F$ such that $F+S = \mathbb{Z}$.} sets $A$ none of whose popular difference sets $P_{c}(A)$, $c>0$, contain a translate of a difference set $n_{0}+B-B$, where $d^{*}(B)>0$.

\begin{remark}
  We state the conclusion of Corollary \ref{corPopularRecurrence} in terms of differences sets $B-B$, where $d^{*}(B)>0$, rather than recurrence sets $R_{0}(D)$.  Lemma \ref{lemForwardAndReverseCorrespondence} shows that it makes no difference whether we use $R_{0}(D)$ or $B-B$ in the conclusion.
\end{remark}

%

\subsection{Outline of the article} Theorem \ref{thmMainFull} is proved in Section \ref{secFourierTransforms}.  We prove Corollaries \ref{corNeverStrong}, \ref{corPopularRecurrence}, and \ref{corPopularDifferences} in Section \ref{secProofsOfCorollaries}.  Section \ref{secBohr} discusses the Bohr topology and how our results are related to the existing literature. Section \ref{secMC} summarizes some applications of measure concentration underlying the proof of Theorem \ref{thmMainFull}.  Section \ref{secProofOfMain} contains the proof of Proposition \ref{propSimpleCase}, which is the main technical tool in the proof of Theorem \ref{thmMainFull}. Sections \ref{subsecSetsOfRecurrence}--\ref{secCorrespondence} contain lemmas needed in the preceding sections.

\section{Fourier transforms of measures, Proof of Theorem \ref{thmMainFull}}\label{secFourierTransforms}

Proposition \ref{propSimpleCase}, the main technical fact underlying the proof of Theorem \ref{thmMainFull}, is of independent interest.  Before stating the proposition, we briefly review some background on Fourier transforms of measures and Kronecker sets.

Here $\mathbb T:=\mathbb R/\mathbb Z$ is the torus with the usual topology.  We identify $\mathbb T$ with the interval $[0,1)$, so the functions $e_{n}:[0,1)\to \mathbb C$ given by $e_{n}(x):= \exp(2\pi i n x)$, $n\in \mathbb Z$, correspond to the characters of the group $\mathbb T$.

A Borel measure $\mu$ on a topological space $X$ is \emph{continuous} if $\mu(\{x\})=0$ for every $x\in X$.

If $\sigma$ is a measure on $\mathbb T$, its Fourier transform is the function $\hat{\sigma}:\mathbb Z\to \mathbb{C}$ given by $\hat{\sigma}(n) = \int e_{n}(\theta)\, d\sigma(\theta)$.  For a Borel probability measure $\sigma$ on $\mathbb{T}$ and a sequence of integers $(s_{n})_{n\in \mathbb N}$, the condition $\lim_{n\to \infty} \hat{\sigma}(s_{n})=1$ is equivalent to $\lim_{n\to \infty} \int |\exp(2\pi i s_{n} \theta) -1| \, d\sigma(\theta)=0$.  Under the additional assumption that $\sigma$ is continuous, these conditions imply that the sequence $(s_{n})_{n\in \mathbb{N}}$ is a rigidity sequence for a weak mixing system, as the following lemma states.

\begin{lemma}\label{lemRigidityCriteria}
   \begin{enumerate}
   \item[(i)] A sequence  $(s_{n})_{n\in \mathbb{N}}$ is a rigidity sequence for a weak mixing system if and only if there is a continuous measure $\sigma$ on $\mathbb T$ such that $\lim_{n\to \infty} \|e_{s_{n}}-1\|_{L^{1}(\sigma)}=0$.  Consequently:

\item[(ii)] A translated sequence $(s_{n}-m)_{n\in \mathbb N}$ is a rigidity sequence for a weak mixing system if and only if there is a continuous probability measure $\sigma$ on $\mathbb T$ such that $\lim_{n\to \infty} \|e_{s_{n}}-e_{m}\|_{L^{1}(\sigma)}=0$.
\end{enumerate}
\end{lemma}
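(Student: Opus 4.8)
The plan is to prove part (i) as the substantive claim and then derive part (ii) as an immediate formal consequence by absorbing the translation into the measure. For part (i), I would establish the two directions separately.

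\textit{The forward direction} assumes $(s_n)$ is a rigidity sequence for some nontrivial weak mixing system $\mathbf{X} = (X,\mu,T)$ and produces the continuous measure $\sigma$ on $\mathbb{T}$. The natural source of $\sigma$ is the spectral theory of the unitary (Koopman) operator $U_T f = f\circ T^{-1}$ acting on $L^2(\mu)$. I would pick a non-constant $f\in L^2(\mu)$ (available since $\mathbf{X}$ is nontrivial) orthogonal to the constants, and let $\sigma$ be its spectral measure, so that $\langle U_T^n f, f\rangle = \int e_n\, d\sigma$ for all $n$. Rigidity, expressed via $\mu(D\triangle T^n D)\to 0$ along $S$, translates into $U_T^{s_n} f \to f$ in $L^2$ for the relevant $f$ (one must pass from indicator functions of sets $D$ to the chosen $f$; approximating $f$ by simple functions handles this), which gives $\int e_{s_n}\, d\sigma \to \sigma(\mathbb{T})$. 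Combined with $\|U_T^{s_n} f\|_2 = \|f\|_2$, the $L^2$ convergence $\|U_T^{s_n}f - f\|_2\to 0$ yields $\int |e_{s_n}-1|^2\, d\sigma\to 0$, hence $\|e_{s_n}-1\|_{L^1(\sigma)}\to 0$ by Cauchy--Schwarz. The key point making $\sigma$ \emph{continuous} is weak mixing: weak mixing of $\mathbf{X}$ is equivalent to $U_T$ having no non-constant eigenfunctions, i.e.\ the spectral measure of any $f\perp \text{constants}$ has no atoms, so the maximal spectral type is continuous off the point $0$. I would normalize $\sigma$ to a probability measure.

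\textit{The reverse direction} assumes a continuous probability measure $\sigma$ with $\|e_{s_n}-1\|_{L^1(\sigma)}\to 0$ and must build a weak mixing system for which $(s_n)$ is rigid. The standard construction is to realize $\sigma$ as the spectral measure of a Gaussian measure preserving system (or, alternatively, to use $\sigma$ to construct a measure on a compact group / the appropriate model): a continuous measure $\sigma$ gives, via the Gaussian functor, a weak mixing system $\mathbf{X}_\sigma$ whose reduced maximal spectral type is $\sigma$ and whose weak mixing is precisely equivalent to $\sigma$ being continuous (no atoms). For this system, $\int e_{s_n}\, d\sigma\to 1$ forces the first chaos to be rigid along $(s_n)$, and since rigidity on the first chaos propagates to the whole system (rigidity of the generating Gaussian variables implies $U^{s_n}\to I$ strongly), $(s_n)$ is a rigidity sequence for $\mathbf{X}_\sigma$.

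\textit{Part (ii)} follows formally: the sequence $(s_n - m)$ is a rigidity sequence for a weak mixing system iff (by part (i)) there is a continuous probability $\sigma$ with $\|e_{s_n-m}-1\|_{L^1(\sigma)}\to 0$; since $|e_{s_n-m}-1| = |e_{-m}|\,|e_{s_n}-e_m| = |e_{s_n}-e_m|$ pointwise, this is literally the stated condition $\|e_{s_n}-e_m\|_{L^1(\sigma)}\to 0$.

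\textbf{Main obstacle.} I expect the genuine difficulty to lie in the reverse direction of (i): cleanly producing a \emph{weak mixing} system from an arbitrary continuous $\sigma$ and verifying rigidity on the full system rather than merely on a single spectral component. The Gaussian construction handles this, but one must be careful that continuity of $\sigma$ is exactly what delivers weak mixing (absence of eigenvalues) and that $L^1(\sigma)$-convergence of $e_{s_n}$ to $1$ upgrades to strong operator convergence $U^{s_n}\to I$ on all of $L^2$ of the Gaussian system. The forward direction is comparatively routine spectral theory, with the only subtlety being the standard reduction from sets $D$ to $L^2$ functions and the use of weak mixing to guarantee the atomlessness of $\sigma$.
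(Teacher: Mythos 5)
Your proposal is correct and takes essentially the same route as the paper: the paper handles part (i) by citing Proposition 2.10 of \cite{BergEtAl}, whose proof is precisely the spectral-measure (forward) and Gaussian-system (reverse) argument you outline, and it derives part (ii) exactly as you do, by multiplying the integrand in $\|e_{s_{n}-m}-1\|_{L^{1}(\sigma)}$ by the unimodular function $e_{m}$. The only detail you might add is that the Gaussian construction requires first symmetrizing $\sigma$, which is harmless since $|e_{s_{n}}(-x)-1|=|e_{s_{n}}(x)-1|$ pointwise.
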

See Proposition 2.10 of \cite{BergEtAl} for a proof of Lemma \ref{lemRigidityCriteria} (i) and further exposition.  Part (ii) follows from part (i) and multiplying the integrand in $\|e_{s_{n}-m}-1\|_{L^{1}(\sigma)}$ by $e_{m}$.

\subsection{Kronecker sets} The unit circle $\{z\in \mathbb C: |z| =1\}$ is denoted by $\mathcal S^{1}$.

A set $K\subseteq\mathbb T$ is a \emph{Kronecker set} if for every continuous function $f:K\to \mathcal S^{1}$ and all $\varepsilon>0$, there is an exponential function $e_{n}(x)=\exp(2\pi i n x)$ such that $\sup_{x\in K}|f(x)-e_{n}(x)| < \varepsilon$.  The group $\mathbb T$ contains a nonempty compact Kronecker set with no isolated points (\cite[Section 2]{HewittKakutani}; see also \cite{GrahamMcGehee,Rudin}).  In other words, $\mathbb T$ contains a nonempty perfect Kronecker set. Consequently, there are continuous probability measures supported on Kronecker sets. 

 \begin{proposition}\label{propSimpleCase}
Let $\sigma$ be a continuous probability measure supported on a nonempty perfect Kronecker set.

\begin{enumerate}
  \item[(a)] For all $\varepsilon>0$, every translate of the set
\[
  S_{\varepsilon} := \Bigl\{n : \int |e_{n}(x) - 1| \,d\sigma(x) <  \varepsilon\Bigr\}
\]
is a set of recurrence.

  \item[(b)] Let $f:K\to \mathcal{S}^{1}$ be a measurable function and $\varepsilon>0$.  Every translate of the set
\[
Q_{\varepsilon,f} := \Bigl\{n : \int |e_{n}(x) - f(x)| \,d\sigma(x) <  \varepsilon\Bigr\}
\]
is a set of recurrence.
\end{enumerate}\end{proposition}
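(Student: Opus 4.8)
The plan is to prove the stronger statement that $Q_{\varepsilon,f}$ is itself a set of recurrence for every measurable $f\colon K\to\mathcal S^{1}$; part (a) is the case $f\equiv 1$, and since $e_{n}e_{m}=e_{n+m}$ and $|e_{m}|=1$, one checks that the translate $Q_{\varepsilon,f}+m$ equals $Q_{\varepsilon,fe_{m}}$ (multiply the integrand by the unimodular $e_{m}$), with $fe_{m}$ again measurable. Thus this single statement yields the assertion about all translates in both (a) and (b). First I would pass to a compact group picture. Writing $u^{n}:=(e_{n}(x))_{x\in K}$ inside $b_{K}:=\prod_{x\in K}\mathcal S^{1}$, the Kronecker property gives that $\{u^{n}:n\in\mathbb Z\}$ is dense in the full product $b_{K}$: any finitely many target coordinates can be matched by interpolating a continuous $\mathcal S^{1}$-valued function on $K$ and then approximating it uniformly by some $e_{n}$. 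The formula $d_{\sigma}(w,w'):=\int_{K}|w_{x}-w'_{x}|\,d\sigma(x)$ defines a translation-invariant continuous pseudometric on $b_{K}$, and $v_{f}:=(f(x))_{x\in K}\in b_{K}$, so $Q_{\varepsilon,f}=\{n:d_{\sigma}(u^{n},v_{f})<\varepsilon\}$ is exactly the set of return times of the orbit $(u^{n})$ to the open $d_{\sigma}$-ball $B:=\{w:d_{\sigma}(w,v_{f})<\varepsilon\}$.

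Next, given a system $(X,\mu,T)$ and $D$ with $\mu(D)>0$, I would split $\mathbf 1_{D}=f_{Z}+g$, where $f_{Z}$ is the projection onto the Kronecker factor and $g$ lies in its orthocomplement. The cross terms vanish (the Kronecker factor is $T$-invariant), so $\mu(D\cap T^{n}D)=\int f_{Z}\,f_{Z}\!\circ T^{n}\,d\mu+\int g\,g\!\circ T^{n}\,d\mu$. Expanding $f_{Z}$ in eigenfunctions gives $\int f_{Z}f_{Z}\!\circ T^{n}=\sum_{j}|c_{j}|^{2}\cos(2\pi n\gamma_{j})$ with $\sum_{j}|c_{j}|^{2}=\|f_{Z}\|_{2}^{2}\ge\mu(D)^{2}>0$; choosing finitely many dominant frequencies $\gamma_{1},\dots,\gamma_{L}$ and small $\tau,\tau'$, this quantity exceeds a fixed $\eta_{0}>0$ whenever $n$ lies in the Bohr set $W:=\{n:\|n\gamma_{j}\|<\tau',\ 1\le j\le L\}$. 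The spectral measure of $g$ is continuous, so by Wiener's lemma $\{n:|\int g\,g\!\circ T^{n}|\ge\eta_{0}/2\}$ has density $0$. Hence it suffices to show that $Q_{\varepsilon,f}\cap W$ has positive density: any $n$ in this set avoiding the density-zero exceptional set satisfies $\mu(D\cap T^{n}D)\ge\eta_{0}-\eta_{0}/2>0$.

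The crux, and the step where continuity of $\sigma$ is essential, is this last claim. I would consider the orbit closure $G'$ of $n\mapsto(u^{n},n\gamma_{1},\dots,n\gamma_{L})$ inside $b_{K}\times\mathbb T^{L}$; this is a closed subgroup on which the rotation is uniquely ergodic, so $Q_{\varepsilon,f}\cap W$ has density equal to the Haar measure in $G'$ of the open set $B\times(-\tau',\tau')^{L}$, which is positive as soon as this open set meets $G'$. By Pontryagin duality $G'$ is the annihilator of the characters trivial on the orbit, and a direct computation gives $G'\cap(b_{K}\times\{0\})=\{(w,0):\chi_{\theta}(w)=1\ \forall\,\theta\in\Lambda\}$, where $\Lambda=\langle K\rangle\cap\langle\gamma_{1},\dots,\gamma_{L}\rangle$ and $\chi_{\theta}(w)=\prod_{x}w_{x}^{k_{x}}$ for $\theta=\sum_{x}k_{x}x$. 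Since $\langle\gamma_{1},\dots,\gamma_{L}\rangle$ is finitely generated, so is $\Lambda$; its finitely many generators are finite integer combinations of points of $K$, so the constraints $\chi_{\theta}(w)=1$ involve only a finite coordinate set $K_{0}\subseteq K$. Continuity of $\sigma$ forces $\sigma(K_{0})=0$, whence the point $w$ with $w_{x}=1$ for $x\in K_{0}$ and $w_{x}=f(x)$ otherwise lies in $G'\cap(b_{K}\times\{0\})$ and satisfies $d_{\sigma}(w,v_{f})=\int_{K_{0}}|1-f|\,d\sigma=0<\varepsilon$; that is, $(w,0)\in\bigl(B\times(-\tau',\tau')^{L}\bigr)\cap G'$.

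This furnishes the required positive density and completes the argument. I expect the main obstacle to be organizing precisely this final step—identifying $b_{K}$ with the full product, carrying out the dual computation of $G'$, and isolating the finite ($\sigma$-null) coordinate set $K_{0}$ so that the target ball around $v_{f}$ is reachable inside $G'\cap(b_{K}\times\{0\})$. The decomposition into Kronecker and weakly mixing parts, the Bohr-set lower bound, and the Wiener estimate are all standard, and it is exactly the atomlessness of $\sigma$ that defeats the single-frequency counterexamples which would otherwise obstruct recurrence of the translates.
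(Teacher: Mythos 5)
Your reduction of all translates to the single statement about $Q_{\varepsilon,fe_{m}}$ is correct (and is the same normalization the paper uses), and the ingredients you call standard really are: the splitting $1_{D}=f_{Z}+g$ along the Kronecker factor, the Bohr-set lower bound for the eigenfunction part, and the Wiener-lemma estimate for $g$. The proof collapses, however, at exactly the step you call the crux. The formula $d_{\sigma}(w,w')=\int_{K}|w_{x}-w'_{x}|\,d\sigma(x)$ is \emph{not} a continuous pseudometric on $b_{K}=\prod_{x\in K}\mathcal{S}^{1}$ with the product topology: it is not even well defined on the (typically non-measurable) functions $w$ that make up $b_{K}$, and where it is defined it is not product-continuous, since a basic open set constrains only finitely many coordinates, a set of $\sigma$-measure zero because $\sigma$ is continuous. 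Consequently the ball $B=\{w: d_{\sigma}(w,v_{f})<\varepsilon\}$ has empty interior, even relative to $G'$: given any point of $B\cap G'$ and any basic neighborhood constraining coordinates in a finite set $F$, redefining $w_{x}=-f(x)$ off $F$ stays in the neighborhood and in $G'$ (your own duality computation shows the constraints defining $G'$ involve only the finite set $K_{0}$, which we may assume lies in $F$), yet moves the $d_{\sigma}$-distance to $v_{f}$ up to $2\sigma(K\setminus F)=2$. So your correct and rather elegant construction of a point of $G'\cap(B\times\{0\})$ proves only that $B\times(-\tau',\tau')^{L}$ \emph{meets} $G'$; since that set is not open in $G'$, nonemptiness gives no lower bound on its Haar measure, and unique ergodicity gives no lower bound on visit frequencies to it.

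This gap cannot be patched within your strategy, because the crux claim is false: $Q_{\varepsilon,f}\cap W$ does not have positive density, as $Q_{\varepsilon,f}$ itself has density zero (indeed upper Banach density zero) once $\varepsilon\leq 1/2$. Indeed, if $\int|e_{n}-f|\,d\sigma<\varepsilon$ then, multiplying by $\bar{f}$ and integrating, $|\widehat{\bar{f}\sigma}(n)-1|<\varepsilon$, so $Q_{\varepsilon,f}\subseteq\{n:|\widehat{\bar{f}\sigma}(n)|>1/2\}$; the complex measure $\bar{f}\sigma$ is continuous, so Wiener's lemma makes the latter set density zero along every F{\o}lner sequence. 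This is why the plan ``find $n\in Q_{\varepsilon,f}\cap W$ avoiding the density-zero exceptional set coming from $g$'' cannot work: $Q_{\varepsilon,f}\cap W$ is itself density zero, so nothing prevents it from sitting entirely inside the exceptional set. This zero-density feature is the whole difficulty of Proposition \ref{propSimpleCase}, and the paper's proof is built around it: by the correspondence principle (Lemma \ref{lemForwardAndReverseCorrespondence}) it suffices to show $Q_{\varepsilon,f}$ meets $B-B$ for every $B\subseteq\mathbb{Z}$ with $d^{*}(B)>0$, and this is achieved by discretizing $K$ into a partition of $r$ cells of equal $\sigma$-measure and invoking measure concentration in the finite group $\Lambda_{k}^{r}$ (Lemma \ref{lemKleitmanRecurrence}): a subset of density $\delta$ there has difference set meeting \emph{every} Hamming ball of radius $\varepsilon r$, and the Kronecker property together with Lemma \ref{lemUBDchar} transfers a positive-density portion of (a translate of) $B$ into such a subset. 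That finite-group concentration phenomenon is what substitutes for the positive-density statement your argument needs and which is unavailable in $\mathbb{Z}$.
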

Proposition \ref{propSimpleCase} is proved in Section \ref{secProofOfMain}.

 Proposition \ref{propSimpleCase} strengthens Theorem 2.2 of \cite{Katznelson}, which says\footnote{Theorem 2.2 of \cite{Katznelson} has a slightly weaker statement than that given here, but the proof is easily adapted to prove the stronger statement.} that there is a continuous probability measure $\sigma$ on $\mathbb T$ such that for each $\varepsilon>0$, $\{n: |\hat{\sigma}(n)|> 1-\varepsilon\}$ is dense in the Bohr topology of $\mathbb{Z}$.  The proposition also improves a special case of the main result of \cite{Saeki}, which says (in the special case) that if $\sigma$ is a continuous probability measure supported on a Kronecker set and $U\subseteq \mathbb C$ is an open subset of the closed unit disk, then $\{n: \hat{\sigma}(n) \in U\}$ is dense in the Bohr topology.   Proposition \ref{prop2Topologies} in the sequel explains why Proposition \ref{propSimpleCase} implies the earlier results. It is currently unknown whether every set dense in the Bohr topology is also a set of recurrence -- this is equivalent to Question \ref{questionBohrDifference}.  If so, Proposition \ref{propSimpleCase} merely recovers Theorem 2.2 of \cite{Katznelson} and the aforementioned special case of \cite{Saeki}.

\subsection{Recurrence is witnessed by finite approximations}  We will prove Theorem \ref{thmMainFull} by piecing together finite subsets of the sets $Q_{f,\varepsilon}$ in Proposition \ref{propSimpleCase}; for this we need Lemma \ref{lemRecurrenceIsSelective}.

 We say that $S\subseteq \mathbb Z$ is a \emph{set of $\delta$-recurrence} if for every measure preserving system $(X,\mu,T)$ and every $D\subseteq X$ such that $\mu(D)>\delta$, there exists $n\in S$ such that $\mu(D\cap T^{n}D)>0$.

\begin{lemma}[\cite{ForrestThesis}, Theorem 2.1]\label{lemRecurrenceIsFinite}
  A set $S\subseteq \mathbb Z$ is a set of recurrence if and only if for all $\delta>0$, there is a finite subset $S_{\delta}$ of $S$ such that $S_{\delta}$ is a set of $\delta$-recurrence.
\end{lemma}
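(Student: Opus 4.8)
The plan is to prove the two implications separately; the reverse implication is immediate, while the forward implication rests on a weak$^{*}$ compactness argument. For the reverse direction, assume that for every $\delta>0$ there is a finite set $S_{\delta}\subseteq S$ that is a set of $\delta$-recurrence. Given any system $(X,\mu,T)$ and a set $D$ with $\mu(D)>0$, I would apply the hypothesis with $\delta=\mu(D)/2$. Since $\mu(D)>\delta$, the set $S_{\delta}$ produces an $n\in S_{\delta}\subseteq S$ with $\mu(D\cap T^{n}D)>0$, so $S$ is a set of recurrence.

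For the forward direction I would argue by contradiction. Suppose $S$ is a set of recurrence but, for some $\delta>0$, no finite subset of $S$ is a set of $\delta$-recurrence (we may assume $S$ is infinite, since otherwise $S_{\delta}=S$ works). Enumerate $S=\{s_{1},s_{2},\dots\}$ and put $F_{k}=\{s_{1},\dots,s_{k}\}$. For each $k$, the failure of $\delta$-recurrence for $F_{k}$ supplies a system $(X_{k},\mu_{k},T_{k})$ and a set $D_{k}$ with $\mu_{k}(D_{k})>\delta$ and $\mu_{k}(D_{k}\cap T_{k}^{s_{j}}D_{k})=0$ for all $j\le k$. I would transport all of these witnesses into one compact space by symbolic coding: let $\Omega=\{0,1\}^{\mathbb Z}$ with the shift $\sigma$, define $\phi_{k}(x)=(1_{D_{k}}(T_{k}^{n}x))_{n\in\mathbb Z}$, and let $\nu_{k}$ be the pushforward of $\mu_{k}$ under $\phi_{k}$, which is a shift-invariant Borel probability measure on $\Omega$. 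Writing $C=\{\omega:\omega_{0}=1\}$ and $C_{n}=\{\omega:\omega_{0}=\omega_{-n}=1\}$, the coding gives $\nu_{k}(C)=\mu_{k}(D_{k})>\delta$ and $\nu_{k}(C_{s_{j}})=\mu_{k}(D_{k}\cap T_{k}^{s_{j}}D_{k})=0$ for every $j\le k$.

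Now I would invoke weak$^{*}$ compactness: the space of shift-invariant Borel probability measures on $\Omega$ is compact and metrizable, so a subsequence of $(\nu_{k})$ converges weak$^{*}$ to some shift-invariant $\nu$. The sets $C$ and $C_{n}$ are clopen, so $\nu\mapsto\nu(C)$ and $\nu\mapsto\nu(C_{n})$ are continuous; hence $\nu(C)\ge\delta>0$, while for each fixed $n=s_{j}\in S$ we have $\nu(C_{s_{j}})=0$. Since $\nu(C_{n})=\nu(C\cap\sigma^{n}C)$, the invertible system $(\Omega,\nu,\sigma)$ together with the set $C$ satisfies $\nu(C)>0$ and $\nu(C\cap\sigma^{n}C)=0$ for every $n\in S$, contradicting the assumption that $S$ is a set of recurrence. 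The crux of the argument, and the only real obstacle, is exactly this passage from the finite witnesses to a single system valid for all of $S$: symbolic coding is what makes it go through, because it realizes each correlation $\mu(D\cap T^{n}D)$ as the measure of a clopen cylinder and hence as a weak$^{*}$ continuous functional. One harmless subtlety is that the strict bound $\mu_{k}(D_{k})>\delta$ relaxes to $\nu(C)\ge\delta$ in the limit, but since $\delta>0$ this still yields a set of positive measure, which is all that recurrence requires.
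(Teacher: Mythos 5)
The paper itself offers no proof of this lemma: it is quoted verbatim from Forrest's thesis (Theorem 2.1 there) and used as a black box, so there is no in-paper argument to compare yours against. Judged on its own, your proof is correct and complete. The reverse direction is the routine one, and your forward direction is the standard compactness argument for finitization statements of this kind: symbolic coding turns each finite family of witnesses to the failure of $\delta$-recurrence into a shift-invariant measure on $\{0,1\}^{\mathbb Z}$, the correlations $\mu_{k}(D_{k}\cap T_{k}^{n}D_{k})$ become measures of clopen cylinder sets and hence weak$^{*}$-continuous functionals, and a weak$^{*}$ limit point of the $\nu_{k}$ yields a single invertible system $(\Omega,\nu,\sigma)$ and a set $C$ with $\nu(C)\geq\delta>0$ but $\nu(C\cap\sigma^{n}C)=0$ for all $n\in S$, contradicting recurrence. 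You correctly identified the only delicate points: the passage from finitely many witness systems to one system (which the coding handles), and the relaxation of the strict inequality $\mu_{k}(D_{k})>\delta$ to $\nu(C)\geq\delta$ in the limit (harmless since $\delta>0$). One purely cosmetic repair: state the shift convention explicitly, say $(\sigma\omega)_{m}=\omega_{m+1}$, so that $\phi_{k}\circ T_{k}=\sigma\circ\phi_{k}$ and the identity $C\cap\sigma^{n}C=\{\omega:\omega_{0}=\omega_{-n}=1\}=C_{n}$ hold exactly as written; with the opposite convention the coding intertwines $T_{k}$ with $\sigma^{-1}$ instead, which still gives invariance of $\nu_{k}$ but would force sign changes elsewhere. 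Also note your parenthetical disposal of the finite case is right, since any set of recurrence is itself a set of $\delta$-recurrence for every $\delta>0$.
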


\begin{lemma}\label{lemRecurrenceIsSelective}
  Let $S_{1} \supseteq S_{2} \supseteq S_{3} \supseteq \dots$ be a descending chain of subsets of $\mathbb Z$ such that every translate of $S_{j}$ is a set of recurrence for each $j$.  Then there is a set $S$ such that $S\setminus S_{j}$ is finite for each $j$ and every translate of $S$ is a set of recurrence.
\end{lemma}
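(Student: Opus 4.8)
The plan is to reduce the statement to a countable diagonalization by invoking the finite-witness characterization of recurrence in Lemma \ref{lemRecurrenceIsFinite}. Recall from that lemma that $S+m$ is a set of recurrence if and only if, for every $\delta>0$, some finite subset of $S+m$ is a set of $\delta$-recurrence. Moreover, a set of $(1/k)$-recurrence is automatically a set of $\delta$-recurrence whenever $1/k\le\delta$, since the constraint $\mu(D)>\delta$ is then stronger than $\mu(D)>1/k$. Hence it suffices to arrange that for each $m\in\mathbb Z$ and each $k\in\mathbb N$, the translate $S+m$ contains a finite subset that is a set of $(1/k)$-recurrence.

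First I would fix a bijective enumeration $(m_{i},k_{i})_{i\in\mathbb N}$ of $\mathbb Z\times\mathbb N$, so that every pair $(m,k)$ is treated at exactly one stage. At stage $i$, I would use the hypothesis that $S_{i}+m_{i}$ is a set of recurrence: applying Lemma \ref{lemRecurrenceIsFinite} with $\delta=1/k_{i}$ yields a finite set $G_{i}\subseteq S_{i}+m_{i}$ that is a set of $(1/k_{i})$-recurrence. Setting $F_{i}:=G_{i}-m_{i}$ produces a finite subset $F_{i}\subseteq S_{i}$ for which $F_{i}+m_{i}$ is a set of $(1/k_{i})$-recurrence. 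I then define $S:=\bigcup_{i\in\mathbb N}F_{i}$.

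Two verifications remain. For the almost-containment $S\setminus S_{j}$ finite, fix $j$; since the chain is descending, $F_{i}\subseteq S_{i}\subseteq S_{j}$ for every $i\ge j$, so $S\setminus S_{j}\subseteq\bigcup_{i<j}F_{i}$ is a finite union of finite sets. For recurrence of translates, fix $m\in\mathbb Z$ and $\delta>0$, choose $k$ with $1/k\le\delta$, and locate the unique stage $i$ with $(m_{i},k_{i})=(m,k)$; then $F_{i}+m\subseteq S+m$ is a finite set of $(1/k)$-recurrence, hence of $\delta$-recurrence. As $\delta>0$ was arbitrary, Lemma \ref{lemRecurrenceIsFinite} shows $S+m$ is a set of recurrence, and as $m$ was arbitrary, every translate of $S$ is.

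The only genuine subtlety — and the reason the hypothesis is stated for \emph{every} translate of \emph{each} $S_{j}$ rather than for the $S_{j}$ alone — is the apparent tension between the two goals: almost-containment forces the witnessing sets $F_{i}$ to be drawn from the small sets $S_{i}$ deep in the chain, while recurrence for the translate by $m_{i}$ demands that these same small sets still supply finite $(1/k_{i})$-recurrence subsets of $S_{i}+m_{i}$. This tension dissolves precisely because every translate of every $S_{i}$ is assumed to be a set of recurrence, so even the deepest $S_{i}$ furnishes the required finite witness; no quantitative control over the rate at which the $F_{i}$ shrink is needed, which is what makes the diagonalization go through cleanly.
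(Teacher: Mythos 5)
Your proof is correct and follows essentially the same approach as the paper: both use Lemma \ref{lemRecurrenceIsFinite} to extract finite $\delta$-recurrence witnesses from the sets $S_{j}$ and take their union, the only difference being bookkeeping (the paper handles all translates $|m|\leq j$ at precision $\tfrac{1}{j}$ in stage $j$, while you enumerate the pairs $(m,k)$ one per stage). The verification details you supply are exactly the ones the paper leaves implicit.
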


\begin{proof}
For each $j\in \mathbb N$, let $S_{j}'\subseteq S_{j}$ be a finite set such that $S_{j}'+ m$ is a set of $\frac{1}{j}$-recurrence for every $|m|\leq j$; such finite sets exist by Lemma \ref{lemRecurrenceIsFinite}.    Then $S=\bigcup_{j \in \mathbb N} S_{j}'$ is the desired set.
\end{proof}

\begin{proof}[Proof of Theorem \ref{thmMainFull}]  Let $K\subseteq \mathbb T$ be a nonempty perfect Kronecker set, and let $(K_{m})_{m\in \mathbb Z}$ be a sequence of mutually disjoint nonempty perfect subsets of $K$.  Observe that each $K_{m}$ is a Kronecker set, for every subset of a Kronecker set is itself a Kronecker set.  For each $m$, let $\sigma_{m}$ be a continuous probability measure supported on $K_{m}$, and define $\sigma := \frac{1}{3}\sum_{m\in \mathbb Z}  2^{-|m|}\sigma_{j}$, so that $\sigma$ is a continuous probability measure on $K$. Let $f:K\to \mathcal S^{1}$ satisfy $f|_{K_{m}} = e_{m}$ for each $m$, and for each $j\in \mathbb N$ let
\[
  S_{j} =  \Bigl\{n\in \mathbb Z: \int |e_{n}-f|  \,d\sigma < \frac{1}{j}\Bigr\}.
\]
By Proposition \ref{propSimpleCase} (b), every translate of $S_{j}$ is a set of recurrence.   By Lemma \ref{lemRecurrenceIsSelective}, there is a set $S$ such that every translate of $S$ is a set of recurrence, and $S\setminus S_{j}$ is finite for every $n$.  Enumerating the elements of $S$ as $(s_{n})_{n\in \mathbb N}$, we have $e_{s_{n}} \to f$ in $L^{1}(\sigma)$, and it follows that $e_{s_{n}} \to f$ in $L^{1}(\sigma_{m})$ for every $m$.  Consequently, $e_{s_{n}} \to e_{m}$ in $L^{1}(\sigma_{m})$, by the definition of $f$, and we apply Lemma \ref{lemRigidityCriteria} to see that $S-m$ is a set of rigidity for every $m$. \end{proof}

\section{Proofs of Corollaries}\label{secProofsOfCorollaries}
Corollaries \ref{corNeverStrong} and \ref{corPopularRecurrence} will be proved with the aid of Lemma \ref{lemRigidNotStrong}.  Our approach to Corollary \ref{corPopularDifferences} requires some additional machinery in the form of the Jewett-Krieger theorem, which we review after the proofs of Corollaries \ref{corNeverStrong} and \ref{corPopularRecurrence}.

\begin{lemma}\label{lemRigidNotStrong}
  If $\mathbf X= (X,\mu,T)$ is a nontrivial weak mixing measure preserving system, $S$ is a set of rigidity for $\mathbf X$, and $m\in \mathbb Z\setminus \{0\}$, then $S+m$ is not a set of strong recurrence for $\mathbf X$.
\end{lemma}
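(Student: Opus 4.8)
The plan is to produce a single set $D$ with $\mu(D)>0$ whose behaviour witnesses the failure of strong recurrence for $S+m$ with respect to $\mathbf X$; recall that this requires finding one $D$ with $\mu(D)>0$ such that for every $c>0$ the set $\{n\in S+m:\mu(D\cap T^{n}D)>c\}$ is finite. The guiding observation is that rigidity of $S$ forces $T^{k}D$ to be close to $D$ for all but finitely many $k\in S$, so the recurrence of $D$ along $S+m$ is governed by $T^{m}D$ rather than by $D$ itself. Concretely, writing an element of $S+m$ as $n=k+m$ with $k\in S$, we have $T^{n}D=T^{m}(T^{k}D)$, and since $T^{m}$ is a measure-preserving bijection,
\[
  |\mu(D\cap T^{n}D)-\mu(D\cap T^{m}D)|\le \mu\bigl(T^{m}(T^{k}D)\triangle T^{m}D\bigr)=\mu(T^{k}D\triangle D).
\]
Thus if we can arrange $\mu(D\cap T^{m}D)=0$, then $\mu(D\cap T^{n}D)\le \mu(T^{k}D\triangle D)$ for every such $n$.

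The first step is therefore to find $D$ with $\mu(D)>0$ that is essentially disjoint from its $T^{m}$-image, i.e. $\mu(D\cap T^{m}D)=0$. Since $\mathbf X$ is nontrivial and weak mixing, it is ergodic with no atoms, so $T$ is aperiodic; because $m\neq 0$, the periodic points of $T^{m}$ are periodic points of $T$ and hence form a null set, so $T^{m}$ is aperiodic as well. Rokhlin's lemma applied to $T^{m}$, with tower height $2$, then yields a set $D$ of positive measure with $D$ and $T^{m}D$ disjoint up to a null set, which is exactly what is needed. (Only aperiodicity of $T^{m}$ is used here; the full strength of weak mixing is not required for this step, although it is what the hypotheses readily supply.)

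With such a $D$ fixed, the second step is immediate from the displayed inequality together with the hypothesis that $S$ is a set of rigidity for $\mathbf X$. For any $c>0$,
\[
  \{n\in S+m:\mu(D\cap T^{n}D)>c\}\subseteq m+\{k\in S:\mu(T^{k}D\triangle D)>c\},
\]
and the right-hand set is finite because rigidity of $S$ makes $\{k\in S:\mu(D\triangle T^{k}D)>c\}$ finite for every $c>0$. Hence for this $D$ no $c>0$ makes $\{n\in S+m:\mu(D\cap T^{n}D)>c\}$ infinite, so $S+m$ is not a set of strong recurrence for $\mathbf X$.

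The computation in the final step is routine, so the only real work lies in the first step: guaranteeing a positive-measure set disjoint from its $m$-th shift. I expect this to be the main (though standard) obstacle, and it is handled by checking that weak mixing plus nontriviality force aperiodicity of $T^{m}$ and then invoking Rokhlin's lemma.
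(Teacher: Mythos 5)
Your proof is correct and is essentially the paper's argument: fix $D$ with $\mu(D)>0$ and $\mu(D\cap T^{m}D)=0$, then use rigidity of $S$ to force $\mu(D\cap T^{k+m}D)\le \mu(D\triangle T^{k}D)\to 0$ along $S$, so no $c>0$ can witness strong recurrence of $S+m$ for this $D$. The only divergence is how you produce $D$: the paper invokes total ergodicity of the weak mixing system (for instance $D=E\setminus T^{m}E$ works for any $E$ with $0<\mu(E)<1$, since ergodicity of $T^{m}$ gives $\mu(E\triangle T^{m}E)>0$), whereas your route through atomlessness, aperiodicity of $T^{m}$, and Rokhlin's lemma is also valid but heavier, and tacitly assumes a standard (Lebesgue) probability space.
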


\begin{proof}
  Let $S$ be a set of rigidity for $\mathbf X$. Let $D\subseteq X$ have $\mu(D)>0$ and $D\cap T^{m}D = \varnothing$.  Such a $D$ exists since $\mathbf X$ is weak mixing, and therefore totally ergodic.  Enumerate the elements of $S$ as $\{s_{n}:n\in \mathbb N\}$, so that $\mu(E\triangle T^{s_{n}}E)\to 0$ as $n\to \infty$ for every measurable set $E\subseteq X$.  Then $\mu(T^{m}D \triangle T^{s_{n}+m}D)\to 0$, so $\mu(D\cap T^{s_{n}+m}D)\to \mu(D\cap T^{m}D)=0$ as $n\to \infty$.  This shows that $S+m$ is not a set of strong recurrence for $\mathbf X$.
\end{proof}

\begin{proof}[Proof of Corollary \ref{corNeverStrong}] Lemma \ref{lemRigidNotStrong} and Theorem \ref{thmMainFull} together imply Corollary \ref{corNeverStrong}. \end{proof}

\begin{proposition}\label{propRigidRecurrencePopular}
  If $\mathbf X$ is a nontrivial rigid-recurrence system, then for every set $D\subseteq X$ such that $D\cap TD = \varnothing$ and every $c>0$, $R_{c}(D)$ does not contain a set of the form $n_{0}+B-B$, where $d^{*}(B)>0$.
\end{proposition}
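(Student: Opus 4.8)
The plan is to argue by contradiction, exploiting the tension between recurrence of translates of $S$ and rigidity along $S$. Let $S$ be a set of rigidity for $\mathbf X$ all of whose translates are sets of recurrence, as furnished by the definition of a rigid-recurrence system, and enumerate $S$ as $(s_{n})_{n\in \mathbb N}$. Suppose, for contradiction, that there are $n_{0}\in \mathbb Z$ and $B\subseteq \mathbb Z$ with $d^{*}(B)>0$ such that $n_{0}+B-B\subseteq R_{c}(D)$. I will produce infinitely many $s\in S$ with $s+1\in R_{c}(D)$ and show this is incompatible with rigidity.

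First I would record the rigidity constraint. Since $(s_{n})_{n\in \mathbb N}$ is a rigidity sequence for $\mathbf X$, for every measurable $E$ we have $\mu(E\triangle T^{s_{n}}E)\to 0$; applying this to $E=D$ and using invariance of $\mu$ gives $\mu(TD\triangle T^{s_{n}+1}D)\to 0$, whence $\mu(D\cap T^{s_{n}+1}D)\to \mu(D\cap TD)=0$ because $D\cap TD=\varnothing$. Consequently, for the fixed $c>0$, the set $E_{c}:=\{s\in S: \mu(D\cap T^{s+1}D)>c\}$ is finite. This is exactly the mechanism of Lemma \ref{lemRigidNotStrong}, specialized to the shift $m=1$.

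Next I would produce the required elements of $S$. The point is that $s+1\in n_{0}+B-B$ is equivalent to $s-(n_{0}-1)\in B-B$, so it suffices to show that the translate $S':=S-(n_{0}-1)$, which is a set of recurrence, meets $B-B$ infinitely often. By the correspondence principle there is a measure preserving system $(Y,\nu,U)$ and a set $C$ with $\nu(C)>0$ such that $\{n:\nu(C\cap U^{n}C)>0\}\subseteq B-B$; since $S'$ is a set of recurrence it contains some such $n$, so $S'\cap (B-B)\neq \varnothing$. To upgrade this to an infinite intersection I would invoke the stability of sets of recurrence under deletion of finitely many elements: if $S'\cap (B-B)$ were finite, then $S'$ with this finite set removed would still be a set of recurrence yet disjoint from $B-B$, contradicting the previous sentence. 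Hence $S'\cap (B-B)$, and therefore $\{s\in S: s+1\in n_{0}+B-B\}$, is infinite.

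Finally, each such $s$ satisfies $s+1\in n_{0}+B-B\subseteq R_{c}(D)$, i.e. $\mu(D\cap T^{s+1}D)>c$, so $s\in E_{c}$; thus $E_{c}$ is infinite, contradicting its finiteness in the second paragraph. The hard part will be the infinitude established in the third paragraph: a single nonempty intersection $S'\cap(B-B)$ is immediate from the correspondence principle, but converting it into infinitely many witnesses requires the deletion-stability of sets of recurrence (developed among the lemmas of Sections \ref{subsecSetsOfRecurrence}--\ref{secCorrespondence}). This infinitude is precisely what makes rigidity along \emph{all} of $S$ genuinely contradicted, rather than merely failing on a finite initial segment.
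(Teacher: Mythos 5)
Your overall strategy is the paper's: rigidity along $S$ forces $\{s\in S:\mu(D\cap T^{s+1}D)>c\}$ to be finite (your second paragraph is exactly the mechanism of Lemma \ref{lemRigidNotStrong} with $m=1$, and it is correct), while recurrence of the translates of $S$ forces $S\cap(n_{0}-1+B-B)$ to be infinite, giving a contradiction. The problem is how you obtain the infinitude.

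You invoke ``the stability of sets of recurrence under deletion of finitely many elements.'' As a general principle this is \emph{false}, and the paper warns about precisely this point in the remark following Lemma \ref{lemShiftInfinite}: finite sets can be sets of recurrence (the singleton $\{0\}$ is one), so deleting finitely many elements can destroy the recurrence property --- e.g.\ $\{0\}\cup L$ with $L$ lacunary and non-recurrent is a set of recurrence whose deletion of $\{0\}$ is not. No lemma of that form appears in Sections \ref{subsecSetsOfRecurrence}--\ref{secCorrespondence}. What \emph{is} true is the restricted statement you actually need: if every translate of $S'$ is a set of recurrence, then $S'\setminus F$ is still a set of recurrence for every finite $F$. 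But this restricted statement is not a cheap formal fact; via the correspondence lemmas it is \emph{equivalent} to the conclusion you are trying to extract from it (that $S'$ meets every difference set $B-B$, $d^{*}(B)>0$, infinitely often), which is exactly Lemma \ref{lemShiftInfinite}. So your third paragraph is either resting on a false claim or is circular. The content of Lemma \ref{lemShiftInfinite} is genuinely nontrivial: its proof takes a product of the correspondence-principle system with a weak mixing system, chooses a set $E$ with $E\cap R^{m-n_{0}}E=\varnothing$ for all $m$ in the finite exceptional set, and uses ergodicity of the product together with Lemma \ref{lemExpanding} to force recurrence at a time outside that finite set. Nothing in your proposal supplies this argument.

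The repair is immediate: replace your correspondence-plus-deletion detour with a direct citation of Lemma \ref{lemShiftInfinite} (applied to $A=B$ and the shift $n_{0}-1$, or, as the paper phrases it, to the set $S+1$ all of whose translates are sets of recurrence). With that substitution your proof becomes the paper's proof verbatim.
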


\begin{proof} Let $S\subseteq \mathbb Z$ be a set of rigidity for $\mathbf X$ such that every translate of $S$ is a set of recurrence. Assume, to get a contradiction, that there is a set $D\subseteq X$ with $\mu(D)>0$ such that $D\cap TD= \varnothing$,  and for some $c>0$, the set $R_{c}(D)=\{n: \mu(D\cap T^{n}D)>c\}$ contains a set of the form $n_{0}+B-B$, where $d^{*}(B)>0$.

 Every translate of $S$ is a set of recurrence, so every translate of $S+1$ is a set of recurrence. Lemma \ref{lemShiftInfinite} implies $(S+1)\cap (n_{0}+B-B)$ is infinite, and consequently $(S+1)\cap R_{c}(D)$ is infinite.  On the other hand, if we enumerate $S$ as $(s_{n})_{n\in \mathbb N}$ we have $\lim_{n\to \infty} \mu(D\cap T^{s_{n}+1}D)=0$ by an argument similar to the proof of Lemma \ref{lemRigidNotStrong}.  The last limit implies the set $(S+1)\cap R_{c}(D)$ is finite, contradicting the earlier conclusion that it is infinite. \end{proof}

\begin{proof}[Proof of Corollary \ref{corPopularRecurrence}]  Theorem \ref{thmMainFull} implies the existence of nontrivial weak mixing rigid-recurrence systems, so
take $\mathbf{X}$ to be any weak mixing rigid-recurrence system and apply Proposition \ref{propRigidRecurrencePopular}.    \end{proof}

\subsection{Topological systems and unique ergodicity}  A \emph{topological system} $(X,T)$ is a compact metric space $X$ together with a self homeomorphism $T:X\to X$.  We say that $(X,T)$ is \emph{uniquely ergodic} if there is only one $T$-invariant Borel probability measure on $X$.  See any of \cite{CFS, EinsiedlerWard, Furstenberg, Glasner, Petersen, Walters} for background on topological systems.

In the proof of Corollary \ref{corPopularDifferences}, we will need minimal uniquely ergodic models of measure preserving systems. Such models are provided by the Jewett-Krieger theorem \cite{Jewett,Krieger}, which says that if $(X,\mu,T)$ is an ergodic measure preserving system, then there is a minimal uniquely ergodic topological system $(\tilde{X},\tilde{T})$ with invariant probability measure $\tilde{\mu}$ such that $(X,\mu,T)$ is isomorphic to $(\tilde{X},\tilde{\mu},\tilde{T})$.  We can take the space $\tilde{X}$ to be totally disconnected.

Below we list some well-known properties of uniquely ergodic topological systems.  Let $(X,T)$ be a uniquely ergodic topological system with invariant probability measure $\mu$ on a totally disconnected space $X$.  Assume $\mu$ is continuous, so that $\mu(\{x\})=0$ for all $x\in X$.

\begin{enumerate}
  \item[(U1)] If $K\subseteq X$ is a clopen set, then for every F{\o}lner sequence $(\Phi_{j})_{j\in \mathbb N}$, we have $\lim_{j\to \infty} \frac{1}{|\Phi_{j}|}\sum_{n\in \Phi_{j}} 1_{K}(T^{n}x) = \mu(K)$, for every $x\in X$.

      \item[(U2)]  If $K\subseteq X$, $n\in \mathbb Z$, and $x\in X$, let $A:=\{m: T^m x\in K\}$. Then $A\cap (A-n)= \{m: T^{m}x\in K\cap T^{-n}K\}$.  Consequently, if $K$ is clopen, then for every F{\o}lner sequence $\mathbf \Phi$, $d_{\mathbf \Phi}(A\cap (A-n)) = \mu(K\cap T^{-n}K)$.

      \item[(U3)]  For all $\varepsilon>0$, there is a clopen set $D\subseteq X$ having $\mu(D)>\frac{1}{2}-\varepsilon$ such that $D\cap TD = \varnothing$.
 \end{enumerate}
To prove statement (U3), take $N$ to be sufficiently large and construct a Rohlin tower $\{C, TC,\dots, T^{2N-1}C\}$ for $(X,\mu,T)$ where the $T^{i}C$ are mutually disjoint clopen sets such that $\mu(X \setminus \bigcup_{i=0}^{2N-1}T^{i} C) > 1-\varepsilon$.  Let $D=\bigcup_{i=0}^{N-1}T^{2i}C$. Then $\mu(C)\geq \frac{1-\varepsilon}{2N}$,  $\mu(D) \geq N\cdot \mu(C) \geq N \frac{1-\varepsilon}{2N}  = \frac{1-\varepsilon}{2}$, and $D\cap TD = \varnothing$.   Such a Rohlin tower may be constructed by following the proof of \cite[Lemma 2.45]{EinsiedlerWard}, starting with a clopen set $A$ in the proof.

We say a set $A\subseteq \mathbb Z$ has \emph{uniform density} $\alpha$ if $d_{\mathbf{\Phi}}(A)=\alpha$ for every F{\o}lner sequence $\mathbf{\Phi}$, and we write $d_{U}(A)=\alpha$.

The following proposition immediately implies Corollary \ref{corPopularDifferences}.
\begin{proposition}\label{propUniform}
  For all $\varepsilon>0$, there is a set $A\subseteq \mathbb Z$ having uniform density $\geq \frac{1}{2}-\varepsilon$, such that for all $c>0$, the set $P_{c}(A)$ does not contain a translate of a difference set $n_{0}+B-B$, where $d^{*}(B)>0$.
\end{proposition}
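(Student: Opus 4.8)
The plan is to realize the abstract conclusion of Corollary \ref{corPopularRecurrence} along a single orbit of a uniquely ergodic topological model, so that measure-theoretic popular recurrence is transferred verbatim into combinatorial popular differences. First I would invoke Proposition \ref{propRigidRecurrenceExists} to fix a nontrivial weak mixing rigid-recurrence system $\mathbf X = (X,\mu,T)$. Since $\mathbf X$ is nontrivial and weak mixing, $\mu$ is continuous: an atom would, by ergodicity, force the whole space to be a single finite periodic orbit, and a nontrivial finite rotation is never weak mixing. As $\mathbf X$ is ergodic, the Jewett--Krieger theorem supplies an isomorphic minimal uniquely ergodic model on a totally disconnected compact metric space, and I would replace $\mathbf X$ by this model, keeping the notation $(X,\mu,T)$. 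Weak mixing and rigidity are isomorphism invariants, and the recurrence of translates of the witnessing set $S$ is a statement about $S\subseteq \mathbb Z$ alone; hence the model is again a weak mixing rigid-recurrence system with continuous invariant measure, placing us exactly in the setting where (U1)--(U3) apply.

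Next, given $\varepsilon>0$, I would use (U3) to choose a clopen set $D\subseteq X$ with $\mu(D) > \tfrac12-\varepsilon$ and $D\cap TD = \varnothing$. Fixing any point $x\in X$, define
\[
  A := \{n\in \mathbb Z : T^{n}x \in D\}.
\]
Because $D$ is clopen, (U1) gives $d_{\mathbf\Phi}(A) = \mu(D)$ for every F{\o}lner sequence $\mathbf\Phi$, so $A$ has uniform density $\mu(D) \geq \tfrac12-\varepsilon$. Likewise, for each $n$, (U2) identifies $A\cap(A-n)$ with $\{m : T^{m}x\in D\cap T^{-n}D\}$ and yields $d_{\mathbf\Phi}(A\cap(A-n)) = \mu(D\cap T^{-n}D)$ for every $\mathbf\Phi$. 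Taking the supremum over all F{\o}lner sequences gives $d^{*}(A\cap(A-n)) = \mu(D\cap T^{-n}D) = \mu(D\cap T^{n}D)$, the last equality by $T$-invariance of $\mu$. Consequently, for every $c>0$ the two sets coincide exactly: $P_{c}(A) = R_{c}(D)$.

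Finally, since $\mathbf X$ is a rigid-recurrence system and $D$ satisfies $\mu(D)>0$ and $D\cap TD=\varnothing$, Proposition \ref{propRigidRecurrencePopular} (equivalently Corollary \ref{corPopularRecurrence}) shows that $R_{c}(D)$ contains no set of the form $n_{0}+B-B$ with $d^{*}(B)>0$. Because $P_{c}(A) = R_{c}(D)$, the same conclusion holds for $P_{c}(A)$, and since $A$ has uniform density $\geq\tfrac12-\varepsilon$, this proves the proposition.

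I expect the only genuinely delicate point to be the passage from the abstract measure preserving system to its uniquely ergodic topological model and the verification that every needed hypothesis survives: that the invariant measure remains continuous, that weak mixing and the rigid-recurrence property are retained, and that a single orbit $\{T^{n}x\}$ faithfully encodes the recurrence statistics of $D$ through (U1) and (U2). Once the model is secured, the identity $P_{c}(A) = R_{c}(D)$ is routine and the result follows immediately from the corollary.
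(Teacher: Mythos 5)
Your proof is correct and takes essentially the same route as the paper's: pass to a Jewett--Krieger uniquely ergodic model, use (U3) to get a clopen $D$ with $D\cap TD=\varnothing$, let $A$ be the return-time set of a point, and use (U1)--(U2) to identify $P_{c}(A)$ with $R_{c}(D)$ before invoking Corollary \ref{corPopularRecurrence}. Your additional verifications --- continuity of $\mu$, isomorphism invariance of weak mixing and the rigid-recurrence property, and applying Proposition \ref{propRigidRecurrencePopular} directly to the model --- are sound and merely make explicit points the paper leaves implicit.
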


\begin{proof}
  Let $\mathbf X = (X,\mu,T)$ be a weak mixing system as in the conclusion of Corollary \ref{corPopularRecurrence}.  By the Jewett-Krieger theorem, there is a minimal uniquely ergodic topological dynamical system $\tilde{\mathbf{X}}=(\tilde{X},\tilde{T})$ with invariant probability measure $\tilde{\mu}$ such that $(\tilde{X},\tilde{\mu},\tilde{T})$ is isomorphic to $(X,\mu,T)$, and $\tilde{X}$ is totally disconnected.

Fix $\varepsilon>0$,  and  apply Property (U3) to find a clopen $D\subseteq \tilde{X}$ such that $D\cap \tilde{T}D = \varnothing$ and $\mu(D)>\frac{1}{2}-\varepsilon$.  Let $x\in \tilde{X}$ and $A = \{m:\tilde{T}^{m}x \in D\}$.  Then property (U1) implies $d_{U}(A) = \mu(D)>\frac{1}{2}-\varepsilon$. Property (U2) implies $\mu(D\cap \tilde{T}^{-n}D) = d_{U}(A\cap (A-n))=d^{*}(A\cap (A-n))$ for all $n\in \mathbb N$.  The conclusion now follows from Corollary \ref{corPopularRecurrence}.\end{proof}

\section{The Bohr topology and recurrence}\label{secBohr}
The \emph{Bohr topology} on $\mathbb Z$ is the weakest topology on $\mathbb Z$ such that every homomorphism $\chi:\mathbb Z\to \mathbb T$ is continuous.  See Chapter 5 of \cite[Part II]{RuzsaBook} for more on the Bohr topology and difference sets.

\begin{remark}\label{remSubbase}
  A subbase for the Bohr topology consists of sets of the form $\mathcal B(\alpha,U):=\{n \in \mathbb Z: n\alpha \in U\}$, where $U\subseteq \mathbb T$ is open and $\alpha \in \mathbb T$.  In other words, the Bohr topology consists of arbitrary unions of finite intersections of sets of the form $\mathcal B(\alpha,U)$.
\end{remark}

\begin{proposition}\label{prop2Topologies}
  If every translate of $S\subseteq \mathbb Z$ is a set of recurrence, then $S$ is dense in the Bohr topology on $\mathbb Z$.
\end{proposition}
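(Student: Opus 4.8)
The plan is to show directly that $S$ meets every nonempty basic open set in the Bohr topology. By Remark \ref{remSubbase} such a set has the form $V = \bigcap_{j=1}^{k} \mathcal{B}(\alpha_j, U_j)$ with each $U_j \subseteq \mathbb{T}$ open and each $\alpha_j \in \mathbb{T}$. Fixing some $m \in V$ (so that $m\alpha_j \in U_j$ for every $j$) and choosing $\varepsilon > 0$ small enough that the open $\varepsilon$-ball about $m\alpha_j$ lies in $U_j$ for each $j$, it suffices to produce an $s \in S$ with $\|s\alpha_j - m\alpha_j\| < \varepsilon$ for all $j$, where $\|\cdot\|$ denotes the distance to the nearest integer. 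Equivalently, writing $n = s - m$, I want $n \in S - m$ with $\|n\alpha_j\| < \varepsilon$ for every $j$.

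To locate such an $n$, I would invoke the hypothesis via the translate $S - m$, which is again a set of recurrence. Consider the measure preserving system $(\mathbb{T}^k, \mu, T)$, where $\mu$ is Haar measure and $T$ is the rotation $Tx = x + (\alpha_1, \dots, \alpha_k)$; this $T$ is invertible and preserves $\mu$. Let $D := \{x \in \mathbb{T}^k : \|x_j\| < \varepsilon/2 \text{ for all } j\}$, an open box about the origin with $\mu(D) > 0$. Since $S - m$ is a set of recurrence, there is an $n \in S - m$ with $\mu(D \cap T^n D) > 0$.

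The remaining step is to read off the Bohr condition from the overlap $\mu(D \cap T^n D) > 0$. Writing $\alpha = (\alpha_1, \dots, \alpha_k)$, we have $T^n D = D + n\alpha$, so positive-measure overlap forces $n\alpha \in D - D$; and by the triangle inequality for the circle metric, each coordinate of any element of $D - D$ has distance less than $\varepsilon$ to the nearest integer, whence $\|n\alpha_j\| < \varepsilon$ for all $j$. Then $s := n + m \in S$ satisfies $\|s\alpha_j - m\alpha_j\| < \varepsilon$, so $s \in \mathcal{B}(\alpha_j, U_j)$ for each $j$ and hence $s \in V$, as required.

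I expect the argument to be essentially routine once the rotation system is chosen correctly; the one point deserving emphasis is conceptual rather than computational. Recurrence of $S$ alone only yields points of $S$ near $0$ in the Bohr topology (the case $m = 0$), and it is precisely the hypothesis that \emph{every} translate of $S$ is a set of recurrence that upgrades this to the presence of points of $S$ near an arbitrary $m \in \mathbb{Z}$, giving density. The verification that positive-measure overlap of the box $D$ with its rotate forces $\|n\alpha_j\| < \varepsilon$ is a standard fact about difference sets of boxes on $\mathbb{T}^k$, which I would not belabor.
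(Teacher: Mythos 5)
Your proof is correct, and it takes a genuinely different route from the paper's. The paper's (outlined) argument is combinatorial: it reduces the claim to three standard facts about the Bohr topology --- every Bohr neighborhood has positive upper Banach density, every Bohr neighborhood of $0$ contains a difference set $V-V$ of a Bohr neighborhood $V$, and every Bohr neighborhood of a point $m$ contains $m$ plus a Bohr neighborhood of $0$ --- and then uses the fact that a set of recurrence must meet $V-V$ whenever $d^{*}(V)>0$, which rests on the Furstenberg correspondence principle (Lemma \ref{lemCorrespondence}). You instead apply the recurrence hypothesis directly to a concrete system: the rotation by $(\alpha_{1},\dots,\alpha_{k})$ on $\mathbb{T}^{k}$ with Haar measure, with the box $D$ of side $\varepsilon/2$ as the positive-measure set, so that $\mu(D\cap T^{n}D)>0$ forces $n\alpha\in D-D$ and hence $\|n\alpha_{j}\|<\varepsilon$ for each $j$. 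This is entirely self-contained --- no correspondence principle, no density facts about Bohr sets --- and makes transparent exactly where the hypothesis on \emph{every} translate is used (to reach an arbitrary $m$, not just $0$). What the paper's route buys in exchange is modularity: the three quoted facts about Bohr neighborhoods are reusable elsewhere in the paper (e.g., in Section \ref{secBohr}'s discussion of the hierarchy (R1)--(R5) and the equivalence with Question \ref{questionBohrDifference}), and given them the proof is one line. Both arguments are sound; yours is the more elementary and explicit of the two.
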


\begin{proof}  This is a standard fact, so we only outline the proof.  The following standard facts imply the conclusion:
\begin{enumerate}
  \item[(i)] Every  neighborhood in the Bohr topology has positive upper Banach density.

\item[(ii)]  Every Bohr neighborhood $U$ of the identity $0_{\mathbb Z}$ contains a difference set $V-V$ of a Bohr neighborhood $V$ of the identity.

\item[(iii)]  Every Bohr neighborhood of a point contains a translate of a Bohr neighborhood of $0_{\mathbb{Z}}$.
\end{enumerate}
\vspace{-.2in}

\end{proof}

Whether the converse of Proposition \ref{prop2Topologies} holds is an open problem, equivalent to Question \ref{questionBohrDifference}: if the answer to Question \ref{questionBohrDifference} is ``yes," then every set $S$ dense in the Bohr topology of $\mathbb{Z}$ is a set of recurrence, hence every translate of such a set is a set of recurrence.

\subsection{Rigidity and the Bohr topology}\label{secRigidity}
Answering Question 3.5 of \cite{BergEtAl}, Fayad and Kanigowski prove the following.

 \begin{theorem}[\cite{FaKa}, Theorem 2]\label{thmFaKa} There is a set of rigidity $S \subseteq \mathbb Z$ such that for all irrational $\alpha$, $\{n\alpha \textup{ mod 1}: n\in S\}$ is dense in $\mathbb R/\mathbb Z$, and for all rational $\alpha$, $\{n \alpha \textup{ mod 1}: n \in S\} = \{n \alpha \textup{ mod 1}: n \in \mathbb Z\}$.\end{theorem}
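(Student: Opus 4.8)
The plan is to obtain Theorem \ref{thmFaKa} as a direct consequence of Theorem \ref{thmMainFull} together with Proposition \ref{prop2Topologies}, rather than to reconstruct the original argument of Fayad and Kanigowski. Let $S\subseteq\mathbb Z$ be the set produced by Theorem \ref{thmMainFull}. By construction $S$ is a WM set of rigidity, hence in particular a set of rigidity for a nontrivial ergodic system, which settles the first requirement. Since every translate of $S$ is a set of recurrence, Proposition \ref{prop2Topologies} shows that $S$ is dense in the Bohr topology on $\mathbb Z$. It then remains to check that Bohr density implies the two displayed conditions on the orbits $\{n\alpha\bmod 1:n\in S\}$; this is routine, but I would carry it out as follows.

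For irrational $\alpha$, the full orbit $\{n\alpha\bmod 1:n\in\mathbb Z\}$ is dense in $\mathbb T$ by Weyl equidistribution. Given any nonempty open $V\subseteq\mathbb T$, density of the full orbit gives that the subbasic Bohr-open set $\mathcal B(\alpha,V)=\{n:n\alpha\in V\}$ of Remark \ref{remSubbase} is nonempty, so by Bohr density of $S$ there is $n\in S$ with $n\alpha\in V$. As $V$ was arbitrary, $\{n\alpha\bmod 1:n\in S\}$ is dense in $\mathbb T$. For rational $\alpha$, the orbit $H:=\{n\alpha\bmod1:n\in\mathbb Z\}$ is a finite subgroup of $\mathbb T$; for each $h\in H$ I would choose an open $V\ni h$ with $V\cap H=\{h\}$, so that $\mathcal B(\alpha,V)=\{n:n\alpha=h\}$ is nonempty, and Bohr density of $S$ then furnishes some $n\in S$ with $n\alpha=h$. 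Hence $\{n\alpha\bmod 1:n\in S\}=H$, as required.

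The genuine difficulty lies not in this deduction but in the construction of $S$ itself, that is, in Theorem \ref{thmMainFull} and the underlying Proposition \ref{propSimpleCase}: one must produce a single set that is simultaneously a rigidity sequence and dense in the Bohr topology, and these requirements pull in opposite directions, since rigidity forces $e_{n}$ to concentrate near $1$ in $L^{1}(\sigma)$ while Bohr density demands that $\{n\alpha\}$ spread out for every frequency $\alpha$. The device that reconciles them is the continuous measure $\sigma$ supported on a perfect Kronecker set: the Kronecker property lets the recurrence/rigidity sets $S_{\varepsilon}$ of Proposition \ref{propSimpleCase}(a) meet every Bohr neighborhood (via Proposition \ref{prop2Topologies}), while Lemma \ref{lemRigidityCriteria} converts the $L^{1}(\sigma)$-concentration into rigidity. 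If a more self-contained proof were wanted, I would bypass the recurrence-of-translates statement of Theorem \ref{thmMainFull} and instead assemble $S$ as an increasing union of finite pieces of the sets $S_{\varepsilon}$, invoking Proposition \ref{propSimpleCase}(a) with Proposition \ref{prop2Topologies} for Bohr density and Lemma \ref{lemRigidityCriteria} for rigidity; but the Kronecker-set input would remain the crux in either route.
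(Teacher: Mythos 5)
Your proposal is correct and follows exactly the route the paper itself takes: it derives Theorem \ref{thmFaKa} from Theorem \ref{thmMainFull} via Proposition \ref{prop2Topologies} and Remark \ref{remSubbase}, with your two orbit computations (irrational and rational $\alpha$) simply spelling out the routine deduction from Bohr density that the paper leaves implicit.
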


In light of Proposition \ref{prop2Topologies} and Remark \ref{remSubbase}, Theorem \ref{thmMainFull} implies Theorem \ref{thmFaKa} and the following strengthening.

\begin{theorem}\label{thmBohrDenseForZ2}
  There is a set of rigidity $S \subseteq \mathbb Z$ such that $S$ is dense in the Bohr topology of $\mathbb Z$.
\end{theorem}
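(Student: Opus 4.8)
The plan is to derive Theorem \ref{thmBohrDenseForZ2} as a direct consequence of Theorem \ref{thmMainFull} together with Proposition \ref{prop2Topologies}. Recall that Theorem \ref{thmMainFull} produces a set $S\subseteq\mathbb Z$ such that \emph{every} translate of $S$ is simultaneously a set of recurrence and a WM set of rigidity. First I would extract from this the two properties we need: that $S$ itself is a set of rigidity, and that every translate of $S$ is a set of recurrence.

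The second property is exactly the hypothesis of Proposition \ref{prop2Topologies}, which asserts that if every translate of a set is a set of recurrence, then that set is dense in the Bohr topology of $\mathbb Z$. So I would apply Proposition \ref{prop2Topologies} directly to the set $S$ furnished by Theorem \ref{thmMainFull} to conclude that $S$ is Bohr-dense. Meanwhile, ``WM set of rigidity'' is by definition a set of rigidity for some nontrivial weak mixing system, which in particular makes $S$ a set of rigidity in the sense of the definition in Section \ref{secPrecise}. Combining these two conclusions about the single set $S$ gives precisely the statement of Theorem \ref{thmBohrDenseForZ2}.

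The proof is therefore essentially a combination lemma, and there is no substantive obstacle: all the work has been done in establishing Theorem \ref{thmMainFull}, and Proposition \ref{prop2Topologies} packages the passage from ``every translate is a set of recurrence'' to ``Bohr density.'' The only point requiring a moment's care is confirming that the $S$ we use is literally the same set for both properties -- that is, that Theorem \ref{thmMainFull} gives one set enjoying both the recurrence-of-all-translates property and the rigidity property, rather than two different sets. Since Theorem \ref{thmMainFull} is stated for a single $S$, this is immediate, and no further argument is needed.

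\begin{proof}[Proof of Theorem \ref{thmBohrDenseForZ2}]
Let $S\subseteq\mathbb Z$ be the set provided by Theorem \ref{thmMainFull}, so that every translate of $S$ is both a set of recurrence and a WM set of rigidity. In particular, $S$ itself is a WM set of rigidity, hence a set of rigidity. Since every translate of $S$ is a set of recurrence, Proposition \ref{prop2Topologies} shows that $S$ is dense in the Bohr topology of $\mathbb Z$. Thus $S$ is a set of rigidity which is dense in the Bohr topology, as claimed.
\end{proof}
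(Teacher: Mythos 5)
Your proof is correct and is exactly the argument the paper intends: the remark preceding Theorem \ref{thmBohrDenseForZ2} states that it follows from Theorem \ref{thmMainFull} via Proposition \ref{prop2Topologies}, which is precisely your combination. Your added care in noting that a WM set of rigidity is in particular a set of rigidity (since a nontrivial weak mixing system is ergodic) is the only detail the paper leaves implicit, and you handle it correctly.
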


\subsection{Equidistribution}\label{secEqui}

A set $S\subseteq \mathbb Z$ is \emph{equidistributed} if there is a sequence of finite subsets $S_{j} \subseteq S$ such that for every $\alpha \in (0,1)$,
\begin{align}\label{eqEquidist}
  \lim_{j\to \infty} \frac{1}{|S_{j}|} \sum_{ n\in S_{j}}  \exp(2\pi i n \alpha) = 0.
\end{align}
It follows easily from the definition that $S$ is equidistributed iff every translate of $S$ is equidistributed.  Furthermore, standard arguments show that if $(S_{j})_{j\in \mathbb N}$ is a sequence of finite sets satisfying equation (\ref{eqEquidist}), then for every measure preserving system $(X,\mu,T)$ and every $D\subseteq X$, we have
\[
\lim_{j\to \infty} \frac{1}{|S_{j}|}\sum_{n\in S_{j}} \mu(D\cap T^{n}D) \geq \mu(D)^{2}.
\]
  Consequently, every translate of every equidistributed set is a set of optimal recurrence.

\subsection{A hierarchy of recurrence properties} Consider the following properties of a set of integers $S$.

\begin{enumerate}
  \item[(R1)] $S$ is equidistributed.

\smallskip

\item[(R2)] Every translate of $S$ is a set of optimal recurrence.

\smallskip

\item[(R3)] Every translate of $S$ is a set of strong recurrence.

\smallskip

\item[(R4)] Every translate of $S$ is a set of recurrence.

\smallskip

\item[(R5)] $S$ is dense in the Bohr topology of $\mathbb Z$.
 \end{enumerate}
 We have (R1) $\implies$ (R2)  $\implies$ (R3) $\implies$ (R4) $\implies$ (R5); the first implication is discussed above, and the last implication is Proposition \ref{prop2Topologies}.  Katznelson's and Saeki's examples (Theorem 2.2 of \cite{Katznelson} and the main theorem of \cite{Saeki}) show that (R5) does not imply (R1).

 Corollary \ref{corNeverStrong} shows that (R4) does not imply (R3), and  in fact (R4) does not imply that any translate of $S$ is a set of strong recurrence.  Corollary \ref{corNeverStrong} provides the first explicit proof that (R4) $\centernot\implies$ (R1), although it is plausible that the constructions of \cite{Forrest} and \cite{McCutcheon} produce sets satisfying (R4) and not (R3).

Does (R5) imply (R4)?  This is Question \ref{questionBohrDifference}. Does (R3) imply (R2)?  Does (R2) imply (R1)?  The answers are likely ``no,'' but we are unaware of a proof.

\section{Measure concentration}\label{secMC}  The main result of this section is Lemma \ref{lemKleitmanRecurrence}, which is the key lemma in proving Proposition \ref{propSimpleCase}.  Our approach is virtually identical to the proof of \cite[Lemma 2.4]{Wolf}, and we require some results from \cite{McDiarmid}. To state those, we introduce some terminology.

Let $k,r\in \mathbb N$, let $\Lambda_k:=\{\lambda \in \mathbb C: \lambda ^{k} = 1\}$, the group of $k^{\text{th}}$ roots of unity, where the group operation is ordinary multiplication. Consider the group $\Lambda_{k}^{r}:=(\Lambda_{k})^{r}$, the $r^{\text{th}}$ cartesian power of $\Lambda_{k}$.  We write the group operation on $\Lambda_{k}^{r}$ using multiplicative notation.  If $A, B\subseteq \Lambda_{k}^{r}$, we write $A\cdot B$ for the product set $\{ab: a\in A, b\in B\}$, and $A^{-1}A$ for the difference set\footnote{Perhaps we should write ``quotient set", but we abuse terminology instead.} $\{a^{-1}b : a,b\in A\}$.

Write an element of $\Lambda_{k}^{r}$ as $x = (x_{1},\dots,x_{r})$. Let $d_0$ denote one half of euclidean distance, so that $d(x,y):=\sum_{j=1}^{r} d_0(x_j,y_j)$ defines a translation invariant metric on $\Lambda_{k}^{r}$ with diameter $\leq r$.  For $t \geq 0$ and $x\in \Lambda_{k}^{r}$, let $U_{t}(x)$ denote the $d$-ball of radius $t$ around the element $x = (x_{1},\dots,x_{r})$, meaning  $U_{t}(x) = \{y \in \Lambda_{k}^{r}: \sum_{j=1}^{r}d_0(x_{j},y_{j}) \leq t\}$.  We refer to $U_{t}(x)$ as the \emph{Hamming ball} of radius $t$ around $x$.

The following lemma may be proved in exactly the same manner\footnote{We use the parameter $r$ where \cite{McDiarmid} uses the parameter $n$.  Take $ \gamma=1$ in the conclusion of Proposition 7.7 of \cite{McDiarmid} to get the expression $-t^{2}/2r$ in the exponent here.} as Proposition 7.7 of \cite{McDiarmid}.
\begin{lemma}\label{lemMeasureConcentration}
  Let $A\subseteq \Lambda_{k}^{r}$ be nonempty, $t\geq 0$ and let $\alpha = |A|/|\Lambda_{k}^{r}|$.  Then
  \[
    |A\cdot U_{t}(0)|/|\Lambda_{k}^{r}| \geq 1 - \alpha^{-1}\exp(-t^{2}/2r).
  \]
  Consequently, for every $x\in \Lambda_{k}^{r}$,
  \[
    |A\cdot U_{t}(x)|/|\Lambda_{k}^{r}| \geq 1 - \alpha^{-1}\exp(-t^{2}/2r).
  \]
\end{lemma}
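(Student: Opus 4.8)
The plan is to view the pair $(\Lambda_{k}^{r},P)$, where $P(E):=|E|/|\Lambda_{k}^{r}|$ is the uniform probability measure, as a product probability space: it is the $r$-fold product of the uniform measure on $\Lambda_{k}$, and $d(x,y)=\sum_{j=1}^{r}d_{0}(x_{j},y_{j})$ is a weighted Hamming metric on this product. The one fact that makes the machinery apply is that each coordinate contributes at most $1$ to $d$. Indeed, $d_{0}$ is half the Euclidean distance on $\mathcal S^{1}$, and any two points of $\mathcal S^{1}$ lie at Euclidean distance at most $2$, so $0\le d_{0}\le 1$ with $d_{0}(x_{j},y_{j})=0$ exactly when $x_{j}=y_{j}$. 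Hence the function $f(x):=d(x,A)=\min_{a\in A}d(x,a)$ is $1$-Lipschitz for $d$, and altering a single coordinate of $x$ changes $f(x)$ by at most $1$. This is precisely the bounded-differences hypothesis of Proposition 7.7 of \cite{McDiarmid} with all coordinate ranges equal to $1$ (the choice $\gamma=1$ singled out in the footnote), and it is what produces the $r$ in the exponent through $\sum_{j=1}^{r}1^{2}=r$.

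Next I would record the identity $A\cdot U_{t}(0)=\{x:f(x)\le t\}$, which follows from translation invariance of $d$: writing the identity as $0$, a point $y=au$ with $u\in U_{t}(0)$ satisfies $d(y,a)=d(u,0)\le t$, and conversely any $y$ with $d(y,A)\le t$ equals $a\cdot(a^{-1}y)$ with $a^{-1}y\in U_{t}(0)$. Thus $B:=(A\cdot U_{t}(0))^{c}=\{x:f(x)>t\}$ obeys $d(A,B)\ge t$, and the whole statement reduces to the separation estimate $P(A)\,P(B)\le\exp(-t^{2}/2r)$ whenever $d(A,B)\ge t$; applied to this $B$ it gives $P(B)\le\alpha^{-1}\exp(-t^{2}/2r)$, which is exactly the claimed lower bound on $|A\cdot U_{t}(0)|/|\Lambda_{k}^{r}|$.

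The separation estimate is the heart of the matter, and it is the content of Proposition 7.7 of \cite{McDiarmid}; I would obtain it by the standard martingale argument. Put $u=d(\cdot,A)$ and $v=d(\cdot,B)$. The triangle inequality gives $u(x)+v(x)\ge d(A,B)\ge t$ for all $x$, while $u$ vanishes on $A$ and $v$ on $B$, so for each $\lambda>0$ we have the pointwise bounds $1_{A}\le e^{-\lambda u}$ and $1_{B}\le e^{-\lambda v}$, whence $P(A)P(B)\le\mathbb E[e^{-\lambda u}]\,\mathbb E[e^{-\lambda v}]$. The Doob martingale of $u$ (and of $v$) along the coordinate filtration has conditional increments of range at most $1$ — this is exactly where $d_{0}\le 1$ is used — so Hoeffding's lemma applied coordinate by coordinate yields $\mathbb E[e^{-\lambda u}]\le e^{-\lambda\,\mathbb E u+\lambda^{2}r/8}$ and similarly for $v$. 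Multiplying, invoking $\mathbb E u+\mathbb E v=\mathbb E[u+v]\ge t$, and optimizing over $\lambda$ gives $P(A)P(B)\le\exp(-t^{2}/r)$, comfortably beating the required $\exp(-t^{2}/2r)$. I expect this concentration step to be the only real obstacle; since it is verbatim Proposition 7.7 of \cite{McDiarmid} with $\gamma=1$, I would verify the product structure and the coordinate-range bound and then simply cite that proposition rather than reproduce the martingale computation.

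Finally, the displayed consequence for arbitrary $x$ is immediate from translation invariance: $U_{t}(x)=x\cdot U_{t}(0)$, so $A\cdot U_{t}(x)=(A\cdot x)\cdot U_{t}(0)$, and $A\cdot x$ has the same cardinality as $A$ and hence the same density $\alpha$. Applying the first bound to $A\cdot x$ in place of $A$ delivers the stated inequality for every $x\in\Lambda_{k}^{r}$.
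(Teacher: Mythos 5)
Your proof is correct and takes essentially the same route as the paper: the paper establishes this lemma by invoking Proposition 7.7 of \cite{McDiarmid} (the bounded-differences concentration estimate for Hamming neighborhoods in product spaces), which is precisely the separation estimate you reduce to via the identity $A\cdot U_{t}(0)=\{x: d(x,A)\le t\}$ and then reprove with the Doob martingale and Hoeffding's lemma. Your explicit optimization even yields the stronger exponent $-t^{2}/r$, which of course implies the stated bound with $-t^{2}/2r$.
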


\begin{lemma}\label{lemKleitmanGeneral}
  Let $A\subseteq \Lambda_{k}^{r}$, $x\in \Lambda_{k}^{r}$, and $t>0$. If $(A^{-1}A)\cap U_{t}(x) = \varnothing$, then $|A|/|\Lambda_{k}^{r}| \leq \exp(-t^{2}/4r)$.
\end{lemma}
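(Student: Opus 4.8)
The plan is to derive the bound directly from the measure-concentration estimate in Lemma~\ref{lemMeasureConcentration}, by exhibiting a set that is disjoint from the thickening $A\cdot U_{t}(0)$ and whose cardinality equals $|A|$ exactly. The hypothesis $(A^{-1}A)\cap U_{t}(x)=\varnothing$ is precisely the right input for this: I expect it to force the translate $Ax:=\{ax:a\in A\}$ to be disjoint from $A\cdot U_{t}(0)$. Comparing an exact count of $|Ax|$ against a concentration lower bound for $|A\cdot U_{t}(0)|$ should then pin down $\alpha:=|A|/|\Lambda_{k}^{r}|$.

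First I would dispose of the trivial case $A=\varnothing$, where the inequality holds vacuously, and assume $A$ nonempty so that Lemma~\ref{lemMeasureConcentration} applies. Next I would establish the key disjointness claim $Ax\cap (A\cdot U_{t}(0))=\varnothing$. Suppose instead that $ax=bu$ for some $a,b\in A$ and $u\in U_{t}(0)$. Rearranging in the abelian group $\Lambda_{k}^{r}$ gives $a^{-1}b=xu^{-1}$. Since $d$ is translation invariant one has $x\cdot U_{t}(0)=U_{t}(x)$, and since $|\bar{\lambda}-1|=|\lambda-1|$ on $\Lambda_{k}$ the ball $U_{t}(0)$ is closed under inversion, so $u^{-1}\in U_{t}(0)$ and hence $a^{-1}b=xu^{-1}\in U_{t}(x)$. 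But $a^{-1}b\in A^{-1}A$, contradicting the hypothesis; this proves the claim.

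With the claim in hand the remaining steps are short arithmetic. Right multiplication by $x$ is a bijection of $\Lambda_{k}^{r}$, so $|Ax|=|A|=\alpha|\Lambda_{k}^{r}|$, and disjointness then yields $|A\cdot U_{t}(0)|/|\Lambda_{k}^{r}|\leq 1-\alpha$. On the other hand Lemma~\ref{lemMeasureConcentration} gives $|A\cdot U_{t}(0)|/|\Lambda_{k}^{r}|\geq 1-\alpha^{-1}\exp(-t^{2}/2r)$. Combining the two inequalities produces $\alpha\leq \alpha^{-1}\exp(-t^{2}/2r)$, that is $\alpha^{2}\leq \exp(-t^{2}/2r)$, and taking square roots yields the desired $\alpha\leq \exp(-t^{2}/4r)$.

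The main obstacle is really the choice made in the disjointness step: one must compare $Ax$, whose cardinality is \emph{exactly} $|A|$, against the full thickening $A\cdot U_{t}(0)$, rather than splitting $U_{t}(x)$ into two balls of radius $t/2$ and thickening $A$ on both sides. The latter, more symmetric, approach also produces a disjointness, but only yields the weaker estimate $\alpha\leq 2\exp(-t^{2}/8r)$. It is precisely the asymmetry between the exact count $|Ax|=|A|$ and the concentration lower bound for $|A\cdot U_{t}(0)|$ that produces the sharp exponent $t^{2}/4r$.
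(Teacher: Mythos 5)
Your proof is correct and takes essentially the same route as the paper's: both turn the hypothesis into a disjointness statement between $A$ (or a translate of it) and the $t$-thickening of $A$, play that against the concentration lower bound of Lemma~\ref{lemMeasureConcentration}, and finish with the identical arithmetic $\alpha^{2}\leq\exp(-t^{2}/2r)$. The only cosmetic difference is that you translate $A$ by $x$ and work with the ball $U_{t}(0)$ (which costs you the extra check that $U_{t}(0)$ is inversion-symmetric), whereas the paper observes $A^{-1}A\cap U_{t}(x)=\varnothing \iff A\cap\bigl(A\cdot U_{t}(x)\bigr)=\varnothing$ and invokes the ``for every $x$'' clause of Lemma~\ref{lemMeasureConcentration} directly.
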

\begin{proof}
  Assume $A^{-1}A\cap U_{t}(x)=\varnothing$.  Write $\alpha$ for $|A|/|\Lambda_{k}^{r}|$. Note that
  \[
  A^{-1}A\cap U_{t}(x) = \varnothing \iff A\cap (A\cdot U_{t}(x))=\varnothing,
  \] which implies $|A| + |A\cdot U_{t}(x)| \leq |\Lambda_{k}^{r}|$, or
  \begin{align}\label{eqDensities}
     \frac{|A|}{|\Lambda_{k}^{r}|} + \frac{|A\cdot U_{t}(x)|}{|\Lambda_{k}^{r}|} \leq 1.
  \end{align}
   Lemma \ref{lemMeasureConcentration} implies $|A\cdot U_{t}(x)|/|\Lambda_{k}^{r}| \geq 1 - \alpha^{-1} \exp(-t^{2}/2r)$; substituting into inequality (\ref{eqDensities}) we get
  \[
  \frac{|A|}{|\Lambda_{k}^{r}|} + 1-\alpha^{-1}\exp(-t^{2}/2r)\leq 1,
  \] meaning $\alpha + 1 - \alpha^{-1}\exp(-t^{2}/2r) \leq 1$.  Solving for $\alpha$, we find $\alpha^{2} \leq \exp(-t^{2}/2r)$, or $\alpha \leq \exp(-t^{2}/4r)$, which is the desired conclusion.
\end{proof}

The next lemma is a qualitative restatement of Lemma \ref{lemKleitmanGeneral}.

\begin{lemma}\label{lemKleitmanRecurrence}
For all $\varepsilon,\,\delta>0$ and all $k\in \mathbb N$,  there exists $N=N(\delta,\varepsilon,k)$ such that: if $r>N$ and $A\subseteq \Lambda_{k}^{r}$ has $|A| \geq \delta |\Lambda_{k}^{r}|$, then for all $x\in \Lambda_{k}^{r}$, there are $a_{1},a_{2}\in A$ such that $a_{1}^{-1}a_{2}\in U_{r\cdot \varepsilon}(x)$.
\end{lemma}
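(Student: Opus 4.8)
The plan is to read off this lemma as the contrapositive of Lemma \ref{lemKleitmanGeneral}, specialized to radius $t = r\varepsilon$. Observe that the desired conclusion—the existence of $a_1, a_2 \in A$ with $a_1^{-1}a_2 \in U_{r\varepsilon}(x)$—is exactly the assertion that $(A^{-1}A) \cap U_{r\varepsilon}(x) \neq \varnothing$, which is the negation of the hypothesis $(A^{-1}A) \cap U_t(x) = \varnothing$ appearing in Lemma \ref{lemKleitmanGeneral}. So the entire content of the present lemma is already encoded there; what remains is only to locate the threshold $N$ beyond which the density hypothesis $|A| \geq \delta|\Lambda_k^r|$ becomes incompatible with an empty intersection.

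Concretely, I would set $t = r\varepsilon$ and argue by contradiction: suppose $(A^{-1}A) \cap U_{r\varepsilon}(x) = \varnothing$. Lemma \ref{lemKleitmanGeneral} then yields $|A|/|\Lambda_k^r| \leq \exp(-(r\varepsilon)^2/4r) = \exp(-r\varepsilon^2/4)$. Combining this with the hypothesis $|A|/|\Lambda_k^r| \geq \delta$ forces $\delta \leq \exp(-r\varepsilon^2/4)$, which rearranges (by monotonicity of the exponential) to $r \leq 4\varepsilon^{-2}\log(1/\delta)$. Hence it suffices to take $N := 4\varepsilon^{-2}\log(1/\delta)$: for every $r > N$ we have $\exp(-r\varepsilon^2/4) < \delta$, so the supposition that the intersection is empty is untenable, and the required $a_1, a_2$ must exist. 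I would note in passing that this $N$ happens not to depend on $k$, which is harmless since the statement only requires \emph{some} admissible $N = N(\delta,\varepsilon,k)$.

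There is no genuine obstacle in this proof; the substance lies entirely in the measure-concentration estimate of Lemma \ref{lemKleitmanGeneral} (itself built on Lemma \ref{lemMeasureConcentration}), and the remaining step is the elementary inversion of $\delta \leq \exp(-r\varepsilon^2/4)$ to extract the threshold. The only point meriting a moment's care is aligning the inequalities so that $r > N$ truly contradicts the conclusion of Lemma \ref{lemKleitmanGeneral}, but this follows immediately once $t = r\varepsilon$ is substituted and the exponent simplified. Thus the lemma is a clean qualitative repackaging of the quantitative bound, converting ``small density whenever a Hamming ball is missed'' into ``every Hamming ball of radius $r\varepsilon$ is hit once $r$ is large.''
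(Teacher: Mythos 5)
Your proof is correct and is essentially identical to the paper's: the paper also applies Lemma \ref{lemKleitmanGeneral} with $t = r\varepsilon$ to get $|A|/|\Lambda_{k}^{r}| \leq \exp(-\varepsilon^{2}r/4)$ whenever $(A^{-1}A)\cap U_{r\varepsilon}(x) = \varnothing$, and concludes that this contradicts $|A| \geq \delta|\Lambda_{k}^{r}|$ once $r$ is large. The only difference is that you make the threshold $N = 4\varepsilon^{-2}\log(1/\delta)$ explicit (and correctly observe it is independent of $k$), whereas the paper leaves it as ``for $r$ sufficiently large.''
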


\begin{proof}
If $(A^{-1}A) \cap U_{r\cdot \varepsilon}(x) =\varnothing$,  Lemma \ref{lemKleitmanGeneral} implies
\[
|A|/|\Lambda_{k}^{r}| \leq \exp(-\varepsilon^{2}r/4).
\]  For $r$ sufficiently large, this implies $|A|/|\Lambda_{k}^{r}|<\delta$.
\end{proof}

\begin{remark}\label{remIsometry}  Consider $\Lambda_{k}^{r}$ as the set of functions $f:\{1,\dots, r\}\to \Lambda_{k}$. If $m$ is normalized counting measure on $\{1,\dots,r\}$ and $x\in \Lambda_{k}^{r}$, then $\{y\in \Lambda_{k}^{r}: \|y-x\|_{L^{1}(m)}<\varepsilon\}$ is the set of $y$ satisfying $\frac{1}{r}\sum_{j=1}^{r} |y_{j}-x_{j}| < \varepsilon$, or
\[  \sum_{j=1}^{r} \tfrac{1}{2} |y_{j} - x_{j}| <  r\cdot \varepsilon/2.\]
Hence, the $L^{1}(m)$-ball of radius $\varepsilon$ around $x$ is the Hamming ball $U_{r\cdot \varepsilon/2}(x)$.
\end{remark}

\section{Proof of Proposition \ref{propSimpleCase}.}\label{secProofOfMain}

Throughout this section, $K\subseteq \mathbb T$ will be a nonempty perfect Kronecker set and $\sigma$ will be a continuous probability measure supported on $K$.  In the proof of Proposition \ref{propSimpleCase}, we repeatedly use the following property of probability measures supported on Kronecker sets:

\begin{center}
  \begin{minipage}{0.9\textwidth}
     For all measurable functions $f:K\to \mathcal S^{1}$ and all $\varepsilon>0$, there is a character $e_{n}(x)= \exp(2\pi i n x)$ such that $\|f-e_{n}\|_{L^{1}(\sigma)}<\varepsilon$.
  \end{minipage}
\end{center}

\begin{proof}[Proof of Proposition \ref{propSimpleCase}]   Our main task is proving part (a).   Fix $\varepsilon>0$.   Recall that $S_{\varepsilon}:=\{n:\int |e_{n}-1| \,d\sigma < \varepsilon\}$, and we aim to prove that every translate of $S_{\varepsilon}$ is a set of recurrence. We write a translate $S_{\varepsilon}+m$ as
\begin{align*}
  S_{\varepsilon}+m &= \Bigl\{n+m: \int |e_{n}-1| \,d\sigma<\varepsilon\Bigr\}\\
  &= \Bigl\{n: \int |e_{n-m}-1| \,d\sigma<\varepsilon\Bigr\} \\
  &= \Bigl\{n: \int |e_{n}-e_{m}| \,d\sigma<\varepsilon\Bigr\};
\end{align*}
multiply the integrand $|e_{n-m}-1|$ by $|e_{m}|$ in the second line to get the third line.  For the remainder of the proof, we fix a set of integers $B$ having $d^{*}(B)>0$, we fix an $m\in \mathbb Z$, and we aim to prove that $(B-B)\cap (S_{\varepsilon}+m) \neq \varnothing$.  In other words: \begin{claim}\label{claimA1} There are $a,\,b\in B$ such that $\int |e_{a-b}(x) -  e_{m}(x)|\, d\sigma(x)<\varepsilon$. \end{claim}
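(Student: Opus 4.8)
The plan is to pass to a finite model in which the integral $\int|e_n-e_m|\,d\sigma$ becomes a Hamming distance on some $\Lambda_k^r$, and then to invoke the measure-concentration estimate of Lemma~\ref{lemKleitmanRecurrence} to force the difference set of (the image of) $B$ to meet a Hamming ball around the image of $m$. Concretely, write $\delta:=d^{*}(B)>0$ and fix a F{\o}lner window $\Phi$ with $|B\cap\Phi|\ge(\delta-o(1))|\Phi|$. I would choose points $x_{1},\dots,x_{r}\in K$ whose empirical measure $\frac1r\sum_{j}\delta_{x_{j}}$ approximates $\sigma$, taken moreover to be rationally independent, together with a large $k$, and then define $\Psi\colon\mathbb Z\to\Lambda_{k}^{r}$ by rounding each coordinate of $(e_{n}(x_{1}),\dots,e_{n}(x_{r}))$ to the nearest $k$-th root of unity. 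By Remark~\ref{remIsometry}, the quantity $\frac1r\sum_{j}|e_{n}(x_{j})-e_{m}(x_{j})|$ is, up to the rounding error of size $O(1/k)$, the Hamming distance from $\Psi(n)$ to $\Psi(m)$, so a small Hamming distance is exactly an empirical version of the inequality in the claim.

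The first substantive step is to see that the image $A:=\Psi(B\cap\Phi)$ is a positive proportion of $\Lambda_{k}^{r}$. Here I would use Weyl equidistribution: since the $x_{j}$ are rationally independent, $n\mapsto(nx_{1},\dots,nx_{r})$ equidistributes in $\mathbb T^{r}$, so for $\Phi$ long enough the fibres of $\Psi$ over $\Phi$ all have size $(1+o(1))|\Phi|/|\Lambda_{k}^{r}|$. Consequently $|A|\ge|B\cap\Phi|/\max_{g}|\Psi^{-1}(g)\cap\Phi|\ge(\delta-o(1))|\Lambda_{k}^{r}|$, so $A$ has density at least, say, $\delta/2$. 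Applying Lemma~\ref{lemKleitmanRecurrence} with parameter $\delta/2$ and target $x=\Psi(m)$ (taking $r$ large enough) then yields $a,b\in B\cap\Phi$ with $\Psi(a)^{-1}\Psi(b)\in U_{r\varepsilon'/2}(\Psi(m))$; since $\Psi(a)^{-1}\Psi(b)$ agrees with $\Psi(b-a)$ up to rounding, Remark~\ref{remIsometry} gives $\frac1r\sum_{j}|e_{b-a}(x_{j})-e_{m}(x_{j})|<\varepsilon'+O(1/k)$.

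The final step, and the one I expect to be the main obstacle, is the transfer from this empirical estimate back to the genuine integral: I must conclude $\int|e_{b-a}-e_{m}|\,d\sigma<\varepsilon$, which is exactly Claim~\ref{claimA1} for the $a,b$ just produced. For a single fixed frequency this is just the assertion that $\frac1r\sum_{j}e_{(b-a)-m}(x_{j})\approx\hat\sigma((b-a)-m)$, which holds with high probability for a random sample; the difficulty is that $b-a$ is only known to lie in $\Phi-\Phi$, a range forced to be large by the density and concentration steps, and a fixed finite sample cannot faithfully reproduce $L^{1}(\sigma)$-distances for every high frequency in that range simultaneously. I therefore expect the heart of the argument to be a careful coordination of the three parameters $r$, $k$, and $|\Phi|$ (for instance, combining the abundance of Hamming-ball hits guaranteed by measure concentration with a first-moment control on the $\sigma$-bad differences among them, or restricting the size of the produced difference) so that the empirical bound for the specific $b-a$ genuinely forces $\int|e_{b-a}-e_{m}|\,d\sigma<\varepsilon$. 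Once this transfer is secured, $b-a\in(B-B)\cap(S_{\varepsilon}+m)$ and the claim follows.
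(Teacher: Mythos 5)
You have correctly identified the main obstacle yourself, and it is fatal to the approach as proposed: the transfer from the empirical estimate $\frac1r\sum_j|e_{b-a}(x_j)-e_m(x_j)|<\varepsilon'$ back to $\int|e_{b-a}-e_m|\,d\sigma<\varepsilon$ cannot be made, for any fixed finite sample, in the parameter regime your argument forces. Here is a concrete reason. Since the $x_j$ are rationally independent, Weyl's theorem makes the set of empirical passers $\{n:\frac1r\sum_j|e_n(x_j)-e_m(x_j)|<\varepsilon'\}$ contain a Bohr set, hence have some positive density $\rho=\rho(r,k,\varepsilon')>0$ in every long window. On the other hand, since $\sigma$ is continuous, Wiener's lemma ($\frac{1}{2N+1}\sum_{|j|\le N}|\hat\sigma(j)|^2\to 0$) shows the genuine passers have density zero: $\int|e_n-e_m|\,d\sigma<\varepsilon$ forces $\mathrm{Re}\,\hat\sigma(n-m)>1-\varepsilon$ (because $|e_{n-m}-1|\ge\tfrac12|e_{n-m}-1|^2$), and only $o(N)$ integers in $[-N,N]$ can satisfy this. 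Your fibre-equidistribution step requires $|\Phi|\to\infty$ with $r$ and $k$ fixed (the sample must be chosen before $\Phi$, since the sample's diophantine properties dictate how long $\Phi$ must be), so the produced difference $b-a$ lies in a window $\Phi-\Phi$ in which empirical passers occupy proportion at least $\rho$ while genuine passers occupy proportion $o(1)$: almost every empirical passer fails the genuine test, and nothing in the argument distinguishes $b-a$ from these. The repairs you float do not escape this. A faithful-sampling/union-bound argument would need $r\gtrsim\log|\Phi|$, incompatible with choosing the sample before $\Phi$, and indeed impossible outright by the same density comparison; a first-moment count of the ``$\sigma$-bad'' differences would require a lower bound on the number of pairs of $B$ with difference in a sparse Bohr-type set, which is a popular-differences statement of exactly the kind this paper is constructed to refute.

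The paper's proof avoids the problem by building the finite-model dictionary in the opposite direction, and this is where the Kronecker property --- which your proposal never uses beyond rational independence of finitely many points --- is essential. Instead of mapping integers into $\Lambda_k^r$ by sampling and rounding, it partitions $K$ into $r$ cells of equal $\sigma$-measure, takes $H_{r,k}\cong\Lambda_k^r$ to be the group of cell-wise constant $\Lambda_k$-valued functions, and uses the Kronecker property to attach to each $\psi\in H_{r,k}$ an integer $n(\psi)$ with $\|e_{n(\psi)}-\psi\|_{L^1(\sigma)}<\varepsilon/3$, so that every element of the finite model carries a genuine $L^1(\sigma)$ bound from the outset. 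Lemma \ref{lemUBDchar} then produces a single shift $z$ with $|(B+z)\cap E_{r,k}|\ge d^*(B)|E_{r,k}|$, where $E_{r,k}=\{n(\psi):\psi\in H_{r,k}\}$; hence the set $\tilde B_z\subseteq H_{r,k}$ of elements $\varepsilon/3$-approximable by exponentials $e_{n+z}$ with $n\in B$ has density at least $d^*(B)$ in the group. Applying Lemma \ref{lemKleitmanRecurrence} in $H_{r,k}$ to $\tilde B_z$, with target the discretization $g$ of $e_m$, gives $\psi_1,\psi_2\in\tilde B_z$ with $\|\psi_1\overline{\psi_2}-g\|_{L^1(\sigma)}<\varepsilon/3$; unravelling, and using that $e_{n_1+z}\overline{e_{n_2+z}}=e_{n_1-n_2}$ so the unknown shift cancels, yields $a,b\in B$ satisfying the genuine integral bound. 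Note that no equidistribution of $B$ or of any orbit is needed anywhere --- only the pigeonhole fact in Lemma \ref{lemUBDchar} --- which is precisely what lets all estimates remain estimates in $L^1(\sigma)$ rather than empirical surrogates for it.
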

Since $B$ is an arbitrary set having $d^{*}(B)>0$,  Proposition \ref{propSimpleCase} follows from Claim \ref{claimA1} (by way of Lemma \ref{lemForwardAndReverseCorrespondence}).

We will derive Claim \ref{claimA1} from Lemma \ref{lemKleitmanRecurrence}.  We continue to denote by $\Lambda_{k}$ the multiplicative group of $k^{\text{th}}$ roots of unity.  We first choose $k$ to be sufficiently large that there is a function $g:K\to \Lambda_{k}$ such that
  \begin{equation}\label{eqApproxByg}
    \|e_{m}-g\|_{L^{1}(\sigma)}<\varepsilon/3.
  \end{equation} We choose $r>N(d^{*}(B),\varepsilon/6,k)$ in Lemma \ref{lemKleitmanRecurrence}. We also choose $r$ large enough that there is a partition $\mathcal P = (P_{1},\dots,P_{r})$ of $K$ into $r$ sets of equal $\sigma$-measure such that there is a $g:K\to \Lambda_{k}$ satisfying inequality (\ref{eqApproxByg}) which is constant on elements of $\mathcal P$.  For the remainder of the proof, we fix such $g$ and $\mathcal P$.

To prove Claim \ref{claimA1} it is enough to approximate $g$ by $e_{a-b}$ where $a,b\in B$: we will show that there are $a$, $b\in B$ such that $\|e_{a-b} - g\|_{L^{1}(\sigma)}<2\varepsilon/3$.  Rewriting $e_{a-b}$ as $e_{a}\overline{e_{b}}$, we will prove:

\begin{claim}\label{claimA2}
  There are $a, b\in B$ such that $\|e_{a}\overline{e_{b}} - g\|_{L^{1}(\sigma)}<2\varepsilon/3$.
\end{claim}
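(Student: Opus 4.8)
The goal is Claim \ref{claimA2}: finding $a,b\in B$ so that $\|e_a\overline{e_b}-g\|_{L^1(\sigma)}<2\varepsilon/3$, where $g:K\to\Lambda_k$ is constant on the cells of the equal-measure partition $\mathcal P=(P_1,\dots,P_r)$. The plan is to transport the problem into the finite group $\Lambda_k^r$ so that Lemma \ref{lemKleitmanRecurrence} applies. The natural device is the \emph{sampling map} $\Phi:\mathbb Z\to\Lambda_k^r$ that records, in each coordinate $j$, a $\Lambda_k$-approximation of the character $e_n$ on the cell $P_j$. Concretely, pick a sample point $x_j\in P_j$ (or round $e_n(x_j)$ to the nearest $k$-th root of unity) and set $\Phi(n)=(\rho(e_n(x_1)),\dots,\rho(e_n(x_r)))$, where $\rho:\mathcal S^1\to\Lambda_k$ is the nearest-root-of-unity rounding. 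Because $\sigma$ gives each $P_j$ mass $1/r$, Remark \ref{remIsometry} tells us that the normalized Hamming metric $d$ on $\Lambda_k^r$ is exactly the $L^1$ distance with respect to the normalized counting measure $m$ on $\{1,\dots,r\}$, and this counting measure mirrors $\sigma$ restricted to the cells. So an $L^1(\sigma)$-estimate for characters on $K$ becomes, up to the rounding error and the cell-diameter error, a Hamming-distance estimate in $\Lambda_k^r$.

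**Invoking measure concentration on a positive-density piece.**

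First I would reduce to a single Følner window. Since $d^*(B)>0$, there is a long interval $I$ (equivalently a piece of $B$) on which $|B\cap I|/|I|$ is close to $d^*(B)$; I pass to $A:=\Phi(B\cap I)\subseteq\Lambda_k^r$. The homomorphism-like structure of $\Phi$ is what makes this work: since $e_{a-b}=e_a\overline{e_b}$, the coordinate-wise product $\Phi(a)\Phi(b)^{-1}$ approximates $e_{a-b}$ on the sample points, i.e. $\Phi$ is \emph{almost} a homomorphism, with the quotient $\Phi(a)^{-1}\Phi(b)$ in $\Lambda_k^r$ corresponding to the character $e_{b-a}$. One must be a little careful that $\Phi$ is not literally a homomorphism—rounding spoils exact multiplicativity—but the rounding error in each coordinate is at most $\pi/k$, which $k$ large makes negligible; this is exactly why $k$ was chosen large in (\ref{eqApproxByg}). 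Next, let $x^*\in\Lambda_k^r$ be the target point encoding $g$, namely $x^*_j$ is the constant value of $g$ on $P_j$. Apply Lemma \ref{lemKleitmanRecurrence} with this $x^*$, with density parameter (a suitable fraction of) $d^*(B)$ and radius parameter $\varepsilon/6$, valid because $r>N(d^*(B),\varepsilon/6,k)$: it yields $a_1,a_2\in A$, hence $a,b\in B\cap I$, with $\Phi(a)^{-1}\Phi(b)\in U_{r\varepsilon/6}(x^*)$, i.e. Hamming distance at most $r\varepsilon/6$ from the encoding of $g$.

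**Converting the Hamming estimate back to $L^1(\sigma)$.**

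The final step is to translate $d(\Phi(a)^{-1}\Phi(b),x^*)\le r\varepsilon/6$ back into $\|e_a\overline{e_b}-g\|_{L^1(\sigma)}<2\varepsilon/3$. By Remark \ref{remIsometry}, Hamming radius $r\varepsilon/6$ corresponds to $L^1$ radius $\varepsilon/3$ for the \emph{sampled} functions, giving $\|e_a\overline{e_b}-g\|<\varepsilon/3$ when the two functions are evaluated at the sample points and $g$ at its cell-constant values. To replace the sample-point evaluation by the genuine $L^1(\sigma)$ integral, I would bound the oscillation of $e_a\overline{e_b}$ across each cell $P_j$ together with the rounding error $\le\pi/k$; choosing the partition fine enough (large $r$) and $k$ large enough controls both, contributing at most another $\varepsilon/3$. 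Summing the pieces yields the desired $2\varepsilon/3$, and combining with (\ref{eqApproxByg}) closes Claim \ref{claimA1}.

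**The main obstacle.**

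The genuinely delicate point is that $\Phi$ is only an \emph{approximate} homomorphism: the clean difference-set structure $A^{-1}A$ that Lemma \ref{lemKleitmanRecurrence} requires corresponds to $\{e_{b-a}\}$ only up to accumulated rounding errors across $r$ coordinates, and one must ensure these errors do not swamp the $\varepsilon/6$ Hamming budget. The resolution is to front-load the choice of $k$ (making per-coordinate rounding error $\le\pi/k$ arbitrarily small) before fixing $r$, so that the total rounding contribution is dwarfed by the concentration radius; the order of quantifier choices ($\varepsilon$, then $k$, then $r$) is exactly what the proof has already set up, so the bookkeeping, while fiddly, should go through without a new idea.
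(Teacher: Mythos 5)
Your proposal reverses the direction of the paper's argument, and the reversal is where it breaks. You transport $B$ into $\Lambda_k^r$ by sampling, $\Phi(n) = (\rho(e_n(x_1)),\dots,\rho(e_n(x_r)))$ with $x_j\in P_j$, and the fatal step is the last one: converting the Hamming estimate for $\Phi(a)^{-1}\Phi(b)$ back into an $L^1(\sigma)$ estimate for $e_a\overline{e_b}=e_{a-b}$. You propose to bound the oscillation of $e_{a-b}$ on each cell by ``choosing the partition fine enough (large $r$) and $k$ large enough,'' but the quantifiers run the wrong way: $k$, $r$, and $\mathcal P$ are fixed \emph{before} the concentration lemma produces $a,b$, while the oscillation of $e_{a-b}$ on a cell is of order $|a-b|\cdot\mathrm{diam}(P_j)$, and $|a-b|$ is not under your control. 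Indeed, for $A:=\Phi(B\cap I)$ to have density comparable to $d^*(B)$ in $\Lambda_k^r$ --- a step you assert without justification; it requires equidistribution of $(nx_1,\dots,nx_r)$, hence rational independence of the $x_j$ and $|I|$ at least of order $k^r$ --- the interval $I$ must be enormous, and the pair $a,b$ produced can have $|a-b|$ as large as $|I|$; no choice of $r$ and $k$ made in advance tames the oscillation of such a character on the cells, so the sample value $e_{a-b}(x_j)$ says nothing about the distribution of $e_{a-b}$ on $P_j$ with respect to $\sigma$. A clean sanity check that something must be wrong: your argument never invokes the Kronecker property of $K$, so if it worked it would prove the same claim for $\sigma$ equal to Lebesgue measure on $\mathbb T$ --- but that claim is false, since for a nonconstant $\Lambda_k$-valued step function $g$ one has $\|e_m-g\|_{L^1(\mathrm{Leb})}\geq 1-|\hat g(m)|$, which is bounded away from $0$ uniformly in $m$ (Riemann--Lebesgue, plus the fact that $|\hat g(m)|=1$ would force $g$ to be a character).

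The paper avoids this by running the approximation in the opposite direction, and that is exactly where the Kronecker hypothesis enters. Instead of sampling characters at points, it uses the Kronecker property to attach to each $\mathcal P$-measurable $\psi:K\to\Lambda_k$ (each element of $H_{r,k}$) an integer $n(\psi)$ with $\|e_{n(\psi)}-\psi\|_{L^1(\sigma)}<\varepsilon/3$ --- genuine $L^1(\sigma)$ closeness, with no sampling and hence no oscillation issue. This yields a finite set $E_{r,k}=\{n(\psi):\psi\in H_{r,k}\}$ with $|E_{r,k}|=|H_{r,k}|$, and Lemma \ref{lemUBDchar} (the finite-set characterization of upper Banach density) supplies a shift $z$ with $|(B+z)\cap E_{r,k}|\geq d^*(B)|E_{r,k}|$. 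The corresponding set $\tilde B_z\subseteq H_{r,k}$ then has density $\geq d^*(B)$ inside $H_{r,k}$, the concentration estimate (Lemma \ref{lemKleitmanRecurrence}, via Claim \ref{claimMC}) is applied to $\tilde B_z$ entirely inside the finite group, and the triangle inequality together with $e_{a+z}\overline{e_{b+z}}=e_a\overline{e_b}$ gives $a,b\in B$ with $\|e_a\overline{e_b}-g\|_{L^1(\sigma)}<2\varepsilon/3$. So the measure-concentration skeleton of your proposal matches the paper, but the bridge between $\mathbb Z$ and $\Lambda_k^r$ must be built with the Kronecker property rather than pointwise sampling; that substitution is a necessary idea here, not bookkeeping.
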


Now consider the set $H_{r,k}$ of functions $\psi: K \to \Lambda_{k}$ which are $\mathcal P$-measurable, with $\mathcal P$ as defined above.    Note that $H_{r,k}$ is a group under pointwise multiplication, isomorphic to $\Lambda_{k}^{r}$.  Furthermore, the $L^{1}(\sigma)$ ball of radius $\varepsilon$ around $g$ may be identified with a Hamming ball $U_{r\cdot \varepsilon/2}(x)$ in $\Lambda_{k}^{r}$, as mentioned in Remark \ref{remIsometry}.  Since $K$ is a Kronecker set, functions $\psi\in H_{r,k}$ can be approximated in $L^{1}(\sigma)$ by functions of the form $e_{n}$.

\begin{claim}\label{claimPsiByShifts}

There are $\psi_{1},\psi_{2} \in H_{r,k}$ satisfying simultaneously

\textup{(i)} $\|\psi_{1}\overline{\psi_{2}}-g\|_{L^{1}(\sigma)}<\varepsilon/3$, and

\textup{(ii)} there are $n_{1}, n_{2}\in B$ and $z\in \mathbb Z$ such that $\|e_{n_{j}+z}-\psi_{j}\|_{L^{1}(\sigma)}<\varepsilon/3$, $j=1$, $2$.

\end{claim}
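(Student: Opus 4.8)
The plan is to transfer Claim \ref{claimPsiByShifts} into the finite group $H_{r,k}\cong\Lambda_{k}^{r}$ and feed a dense subset to Lemma \ref{lemKleitmanRecurrence}. First I would fix the target in the finite group: identify $g$ with the element $x_{g}\in\Lambda_{k}^{r}$ recording the values of $g$ on the pieces $P_{1},\dots,P_{r}$. Since the pieces have equal $\sigma$-measure $1/r$, the $L^{1}(\sigma)$ metric on $H_{r,k}$ coincides with the normalized counting metric on $\Lambda_{k}^{r}$, so by Remark \ref{remIsometry} the Hamming ball $U_{r\varepsilon/6}(x_{g})$ is exactly the $L^{1}(\sigma)$-ball of radius $\varepsilon/3$ about $g$. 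This bookkeeping is what will convert a Hamming-radius conclusion of Lemma \ref{lemKleitmanRecurrence} into the $\varepsilon/3$ bound demanded in (i).

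Next I would build the set on which Lemma \ref{lemKleitmanRecurrence} is run. Let $\Phi\colon\mathbb{Z}\to H_{r,k}$ be the coding that samples $e_{n}$ at one chosen point of each piece of $\mathcal P$ and rounds each value to the nearest element of $\Lambda_{k}$. Because the points of $K$ are rationally independent (every Kronecker set is an independent set), Weyl's theorem makes each fiber $\Phi^{-1}(\psi)$ a Bohr set of well-defined density $1/|H_{r,k}|$; subadditivity of upper Banach density applied to $B\subseteq\bigsqcup_{\psi\in\Phi(B)}\Phi^{-1}(\psi)$ then forces $|\Phi(B)|\ge d^{*}(B)\,|H_{r,k}|$. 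I would take $A=\Phi(B)$, so that $|A|\ge d^{*}(B)\,|\Lambda_{k}^{r}|$.

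With $A$ in hand I would invoke Lemma \ref{lemKleitmanRecurrence} with $\delta=d^{*}(B)$, tolerance $\varepsilon/6$, and modulus $k$; the choice $r>N(d^{*}(B),\varepsilon/6,k)$ guarantees that $A^{-1}A$ meets the Hamming ball $U_{r\varepsilon/6}(x_{\bar g})$. This produces $n_{1},n_{2}\in B$ with $\psi_{j}:=\Phi(n_{j})$ satisfying $\overline{\psi_{1}}\psi_{2}\in U_{r\varepsilon/6}(x_{\bar g})$; taking conjugates (an $L^{1}(\sigma)$-isometry) gives $\|\psi_{1}\overline{\psi_{2}}-g\|_{L^{1}(\sigma)}<\varepsilon/3$, which is clause (i). To obtain clause (ii) I would then locate a single shift $z$ and, if necessary, adjust the $\psi_{j}$ within $H_{r,k}$ so that $\|e_{n_{j}+z}-\psi_{j}\|_{L^{1}(\sigma)}<\varepsilon/3$ for $j=1,2$, using the strong Kronecker approximation property recorded before the proof. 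Once (i) and (ii) hold, the triangle inequality $\|e_{n_{1}}\overline{e_{n_{2}}}-g\|\le\|e_{n_{1}+z}\overline{e_{n_{2}+z}}-\psi_{1}\overline{\psi_{2}}\|+\|\psi_{1}\overline{\psi_{2}}-g\|$, together with $e_{n_{1}+z}\overline{e_{n_{2}+z}}=e_{n_{1}}\overline{e_{n_{2}}}$, delivers Claim \ref{claimA2}.

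The main obstacle is step (ii): upgrading the sample-point agreement supplied by the measure concentration lemma to genuine $L^{1}(\sigma)$-approximation by characters $e_{n_{j}+z}$. The difficulty is that a character $e_{m}$ of large frequency oscillates across each piece and is therefore $L^{1}(\sigma)$-far from every step function, so $\Phi(n_{j})$ need not itself be $L^{1}(\sigma)$-close to $e_{n_{j}}$; this is precisely where the common shift $z$ and the Kronecker property of $K$ must be used at full strength to tame the oscillation and realize the chosen step functions as honest $L^{1}(\sigma)$-limits of $e_{n_{j}+z}$. Every other ingredient — the isometry identification, the density transfer by subadditivity and equidistribution, and the final triangle-inequality estimates — is routine once this reconciliation is in place.
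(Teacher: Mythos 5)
Your reduction to Lemma \ref{lemKleitmanRecurrence} via the isometry of Remark \ref{remIsometry} matches the paper, but the way you produce the dense subset $A\subseteq H_{r,k}$ contains a genuine gap --- one you yourself flag in your last paragraph but do not close, and it cannot be closed in the order you have set things up. Your map $\Phi$ pushes $B$ forward into $\Lambda_{k}^{r}$ by sampling $e_{n}$ at one point per piece of $\mathcal P$, so the only relation between $n_{j}$ and $\psi_{j}:=\Phi(n_{j})$ is agreement at $r$ sample points; as you note, for large $|n_{j}|$ the character $e_{n_{j}}$ oscillates across each piece, and $\|e_{n_{j}}-\Phi(n_{j})\|_{L^{1}(\sigma)}$ can be large. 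After Lemma \ref{lemKleitmanRecurrence} has been applied, $n_{1},n_{2}$ are \emph{fixed}, and clause (ii) demands a single $z$ with $\|e_{n_{j}+z}-\psi_{j}\|_{L^{1}(\sigma)}<\varepsilon/3$ for both $j$. Nothing supplies such a $z$: the Kronecker property gives, for each step function $\psi_{j}$, \emph{some} integer $m_{j}$ with $e_{m_{j}}$ close to $\psi_{j}$, but it gives no control on the form of $m_{j}$, and in particular no reason one can take $m_{j}=n_{j}+z$ with a common $z$ --- multiplication by $e_{z}$ is one fixed function on $K$ applied to both indices, while the two required corrections $\overline{e_{n_{j}}}\psi_{j}$ are unrelated. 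The fallback you mention, adjusting the $\psi_{j}$ inside $H_{r,k}$ to be close to $e_{n_{j}+z}$, destroys clause (i) unless $e_{n_{1}}\overline{e_{n_{2}}}$ was already close to $g$, which is exactly Claim \ref{claimA2} --- the argument becomes circular.

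The paper resolves this by running your construction in the opposite direction, and this reversal is the essential missing idea. Instead of coding integers into step functions by sampling (which loses $L^{1}(\sigma)$-proximity), it codes step functions into integers: for every $\psi\in H_{r,k}$ the Kronecker property provides $n(\psi)$ with $\|e_{n(\psi)}-\psi\|_{L^{1}(\sigma)}<\varepsilon/3$, chosen injectively, giving a finite set $E_{r,k}\subseteq\mathbb Z$ with $|E_{r,k}|=|H_{r,k}|$. Lemma \ref{lemUBDchar} (the finite characterization of $d^{*}$) then produces a shift $z$ with $|(B+z)\cap E_{r,k}|\geq d^{*}(B)|E_{r,k}|$, so the set $\tilde{B}_{z}$ of step functions that are genuinely $\varepsilon/3$-close to some $e_{n+z}$ with $n\in B$ satisfies $|\tilde{B}_{z}|\geq d^{*}(B)|H_{r,k}|$ (this is Claim \ref{claimUBDforSimple}). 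Only now is measure concentration (Claim \ref{claimMC}) applied, to $A=\tilde{B}_{z}$: every element of $A$ satisfies clause (ii) by construction, so the pair $\psi_{1},\psi_{2}$ it outputs satisfies (i) and (ii) simultaneously. In short: the shift $z$ and the approximating integers must be fixed \emph{before} the concentration lemma is invoked, not after; your density-transfer step (fibers of $\Phi$ having uniform density $1/|H_{r,k}|$, plus subadditivity) is fine on its own, but it feeds the concentration lemma a set whose elements carry no usable $L^{1}(\sigma)$ information.
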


Inequalities (i) and (ii), together with the triangle inequality,  imply $\|e_{n_{1}+z}\overline{e_{n_{2}+z}} - g\|_{L^{1}(\sigma)}<2\varepsilon/3$. The last inequality implies Claim \ref{claimA2},  since $e_{n_{1}+z}\overline{e_{n_{2}+z}}=e_{n_{1}}\overline{e_{n_{2}}}$.  The remainder of the proof of Proposition \ref{propSimpleCase} is dedicated to proving Claim \ref{claimPsiByShifts}.

Inequality (i) involves only elements of $H_{r,k}$.  Regarding $\psi_{1},\psi_{2}$, and $g$ as elements of $\Lambda_{k}^{r}$, the inequality (i) says that $\psi_{1}\overline{\psi_{2}}$ lies in the Hamming ball $U_{r\cdot \varepsilon/6}(g)$ centered at $g$.  Using this correspondence, we will derive the following claim from Lemma \ref{lemKleitmanRecurrence}.

\begin{claim}\label{claimMC}
  Let  $N=N(\delta,\varepsilon/6,k)$ be as in Lemma \ref{lemKleitmanRecurrence}.   If $r>N$ and $A\subseteq H_{r,k}$ has $|A| \geq \delta |H_{r,k}|$, then for all $g\in H_{r,k}$, there are $a_{1},a_{2}\in A$ such that $\|a_{1}a_{2}^{-1}-g\|_{L^{1}(\sigma)}< \varepsilon/3$.
\end{claim}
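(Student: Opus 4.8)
The plan is to recognize Claim \ref{claimMC} as nothing more than Lemma \ref{lemKleitmanRecurrence} transported through the group isomorphism $H_{r,k} \cong \Lambda_{k}^{r}$, so that essentially all the analytic work has already been carried out in proving the measure-concentration lemma. First I would fix the isomorphism explicitly: since $\mathcal P = (P_{1}, \dots, P_{r})$ partitions $K$ into $r$ pieces of equal $\sigma$-measure, a $\mathcal P$-measurable function $\psi \in H_{r,k}$ is determined by its constant values $(\psi_{1}, \dots, \psi_{r}) \in \Lambda_{k}^{r}$ on the $P_{j}$, and $\psi \mapsto (\psi_{1}, \dots, \psi_{r})$ is a group isomorphism carrying pointwise multiplication to the coordinatewise product. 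Under this map the hypothesis $|A| \geq \delta |H_{r,k}|$ becomes $|A| \geq \delta |\Lambda_{k}^{r}|$, and complex conjugation $\overline{\psi}$ corresponds to the group inverse $\psi^{-1}$ (since the values lie in $\mathcal S^{1}$), so that $\psi_{1}\overline{\psi_{2}}$ corresponds to $a_{1}a_{2}^{-1}$.

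The crucial bookkeeping step is to match the two metrics. Because each $P_{j}$ has $\sigma(P_{j}) = 1/r$, for $\psi, \phi \in H_{r,k}$ one computes $\|\psi - \phi\|_{L^{1}(\sigma)} = \sum_{j=1}^{r} \sigma(P_{j})|\psi_{j} - \phi_{j}| = \frac{1}{r} \sum_{j=1}^{r} |\psi_{j} - \phi_{j}|$, which is exactly the $L^{1}(m)$-distance between $(\psi_{j})$ and $(\phi_{j})$ in $\Lambda_{k}^{r}$, where $m$ is normalized counting measure on $\{1, \dots, r\}$. Remark \ref{remIsometry} then identifies the $L^{1}(\sigma)$-ball of radius $\varepsilon/3$ about $g$ with the Hamming ball $U_{r\cdot \varepsilon/6}(g)$. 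Thus the desired conclusion $\|a_{1}a_{2}^{-1} - g\|_{L^{1}(\sigma)} < \varepsilon/3$ is equivalent to $a_{1}a_{2}^{-1} \in U_{r\cdot \varepsilon/6}(g)$, i.e.\ to $(A^{-1}A) \cap U_{r\cdot \varepsilon/6}(g) \neq \varnothing$, where I use that $\Lambda_{k}^{r}$ is abelian, so $A A^{-1} = A^{-1}A$ and the labels $a_{1}^{-1}a_{2}$ and $a_{1}a_{2}^{-1}$ are interchangeable.

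With this dictionary in place the conclusion is immediate: applying Lemma \ref{lemKleitmanRecurrence} with parameters $(\delta, \varepsilon/6, k)$ and the point $x = g$, the hypotheses $r > N(\delta, \varepsilon/6, k)$ and $|A| \geq \delta |\Lambda_{k}^{r}|$ produce $a_{1}, a_{2} \in A$ with $a_{1}^{-1}a_{2} \in U_{r\cdot \varepsilon/6}(g)$, which is precisely what is needed. There is no genuinely hard step here, all the content residing in Lemma \ref{lemMeasureConcentration}; the only point requiring care is the open-versus-closed boundary discrepancy, since the Hamming ball uses $\leq r\cdot\varepsilon/6$ while the claim asks for the strict inequality $< \varepsilon/3$. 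I would absorb this either by applying the lemma with a radius slightly smaller than $\varepsilon/6$ (harmlessly enlarging $N$), or by observing that $d$ takes only finitely many values on $\Lambda_{k}^{r}$, so one may pass from $\leq$ to $<$ after an arbitrarily small shrink of the constant. This is the main, and essentially the only, obstacle, and it is purely cosmetic.
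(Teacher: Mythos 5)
Your proposal is correct and follows essentially the same route as the paper's own proof: identify $H_{r,k}$ with $\Lambda_{k}^{r}$, use Remark \ref{remIsometry} to match the $L^{1}(\sigma)$-ball of radius $\varepsilon/3$ about $g$ with the Hamming ball $U_{r\cdot \varepsilon/6}(g)$, and invoke Lemma \ref{lemKleitmanRecurrence} at the point $x=g$. If anything, your bookkeeping is more careful than the paper's: the paper's claimed isometry with $d'=2d$ omits the normalizing factor $1/r$ that your computation makes explicit, and the paper passes silently over the closed-versus-open ball discrepancy, which you correctly identify as cosmetic and resolve.
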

\begin{proof}[Proof of Claim \ref{claimMC}]
  The group $H_{r,k}$ of $\Lambda_{k}$-valued $\mathcal P$-measurable functions is isomorphic to the group $\Lambda_{k}^{r}$, and the natural isomorphism induces an isometry between $H_{r,k}$ with the $L^{1}(\sigma)$ metric and the metric space $(\Lambda_{k}^{r},d')$, where $d'=2d$, and $d$ is the metric defined on $\Lambda_{k}^{r}$ in Section \ref{secMC}.  Now Claim 4 follows immediately from Lemma \ref{lemKleitmanRecurrence} and Remark \ref{remIsometry}.
\end{proof}
Claim \ref{claimMC} says that we can prove Claim \ref{claimPsiByShifts} if a substantial portion of the $\psi \in H_{r,k}$ can be $\varepsilon/3$-approximated in $L^{1}(\sigma)$ by exponentials $e_{n}$, where $n\in B$. In fact, it is enough to show that there is a single translate $B+z$ of $B$ such that a substantial portion of $H_{r,k}$ can be $\varepsilon/3$-approximated by exponentials of the form $e_{n}$, where $n$ is in $B+z$, since $e_{a+z}\overline{e_{b+z}} = e_{a}\overline{e_{b}}$.

\begin{claim}\label{claimUBDforSimple}
  For $z\in \mathbb Z$, let $\tilde{B}_{z} \subseteq H_{r,k}$ be the set of $\psi\in H_{r,k}$ satisfying
   \begin{align}\label{eqDenseTranslate}
     \text{There exists $n\in B$ such that $\|e_{n+z}-\psi\|_{L^{1}(\sigma)}<\varepsilon/3$.}
   \end{align}
  Then $|\tilde{B}_{z}| \geq d^{*}(B)|H_{r,k}|$ for some $z$.
\end{claim}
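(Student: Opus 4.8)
The plan is to reduce the claim to a density computation by fixing, for each $\psi$, a single integer at which $e_{\bullet}$ approximates $\psi$, and then averaging the shift $z$ along a well-chosen F{\o}lner sequence. First I would reformulate membership in $\tilde{B}_z$. For $\psi\in H_{r,k}$ set
\[
  E_{\psi} := \{w\in \mathbb{Z} : \|e_{w}-\psi\|_{L^{1}(\sigma)} < \varepsilon/3\}.
\]
Unwinding (\ref{eqDenseTranslate}), $\psi\in \tilde{B}_{z}$ holds exactly when $n+z\in E_{\psi}$ for some $n\in B$, i.e.\ when $z\in E_{\psi}-B$. Since $\psi$ is a measurable $\mathcal{S}^{1}$-valued function on $K$, the Kronecker property recorded at the start of this section furnishes at least one integer $t_{\psi}\in E_{\psi}$. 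Hence $t_{\psi}-B\subseteq E_{\psi}-B$, so writing
\[
  g(z) := \bigl|\{\psi\in H_{r,k} : z\in t_{\psi}-B\}\bigr|,
\]
we have $g(z)\le |\tilde{B}_{z}|$ for every $z$. It therefore suffices to find $z$ with $g(z)\ge d^{*}(B)\,|H_{r,k}|$.

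Next I would choose the averaging window. Since $d^{*}(-B)=d^{*}(B)$ and $d^{*}$ is the maximum of $d_{\mathbf{\Phi}}(-B)$ over F{\o}lner sequences, fix a F{\o}lner sequence $\mathbf{\Phi}=(\Phi_{j})$ with $d_{\mathbf{\Phi}}(-B)=d^{*}(B)=:\beta$. The decisive point is that, although each $t_{\psi}-B$ might a priori have its density witnessed on a different window, all of them are translates of the single set $-B$, so their densities along the fixed sequence $\mathbf{\Phi}$ coincide. Concretely, for each fixed $\psi$ the F{\o}lner property gives $|\Phi_{j}\triangle(\Phi_{j}-t_{\psi})|/|\Phi_{j}|\to 0$, whence
\[
  \frac{|(t_{\psi}-B)\cap \Phi_{j}|}{|\Phi_{j}|} = \frac{|(-B)\cap(\Phi_{j}-t_{\psi})|}{|\Phi_{j}|} \xrightarrow{\ j\to\infty\ } \beta.
\]
Because $H_{r,k}$ is finite I may add these finitely many limits and interchange the two finite sums to obtain
\[
  \frac{1}{|\Phi_{j}|}\sum_{z\in \Phi_{j}} g(z) = \sum_{\psi\in H_{r,k}} \frac{|(t_{\psi}-B)\cap \Phi_{j}|}{|\Phi_{j}|} \xrightarrow{\ j\to\infty\ } \beta\,|H_{r,k}|.
\]

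Finally I would pass from the average to a pointwise bound. For each $j$ the maximum of $g$ over $\Phi_{j}$ is at least the displayed average, so $\sup_{z} g(z)$ exceeds the $j$-average for every $j$, hence is at least its limit $\beta\,|H_{r,k}|$; as $g$ is integer-valued and bounded by $|H_{r,k}|$, this supremum is attained at some $z$, giving $|\tilde{B}_{z}|\ge g(z)\ge d^{*}(B)\,|H_{r,k}|$, as required. I expect the only genuine obstacle to be exactly the one just handled: guaranteeing that a \emph{single} shift $z$ serves a full $d^{*}(B)$-proportion of the $\psi$ simultaneously. Naively the density of each $E_{\psi}-B$ could be exhibited on an unrelated F{\o}lner window, but collapsing $E_{\psi}$ to the one point $t_{\psi}$ turns every relevant set into a translate of $-B$, and translation-invariance of density along a fixed F{\o}lner sequence, combined with the finiteness of $H_{r,k}$, forces a common good window.
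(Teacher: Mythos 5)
Your proof is correct, and the underlying mechanism is the same as the paper's: use the Kronecker property to attach one integer representative to each $\psi\in H_{r,k}$, then find a single good shift $z$ by averaging over a F{\o}lner sequence that realizes the relevant density and applying the pigeonhole principle. The difference is in execution, and it is worth recording. The paper chooses the representatives $n(\psi)$ to be pairwise distinct, forms the set $E_{r,k}:=\{n(\psi):\psi\in H_{r,k}\}$ with $|E_{r,k}|=|H_{r,k}|$, and invokes Lemma \ref{lemUBDchar}; your argument instead inlines that lemma's F{\o}lner-averaging proof (your displayed average is essentially the computation in its proof, applied to $-B$) but counts representatives with multiplicity through the function $g$. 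That multiset counting is a small but genuine improvement: the paper's insistence that $n(\psi)\neq n(\psi')$ whenever $\psi\neq\psi'$ is only possible because each approximation set $E_{\psi}=\{w:\|e_{w}-\psi\|_{L^{1}(\sigma)}<\varepsilon/3\}$ contains infinitely many integers, a fact the paper uses tacitly and never justifies, whereas you need only one element $t_{\psi}$ of each $E_{\psi}$ and are indifferent to collisions. The price you pay is negligible --- a two-line verification that the density of a translate of $-B$ along a fixed F{\o}lner sequence equals the density of $-B$ itself --- so your route is self-contained where the paper's carries an unproved, though true, side condition.
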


\begin{proof}
  For each $\psi \in H_{r,k}$, let $n(\psi)\in \mathbb Z$ be such that $\|e_{n(\psi)}-\psi\|_{L^{1}(\sigma)}<\varepsilon/3$.  Such an $n(\psi)$ exists, since $K$ is a Kronecker set.  Furthermore, choose $n(\psi)$ so that $n(\psi)\neq n(\psi')$ if $\psi\neq \psi'$.  Let $E_{r,k} = \{n(\psi) : \psi\in H_{r,k}\}$, so that $|E_{r,k}| = |H_{r,k}|$, and $E_{r,k}\subseteq \mathbb Z$.  Apply Lemma \ref{lemUBDchar} to find a translate $B+z$ of $B$ such that $|(B+z)\cap E_{r,k}| \geq d^{*}(B) |E_{r,k}|$.  The definition of $n(\psi)$ implies $|\tilde{B}_{z}| \geq |(B+z)\cap E_{r,k}|$, so we have the desired conclusion.
\end{proof}

Since we chose $r> N(d^{*}(B),\varepsilon/6, k)$, Claims \ref{claimMC} and \ref{claimUBDforSimple} together imply Claim \ref{claimPsiByShifts}, which implies Claim \ref{claimA2}.  Since Claim \ref{claimA2} implies Claim \ref{claimA1}, this completes the proof of Proposition \ref{propSimpleCase}, Part (a).

Part (b) follows immediately from Part (a): since $\sigma$ is supported on a Kronecker set, there is an exponential $e_{m}$ such that $\|f-e_{m}\|_{L^{1}(\sigma)}<\varepsilon/2$, so $Q_{\varepsilon,f}$ contains $S':=\{n: \|e_{n}-e_{m}\|_{L^{1}(\sigma)}<\varepsilon/2\}$.   Observe that $S'-m = S_{\varepsilon/2}$ (as defined in Part (a) of the Proposition), so every translate of $S'$ is a set of recurrence.  Consequently, every translate of $Q_{\varepsilon,f}$ is a set of recurrence. \end{proof}

 \section{Sets of recurrence}\label{subsecSetsOfRecurrence}

The lemmas in this section use the following property of ergodic systems, which is actually equivalent to ergodicity.

\begin{enumerate}
  \item[(E)]  If $(X,\mu,T)$ is an ergodic system and $C,D\subseteq X$ have positive measure, then there is an $n\in \mathbb Z$ such that $\mu(C\cap T^{n}D)>0$. \end{enumerate}

\begin{lemma}\label{lemExpanding}
  For a set of integers $S$, the following conditions are equivalent.
  \begin{enumerate}
    \item[(i)]   Every translate of $S$ is a set of recurrence.
    \item[(ii)]  For every measure preserving system $(X,\mu,T)$ and every pair of sets $D_{1},D_{2}\subseteq X$: if there is an $n\in \mathbb Z$ such that $\mu(D_{1}\cap T^{n}D_{2})>0$, then there is an $m\in S$ such that $\mu(D_{1}\cap T^{m}D_{2})>0$.
    \item[(iii)] If $(X,\mu,T)$ is an ergodic measure preserving system and $\mu(D)>0$, then $\mu\bigl(\bigcup_{n\in S} T^{n}D\bigr)=1$.
  \end{enumerate}
\end{lemma}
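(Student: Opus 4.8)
The plan is to establish the cycle (i) $\Leftrightarrow$ (ii), (ii) $\Rightarrow$ (iii), and (iii) $\Rightarrow$ (i), so that all three conditions become equivalent. Two of these steps are elementary manipulations with translates and complements; the only genuinely technical step is the passage back from the ergodic statement (iii) to the statements (i) and (ii) about \emph{arbitrary} systems, and this is the step I expect to be the main obstacle.

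For (i) $\Rightarrow$ (ii), given a system $(X,\mu,T)$ and sets $D_{1},D_{2}$ with $\mu(D_{1}\cap T^{n}D_{2})>0$ for some $n$, I set $C:=D_{1}\cap T^{n}D_{2}$. Since $C\subseteq D_{1}$ and $T^{-n}C\subseteq D_{2}$, for any $m$ we have $C\cap T^{m-n}C\subseteq D_{1}\cap T^{m}D_{2}$. The set $S-n=S+(-n)$ is a translate of $S$, hence a set of recurrence by (i); applying recurrence to $C$ (which has positive measure) produces $k=m-n\in S-n$ with $\mu(C\cap T^{k}C)>0$, and then $m\in S$ satisfies $\mu(D_{1}\cap T^{m}D_{2})>0$. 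For the converse (ii) $\Rightarrow$ (i): to show a translate $S+m$ is a set of recurrence, take $D$ with $\mu(D)>0$, put $D_{1}=D$ and $D_{2}=T^{m}D$; the choice $n=-m$ gives $\mu(D_{1}\cap T^{-m}D_{2})=\mu(D)>0$, so (ii) supplies $s\in S$ with $\mu(D_{1}\cap T^{s}D_{2})=\mu(D\cap T^{s+m}D)>0$, exhibiting the recurrence time $s+m\in S+m$.

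For (ii) $\Rightarrow$ (iii), let $(X,\mu,T)$ be ergodic, $\mu(D)>0$, and $U:=\bigcup_{n\in S}T^{n}D$. If $\mu(X\setminus U)>0$, then property (E) applied to the positive-measure sets $X\setminus U$ and $D$ yields $n$ with $\mu((X\setminus U)\cap T^{n}D)>0$; now (ii), with $D_{1}=X\setminus U$ and $D_{2}=D$, produces $m\in S$ with $\mu((X\setminus U)\cap T^{m}D)>0$. But $T^{m}D\subseteq U$ for $m\in S$, so this intersection is empty, a contradiction; hence $\mu(U)=1$.

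For (iii) $\Rightarrow$ (i), I must promote the ergodic conclusion to arbitrary systems, which is the crux. For an ergodic system the implication is direct: given $\mu(D)>0$ and $m\in\mathbb Z$, condition (iii) gives $\mu(\bigcup_{s\in S}T^{s}D)=1$, so $\mu(T^{-m}D\cap\bigcup_{s\in S}T^{s}D)=\mu(D)>0$, and some $s\in S$ satisfies $\mu(T^{-m}D\cap T^{s}D)=\mu(D\cap T^{s+m}D)>0$, i.e. $s+m\in S+m$ witnesses recurrence on this ergodic system. To reach an arbitrary system I would invoke the ergodic decomposition $\mu=\int\mu_{x}\,d\mu(x)$: if $\mu(D)>0$ then $\mu_{x}(D)>0$ on a positive-measure set of components, on each of which the ergodic case yields some $s(x)\in S$ with $\mu_{x}(D\cap T^{s(x)+m}D)>0$. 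The subtlety, and the main obstacle, is that $s(x)$ a priori depends on $x$; I resolve this using that $S$ is countable, so the measurable set $\{x:\mu_{x}(D\cap T^{s+m}D)>0\}$ has positive measure for at least one fixed $s\in S$, and integrating gives $\mu(D\cap T^{s+m}D)=\int\mu_{x}(D\cap T^{s+m}D)\,d\mu(x)>0$. This shows $S+m$ is a set of recurrence for every system, closing the cycle and proving the equivalence of (i), (ii), and (iii).
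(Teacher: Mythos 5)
Your steps (i) $\Rightarrow$ (ii) and (ii) $\Rightarrow$ (iii) coincide with the paper's argument (the paper proves the cycle (i) $\Rightarrow$ (ii) $\Rightarrow$ (iii) $\Rightarrow$ (i), so your separate (ii) $\Rightarrow$ (i) is redundant but harmless). The genuine divergence is in (iii) $\Rightarrow$ (i), which you correctly identified as the crux. The paper never invokes ergodic decomposition: it reduces (tacitly via the reverse correspondence principle, Lemma \ref{lemReverseCorrespondence}) to showing that $S$ is intersective, i.e.\ that $S\cap(B-B)\neq\varnothing$ whenever $d^{*}(B)>0$, and then applies Furstenberg's correspondence principle (Lemma \ref{lemCorrespondence}), which by design produces an \emph{ergodic} system and a set $D$ with $\mu(D)\geq d^{*}(B)$ and $\mu(D\cap T^{n}D)\leq d^{*}(B\cap(B-n))$; condition (iii) then finishes exactly as in your ergodic case. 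Your route stays entirely inside ergodic theory, which is arguably cleaner, but what the paper's detour through density buys is generality: ergodic decomposition $\mu=\int\mu_{x}\,d\mu(x)$ requires the underlying probability space to be standard (a Lebesgue space), and the paper nowhere restricts its systems to such spaces. As written, your integration step is therefore unjustified for an arbitrary system, and this is a real, though repairable, omission: you should either first reduce to a standard model --- e.g.\ replace $(X,\mu,T)$ by the factor generated by $\{T^{n}D: n\in\mathbb Z\}$, realized as the shift on $\{0,1\}^{\mathbb Z}$ with the pushforward of $\mu$ under $x\mapsto (1_{D}(T^{n}x))_{n\in\mathbb Z}$, which preserves every quantity $\mu(D\cap T^{n}D)$ --- or route through the correspondence principle as the paper does. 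Your countability observation (that some single $s\in S$ must work on a positive-measure set of ergodic components) is correct and is exactly the right way to close the argument once a decomposition is available.
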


\begin{proof}
(i)$\implies$(ii): Suppose every translate of $S$ is a set of recurrence, $(X,\mu,T)$ is a measure preserving system, $n\in \mathbb Z$, $D_{1},D_{2}\subseteq X$, and $\mu(D_{1}\cap T^{n}D_{2})>0$.   Since $S-n$ is a set of recurrence, there is an $m\in S$ such that
  \[
    \mu(D_{1}\cap T^{n}D_{2} \cap T^{m-n}(D_{1}\cap T^{n}D_{2}))>0
  \]
  which implies $\mu(D_{1}\cap T^{m}D_{2})>0$, since $T^{m-n}(D_{1}\cap T^{n}D_{2})= T^{m-n}D_{1}\cap T^{m}D_{2}$.

(ii)$\implies$(iii):  Let $\mathbf X = (X,\mu,T)$ be an ergodic system, and let $S\subseteq \mathbb Z$ satisfy (ii).  For $D\subseteq X$, write $S\cdot D$ for $\bigcup_{n\in S} T^{n}D$.  Suppose, to get a contradiction, that $\mu(D)>0$ and $\mu(S\cdot D)<1$.  Let $E=X\setminus (S\cdot D)$.  Then $\mu(E)>0$ and the ergodicity of $\mathbf X$ implies that there is an $n$ such that $\mu(E \cap T^{n}D)>0$.  Property (ii) implies that there is an $m\in S$ such that $\mu(E \cap T^{m}D)>0$, and this is the desired contradiction.

(iii)$\implies$(i):  Suppose $S$ satisfies (iii).  It suffices to show that $S$ is a set of recurrence, since condition (iii) is translation invariant.  Suppose $B\subseteq \mathbb Z$ satisfies $d^{*}(B)>0$; we must show that $B\cap (B-n) \neq \varnothing$ for some $n\in S$.

Apply Lemma \ref{lemCorrespondence} to find an ergodic system $(X,\mu,T)$ and a set $D\subseteq X$ such that $\mu(D)\geq d^{*}(B)$ and $\mu(D\cap T^{n}D) \leq d^{*}(B\cap (B-n))$ for every $n\in \mathbb Z$.  It then suffices to find $n\in S$ such that $\mu(D\cap T^{n}D)>0$.  Condition (iii) implies $\mu(S\cdot D)=1$, so $\mu(D\cap (S\cdot D)) = \mu(D)$, and there must be an $n\in S$ such that $\mu(D\cap T^{n}D)>0$.  \end{proof}

\begin{lemma}\label{lemShiftInfinite}
  If every translate of $S$ is a set of recurrence, $d^{*}(A)>0$, and $n\in \mathbb{Z}$, then $S\cap (n+A-A)$ is infinite.
\end{lemma}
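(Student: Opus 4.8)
The plan is to reduce the statement to the single assertion that \emph{if $R\subseteq\mathbb Z$ is a set of recurrence and $d^{*}(A)>0$, then $R\cap(A-A)$ is infinite}, and then to prove that assertion by feeding the Furstenberg correspondence principle (Lemma \ref{lemCorrespondence}) into a product argument that forces recurrence times to be large. To set up the reduction I write the translate parameter as $t$ (the ``$n$'' of the statement, renamed to avoid clashing with the dummy recurrence time). One checks the set identity $S\cap(t+A-A)=t+\bigl((S-t)\cap(A-A)\bigr)$, so $S\cap(t+A-A)$ is infinite if and only if $(S-t)\cap(A-A)$ is infinite. Since $S-t$ is a translate of $S$, it is a set of recurrence, so it suffices to prove the boxed assertion with $R=S-t$; note that only this single instance of the ``every translate'' hypothesis is used.

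Next I would convert the density hypothesis into dynamics. By Lemma \ref{lemCorrespondence} there is an ergodic system $(X,\mu,T)$ and a set $D\subseteq X$ with $\mu(D)\geq d^{*}(A)>0$ and $\mu(D\cap T^{n}D)\leq d^{*}(A\cap(A-n))$ for every $n$. Consequently, whenever $\mu(D\cap T^{n}D)>0$ we get $d^{*}(A\cap(A-n))>0$, hence $A\cap(A-n)\neq\varnothing$ and $n\in A-A$. Thus it is enough to show that
\[
  G:=\{n\in R:\mu(D\cap T^{n}D)>0\}
\]
is infinite, for then $R\cap(A-A)\supseteq G$ is infinite.

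The heart of the argument is showing $G$ is infinite, and this is where I expect the only genuine obstacle: upgrading a single recurrence time to arbitrarily large ones. Suppose for contradiction that $G$ is finite and set $M=\max_{n\in G}|n|$. Choose a prime $p>M$ and form the product system $(X\times\mathbb Z/p\mathbb Z,\ \mu\times u,\ T\times\tau)$, where $u$ is the uniform probability measure on $\mathbb Z/p\mathbb Z$ and $\tau$ is the rotation $j\mapsto j+1$. Put $\tilde D=D\times\{0\}$, so $(\mu\times u)(\tilde D)=\mu(D)/p>0$. Since $R$ is a set of recurrence, there is a nonzero $n\in R$ with $(\mu\times u)(\tilde D\cap(T\times\tau)^{n}\tilde D)>0$; because $(T\times\tau)^{n}\tilde D=T^{n}D\times\{n\bmod p\}$, positivity forces simultaneously $p\mid n$ and $\mu(D\cap T^{n}D)>0$. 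The latter gives $n\in G$, so $|n|\leq M<p$, while $p\mid n$ together with $n\neq 0$ gives $|n|\geq p$, a contradiction. Hence $G$ is infinite, which finishes the proof.

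One point worth flagging explicitly: the step ``there is a nonzero $n\in R$'' uses the convention that recurrence is witnessed at a nonzero time (equivalently, that $0$ is not counted), since otherwise the conclusion fails outright, as $R=\{0\}$ shows; so this reading is forced by the statement. With that convention in place, the product with a cyclic rotation is exactly the standard device that converts a single recurrence into infinitely many by forcing every recurrence time to be divisible by an arbitrarily large prime.
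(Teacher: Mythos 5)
Your reduction to a single translate is exactly where the argument breaks. After replacing $S$ by $R=S-t$, the only hypothesis you retain is ``$R$ is a set of recurrence,'' and under the paper's definition this is vacuous whenever $t\in S$: in that case $0\in R$, and every set containing $0$ is a set of recurrence, so no information about $S$ survives the reduction. The paper's convention genuinely does allow $n=0$ as a recurrence witness --- immediately after this lemma it remarks that the singleton $\{0\}$ is a set of recurrence, precisely to explain why the lemma requires effort. Consequently your boxed assertion (``$R$ a set of recurrence $\Rightarrow R\cap(A-A)$ infinite'') is false in the paper's setting, as you yourself note via $R=\{0\}$. Your escape route --- declaring that the nonzero-witness convention ``is forced by the statement'' --- is not legitimate: $R=\{0\}$ is not a counterexample to the lemma, because $\{0\}$ fails the lemma's actual hypothesis (its translate $\{1\}$ is not a set of recurrence). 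What the example really shows is that your reduction throws away the part of the hypothesis that the lemma lives on. Under the nonzero-witness convention you are proving a statement with an a priori stronger hypothesis; to connect it to the paper's lemma you would need to show that ``every translate of $S$ is a set of recurrence (with $0$ allowed)'' already yields nonzero witnesses --- indeed witnesses outside any prescribed finite set --- and that implication is essentially the lemma itself, so the argument is circular as structured.

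The salvageable parts are the translation identity, the use of Lemma \ref{lemCorrespondence}, and the cyclic-rotation product trick, which correctly forces $p\mid n$; under the stronger convention your contradiction does go through. But the cyclic trick can only force divisibility, and applied to $R=S-t$ with $0\in R$ it is satisfied by $n=0$, yielding no contradiction; no choice of fiber set over $\mathbb{Z}/p\mathbb{Z}$ fixes this, because forbidding small nonzero return times is a statement about \emph{pairs} of sets, not about returns of a single set to itself. This is why the paper never reduces to one translate: it proves a two-set recurrence property, Lemma \ref{lemExpanding}(ii), whose proof uses all translates of $S$ simultaneously (the translate $S-n$ is applied to the set $D_{1}\cap T^{n}D_{2}$), and then takes a product with a \emph{weak mixing} system $\mathbf{Y}$, choosing $C_{1}=D\times E$ and $C_{2}=D\times R^{-n_{0}}E$ with $E\cap R^{m-n_{0}}E=\varnothing$ for $m$ in the finite set $F$ to be avoided. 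Weak mixing of $\mathbf{Y}$ both guarantees such an $E$ exists and makes the product ergodic, so that some return time for the pair $(C_{1},C_{2})$ exists at all; Lemma \ref{lemExpanding}(ii) then moves that return time into $S\setminus F$. Some mechanism of this kind --- playing different translates of $S$ against each other --- is unavoidable, and it is absent from your proof.
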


This lemma may seem obvious. To see why we expend effort proving it, observe that there are finite sets of recurrence: the singleton $\{0\}$ is a set of recurrence.

In the proof we need the following property of weak mixing systems: if $\mathbf{X}$ is ergodic and $\mathbf{Y}$ is weak mixing, then the product system $\mathbf{X}\times \mathbf Y$ is ergodic.
\begin{proof}[Proof of Lemma \ref{lemShiftInfinite}]
  It suffices to show that $S\cap (A-A)$ is infinite whenever $d^{*}(A)>0$, since the hypothesis on $S$ is translation invariant.  Fix such a set $A$.

By Lemma \ref{lemCorrespondence}, $A-A$ contains a set of the form $R_{0}(D):=\{n\in \mathbb{Z}: \mu(D\cap T^{n}D)>0\}$, where $(X,\mu,T)$ is an ergodic measure preserving system and $D\subseteq X$ has $\mu(D)\geq d^{*}(A)$.  So it suffices to show that $S\cap R_{0}(D)$ is infinite.

  Let $F=\{m_{1},\dots,m_{r}\}\subseteq \mathbb Z$ be a finite set; we will show there is an  $m \in S \setminus F$ such that $\mu(D\cap T^{m}D)>0$.  Let $n_{0}\in \mathbb Z \setminus F$.  Let $\mathbf Y = (Y,\nu,R)$ be a nontrivial weak mixing system, and let $E\subseteq Y$ have positive measure and satisfy $E \cap R^{m-n_{0}}E=\varnothing$ for all $m\in F$; this is possible because $\mathbf Y$ is totally ergodic\footnote{meaning the system $(Y,\nu,R^{n})$ is ergodic for each $n\neq 0$, a consequence of weak mixing.} and $m-n_{0} \neq 0$ for $m \in F$.

  Consider the product system $\mathbf Z = (X\times Y, \mu\times \nu,T\times R) = \mathbf X \times \mathbf Y$, and let $C_{1}, C_{2} \subseteq Z$ be the following:
\[
  C_{1} := D \times E,\ \ \ C_{2}:= D\times R^{-n_{0}}E.
 \]
 Write $\eta$ for $\mu\times \nu$ and $\tilde{T}$ for $T\times R$.

Then for all $m\in F$, we have $C_{1}\cap \tilde{T}^{m} C_{2}= \varnothing$, since $E\cap R^{m-n_{0}}E = \varnothing$.  Since $\mathbf Y$ is weak mixing and $\mathbf X$ is ergodic, the system $\mathbf Z$ is ergodic.  We conclude that there is an $n$ such that $\eta(C_{1}\cap \tilde{T}^{n}C_{2})>0$.    Every translate of $S$ is a set of recurrence, so Lemma \ref{lemExpanding} implies $\eta(C_{1}\cap \tilde{T}^{m}C_{2})>0$ for some $m\in S$.  Since $C_{1}\cap \tilde{T}^{m}C_{2}=\varnothing$ for all $m\in F$,  there is an $m\in S\setminus F$ such that $\eta(C_{1}\cap \tilde{T}^{m}C_{2})>0$, and $\mu(D\cap T^{m}D)>0$ for such $m$.  Since $F$ was an arbitrary finite set, we have shown that $S\cap R_{0}(D)$ is infinite.  \end{proof}

\section{Characterization of upper Banach density}\label{secUBD}

\begin{lemma}\label{lemUBDchar}
 Let $F\subseteq \mathbb Z$ be finite and let $A\subseteq \mathbb Z$.  There is a $ n\in \mathbb Z$ such that $|(A-n)\cap F| \geq d^{*}(A)\cdot |F|$.
\end{lemma}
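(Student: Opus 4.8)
The plan is to run a short averaging argument over a long interval on which $A$ is nearly as dense as $d^{*}(A)$ allows, and then to upgrade the resulting near-inequality to the exact inequality using the integrality of $|(A-n)\cap F|$.

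First I would fix the constant $D:=\sum_{f\in F}|f|$, which depends only on $F$. By the definition of upper Banach density, $d^{*}(A)=\lim_{N\to\infty}\sup_{M}\frac{|A\cap[M,M+N-1]|}{N}$, so for each $N$ I can choose $M_{N}$ so that the intervals $I_{N}:=[M_{N},M_{N}+N-1]$ satisfy $\frac{|A\cap I_{N}|}{N}\to d^{*}(A)$ as $N\to\infty$. Next I would average the quantity $|(A-n)\cap F|=\sum_{f\in F}1_{A}(n+f)$ over $n\in I_{N}$ and interchange the order of summation to obtain
\[
\frac{1}{N}\sum_{n\in I_{N}}|(A-n)\cap F|=\sum_{f\in F}\frac{|A\cap(I_{N}+f)|}{N}.
\]
Since $I_{N}+f$ and $I_{N}$ are intervals of equal length differing only by a shift of $f$, we have $|I_{N}\setminus(I_{N}+f)|=|f|$ and hence $|A\cap(I_{N}+f)|\geq|A\cap I_{N}|-|f|$. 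Summing over $f\in F$ shows the right-hand side is at least $|F|\frac{|A\cap I_{N}|}{N}-\frac{D}{N}$, which tends to $|F|\,d^{*}(A)$ as $N\to\infty$.

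Because the maximum of a function over a finite set is at least its average, for each $N$ there is an $n_{N}\in I_{N}$ with $|(A-n_{N})\cap F|\geq|F|\frac{|A\cap I_{N}|}{N}-\frac{D}{N}$. The concluding step is an integrality observation: the quantity $|(A-n)\cap F|$ takes values in the finite set $\{0,1,\dots,|F|\}$, so its supremum $v^{*}$ over all $n\in\mathbb{Z}$ is attained at some integer $n$. The estimate above gives $v^{*}\geq|F|\frac{|A\cap I_{N}|}{N}-\frac{D}{N}$ for every $N$, and letting $N\to\infty$ yields $v^{*}\geq|F|\,d^{*}(A)$; any $n$ realizing $v^{*}$ is the desired integer.

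I expect the only genuine subtlety to be this last step. The averaging by itself only produces, for every $\varepsilon>0$, some $n$ with $|(A-n)\cap F|\geq|F|\,d^{*}(A)-\varepsilon$; replacing this by the exact non-strict inequality $\geq|F|\,d^{*}(A)$ requires invoking the fact that $|(A-n)\cap F|$ is integer-valued and bounded, so its supremum is attained and survives the limit. Everything else reduces to routine bookkeeping with the $O(D)$ boundary terms, which vanish after dividing by $N$.
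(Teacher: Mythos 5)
Your proof is correct and follows essentially the same route as the paper's: average $|(A-n)\cap F|$ over a region where $A$ has nearly maximal density, interchange the order of summation, absorb the $O(1)$ boundary terms, and finish by pigeonhole together with the integrality of $|(A-n)\cap F|$. The only cosmetic difference is that you work directly with intervals from the definition of $d^{*}$, while the paper averages over a F{\o}lner sequence $\mathbf{\Phi}$ attaining $d_{\mathbf{\Phi}}(A)=d^{*}(A)$; your explicit handling of the final limiting step (the attained supremum $v^{*}$ surviving $N\to\infty$) is, if anything, slightly cleaner than the paper's terse appeal to integrality.
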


\begin{proof}  We outline the proof in this paragraph and present an explicit proof in the next.  First, find a long interval $I$ having $\frac{1}{|I|}|I\cap A| \approx d^{*}(A)$.  We may assume that $I=\{1,\dots, N\}$ for some large $N$, $F\subseteq I$, and $(\max F)/N$ is small.   Consider the intersections $I\cap (A-n)\cap F$.  Summing their cardinalities, we get $\sum_{n=1}^{N} |I\cap (A-n)\cap F| \approx |F|\cdot |I\cap A|$, as each element of $I\cap A$ is counted approximately $|F|$ times in the sum.  So on average, $|I\cap (A-n)\cap F|$ is approximately $\frac{1}{N}\cdot |F|\cdot |I\cap A| \approx |F|\cdot d^{*}(A)$, and there must be an $n$ such that $|(A-n)\cap F| \geq d^{*}(A)$.

Here is an explicit proof: Let $\mathbf \Phi = (\Phi_{j})_{j\in \mathbb N}$ be a F{\o}lner sequence such that $d^{*}(A) = d_{\mathbf{\Phi}}(A)$.  Let $\varepsilon, \varepsilon'>0$.  For sufficiently large $j$, we have
  \begin{align*}
   \frac{1}{|\Phi_{j}|} \sum_{ n \in \Phi_{j}}|(A-n)\cap F| &= \frac{1}{|\Phi_{j}|} \sum_{ n'\in F}\sum_{ n\in \Phi_{j}} 1_{A-n}(n')  \\
   &= \frac{1}{|\Phi_{j}|} \sum_{n'\in F} \sum_{n\in \Phi_{j}} 1_{A}(n+n') \\
    &=  \sum_{ n' \in F} \frac{1}{|\Phi_{j}|}|A\cap (\Phi_{j}- n')|\\
    &\geq   \sum_{n'\in F} \frac{1}{|\Phi_{j}|}(|A\cap \Phi_{j}| - \varepsilon) \\
    &= \frac{1}{|\Phi_{j}|} (|A\cap \Phi_{j}| -\varepsilon)|F|\\
    & \geq  (d^{*}(A)-\varepsilon)|F|.
  \end{align*}
  From the above inequalities, we see that for $\varepsilon$ sufficiently small, the average $\frac{1}{|\Phi_{j}|} \sum_{ n \in \Phi_{j}}|(A-n)\cap F|$ is at least $d^{*}(A)|F|-\varepsilon'$. The pigeonhole principle then implies $|(A-n)\cap F| \geq d^{*}(A)|F| - \varepsilon'$ for some $n$.  Using the integrality of $|(A-n)\cap F|$, we conclude that $|(A-n)\cap F| \geq d^{*}(A)|F|$ for some $ n$.  \end{proof}

 Deriving the following corollary from Lemma \ref{lemUBDchar} is an easy exercise.

 \begin{corollary}
    For $A\subset \mathbb Z$, $d^{*}(A)$ is the largest number $\delta$ satisfying the following condition: for all finite sets $F\subseteq \mathbb Z$, there exists $n$ such that $|(A-n)\cap F| \geq \delta |F|$.
 \end{corollary}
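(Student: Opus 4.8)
The plan is to write $P(\delta)$ for the stated condition --- ``for every finite $F\subseteq\mathbb Z$ there is an $n$ with $|(A-n)\cap F|\geq \delta|F|$'' --- and to show that $P$ holds at $\delta=d^{*}(A)$ but fails for every $\delta>d^{*}(A)$. Note that $P(\delta)$ is downward closed in $\delta$: if $|(A-n)\cap F|\geq\delta|F|$ and $\delta'\leq\delta$, then the same $n$ witnesses $P(\delta')$. Consequently, establishing these two facts simultaneously identifies $d^{*}(A)$ as the largest $\delta$ satisfying the condition, and in particular shows that the maximum is attained.

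The lower bound is immediate: the condition $P(d^{*}(A))$ is exactly the assertion of Lemma \ref{lemUBDchar}, so nothing further is required there.

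The content lies in the upper bound, that no $\delta>d^{*}(A)$ satisfies $P(\delta)$; this is the step I expect to be the main obstacle, though it is not difficult. The key observation is the identity $|(A-n)\cap F|=|A\cap(F+n)|$, which comes from the bijection $x\mapsto x+n$ between $(A-n)\cap F$ and $A\cap(F+n)$. Given $\delta>d^{*}(A)$, I would invoke the definition $d^{*}(A)=\lim_{N\to\infty}\sup_{M}\frac{|A\cap[M,M+N-1]|}{N}$: since this limit is strictly below $\delta$, there is an $N$ with $\sup_{M}|A\cap[M,M+N-1]|<\delta N$. Taking $F=\{0,1,\dots,N-1\}$, so that $F+n$ is the interval $[n,n+N-1]$ for every $n$, the identity above yields
\[
|(A-n)\cap F|=|A\cap[n,n+N-1]|\leq \sup_{M}|A\cap[M,M+N-1]|<\delta N=\delta|F|.
\]
Thus this single finite set $F$ witnesses the failure of $P(\delta)$ for all $n$ at once, which completes the upper bound and hence the proof. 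I would emphasize that only the existence of the limit defining $d^{*}(A)$ (not any finer subadditivity fact) is needed to produce the witnessing $N$.
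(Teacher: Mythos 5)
Your proof is correct and is exactly the derivation the paper intends: the paper leaves the corollary as an easy exercise following Lemma \ref{lemUBDchar}, and you use that lemma for the bound $\delta = d^{*}(A)$ and the definition of $d^{*}$ (via long intervals $F=\{0,\dots,N-1\}$ and the identity $|(A-n)\cap F|=|A\cap(F+n)|$) to rule out every $\delta>d^{*}(A)$. The downward-closure observation correctly handles the claim that the maximum is attained, so there is nothing to add.
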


\section{The correspondence principle}\label{secCorrespondence}

The following version of Furstenberg's correspondence principle is a special case of Proposition 3.1 in \cite{BHK}.

\begin{lemma}\label{lemCorrespondence}
  Let $A\subseteq \mathbb Z$.  Then there is an ergodic measure preserving system $(X,\mu,T)$ and a set $D\subseteq X$ having $\mu(D)=d^{*}(A)$, and
  \[
    d^{*}(A\cap (A-m)) \geq \mu(D\cap T^{-m}D)
  \]
  for all $m\in \mathbb Z$.
\end{lemma}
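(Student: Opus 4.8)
The plan is to realize $A$ inside a symbolic system and then extract an ergodic component with the correct marginals. First I would work in the full shift $X = \{0,1\}^{\mathbb Z}$, a compact metric space, with the left shift $T$ given by $(Tx)(n) = x(n+1)$ and the clopen cylinder $D = \{x\in X : x(0)=1\}$. Writing $\xi = 1_{A}\in X$ for the indicator of $A$ regarded as a point of $X$, one checks directly that $T^{n}\xi \in D$ iff $n\in A$, and $T^{n}\xi \in D\cap T^{-m}D$ iff $\xi(n)=\xi(n+m)=1$, i.e. iff $n\in A\cap(A-m)$.

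Next I would choose a F\o lner sequence $\mathbf\Phi=(\Phi_{j})$ of intervals realizing the density, so that $d_{\mathbf\Phi}(A)=d^{*}(A)$, and form the empirical measures $\mu_{j} := \frac{1}{|\Phi_{j}|}\sum_{n\in\Phi_{j}}\delta_{T^{n}\xi}$. By weak-$*$ compactness of the space of Borel probability measures on $X$, pass to a subsequence along which $\mu_{j}\to\mu$; the F\o lner property forces $\mu$ to be $T$-invariant. Since $1_{D}$ and $1_{D\cap T^{-m}D}$ are continuous (the cylinders are clopen), evaluating the limit gives $\mu(D) = \lim_{j}\frac{|A\cap\Phi_{j}|}{|\Phi_{j}|} = d^{*}(A)$ and $\mu(D\cap T^{-m}D) = \lim_{j}\frac{|(A\cap(A-m))\cap\Phi_{j}|}{|\Phi_{j}|}\le d^{*}(A\cap(A-m))$, the last step because $d^{*}$ is the maximum of $d_{\mathbf\Psi}$ over all F\o lner sequences $\mathbf\Psi$. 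Thus $\mu$ already meets the required inequalities, but it need not be ergodic.

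The remaining, and main, task is to upgrade $\mu$ to an ergodic measure without destroying these estimates. The key structural fact is that $\mu$ is supported on the orbit closure $\overline{\{T^{n}\xi : n\in\mathbb Z\}}$, since each $\mu_{j}$ is. Every $x$ in this orbit closure has all of its finite patterns occurring in $\xi$, so for each window length $N$ the number of $n\in[0,N)$ with $x(n)=x(n+m)=1$ equals $|(A\cap(A-m))\cap[n_{i},n_{i}+N)|$ for a suitable $n_{i}$, hence never exceeds $\sup_{M}|(A\cap(A-m))\cap[M,M+N)|$; dividing by $N$ and letting $N\to\infty$ shows the upper density along intervals of the correlation set $\{n: x(n)=x(n+m)=1\}$ is at most $d^{*}(A\cap(A-m))$, and likewise the upper density of the $1$-set of $x$ is at most $d^{*}(A)$.

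Finally I would take an ergodic decomposition $\mu = \int\mu_{\omega}\,dP(\omega)$. For $P$-a.e. $\omega$ the pointwise ergodic theorem identifies $\mu_{\omega}(D\cap T^{-m}D)$ with the genuine Ces\`aro density along $[0,N)$ of the $m$-correlation set of a $\mu_{\omega}$-generic point $x$; by the previous paragraph this is $\le d^{*}(A\cap(A-m))$ for every $m$, and similarly $\mu_{\omega}(D)\le d^{*}(A)$. Since $\int\mu_{\omega}(D)\,dP = \mu(D) = d^{*}(A)$ while every component satisfies $\mu_{\omega}(D)\le d^{*}(A)$, equality $\mu_{\omega}(D)=d^{*}(A)$ must hold for $P$-a.e. $\omega$; fixing any such component yields the desired ergodic system with all inequalities intact. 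I expect the passage to the ergodic component — specifically, verifying that the correlation inequalities descend to the individual components rather than merely holding on average — to be the crux, and it is precisely the uniform pattern-containment property of the orbit closure, combined with the pointwise ergodic theorem, that makes this work. No diagonal argument over the countably many $m$ is needed, since the single weak-$*$ limit controls all cylinders simultaneously.
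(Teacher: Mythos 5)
Your proof is correct, and there is no gap: the passage to an ergodic component is handled properly, since the pattern-containment bound on the orbit closure makes the correlation inequalities hold for \emph{every} ergodic component (via Birkhoff genericity), while the equality $\mu_{\omega}(D)=d^{*}(A)$ is recovered from the average $\int \mu_{\omega}(D)\,dP=\mu(D)=d^{*}(A)$ together with the pointwise upper bound. The paper itself gives no proof of Lemma \ref{lemCorrespondence}, citing instead Proposition 3.1 of \cite{BHK}, and your argument --- the full shift $\{0,1\}^{\mathbb Z}$, weak-$*$ limits of empirical measures along intervals realizing $d^{*}(A)$, and ergodic decomposition --- is essentially the standard proof of that cited result.
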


The next lemma is a partial converse of the correspondence principle.  It is a special case of Proposition 3.2.7 in \cite{McCutcheonBook}.

\begin{lemma}\label{lemReverseCorrespondence}
  Let $(X,\mu,T)$ be a measure preserving system and let $D\subseteq X$ have $\mu(D)>0$.  Then there is a set $A\subseteq \mathbb Z$ having $d^{*}(A)\geq \mu(D)$ such that
  \[
    A - A \subseteq \{m: \mu(D\cap T^{-m}D)>0\}.
  \]
  \end{lemma}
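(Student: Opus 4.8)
The plan is to realize $A$ as the set of return times of a single carefully chosen point to a slightly shrunken copy of $D$, where the shrinking is engineered so that the difference-set inclusion holds automatically for every orbit, and the density bound then comes from the pointwise ergodic theorem. Note that ergodicity of $(X,\mu,T)$ is not assumed, so I will work with the general (non-ergodic) version of the ergodic theorem.

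First I would isolate the bad differences. Set $N := \{m \in \mathbb{Z} : \mu(D \cap T^{-m}D) = 0\}$ and define
\[
  \tilde D := D \setminus \bigcup_{m \in N}(D \cap T^{-m}D).
\]
Since $N \subseteq \mathbb{Z}$ is countable and each set $D \cap T^{-m}D$ with $m \in N$ is null, the deleted set is null, so $\mu(\tilde D) = \mu(D)$. The point of this deletion is the following observation, valid for \emph{every} $x \in X$: if $A := \{n \in \mathbb{Z} : T^n x \in \tilde D\}$ and $n_1,n_2 \in A$ with $m := n_1 - n_2$, then $T^{n_2}x \in \tilde D \subseteq D$ while $T^{n_1}x = T^{m}(T^{n_2}x) \in D$, so that $T^{n_2}x \in D \cap T^{-m}D$. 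Were $m \in N$, the whole set $D \cap T^{-m}D$ would have been removed from $D$, contradicting $T^{n_2}x \in \tilde D$; hence $\mu(D \cap T^{-m}D) > 0$. This already yields $A - A \subseteq \{m : \mu(D\cap T^{-m}D) > 0\}$ for any choice of the base point $x$.

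It then remains only to select $x$ so that $d^{*}(A) \geq \mu(D)$. By the pointwise ergodic theorem, for $\mu$-a.e.\ $x$ the averages $\frac{1}{M}\sum_{n=0}^{M-1} 1_{\tilde D}(T^{n}x)$ converge to the conditional expectation $\mathbb{E}[1_{\tilde D} \mid \mathcal I](x)$ onto the $\sigma$-algebra $\mathcal I$ of $T$-invariant sets. Because $\int \mathbb{E}[1_{\tilde D}\mid \mathcal I]\,d\mu = \mu(\tilde D) = \mu(D)$, the set on which this conditional expectation is $\geq \mu(D)$ has positive measure; intersecting it with the full-measure set where the averages converge, I may pick an $x$ for which $\lim_{M\to\infty}\frac{1}{M}\lvert A \cap \{0,\dots,M-1\}\rvert \geq \mu(D)$. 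Since the intervals $\{0,\dots,M-1\}$ are admissible in the definition of $d^{*}$, upper Banach density dominates this one-sided Cesàro density, giving $d^{*}(A) \geq \mu(D)$ and completing the argument.

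The one genuinely load-bearing idea is the shrinking step: the inclusion $A-A \subseteq \{m : \mu(D\cap T^{-m}D)>0\}$ must hold for the specific orbit of $x$, not merely on average, and deleting the countable null union $\bigcup_{m\in N}(D\cap T^{-m}D)$ achieves this uniformly in $x$ before any point is even chosen. Everything after that — extracting a generic point with the right visiting frequency and comparing Cesàro density to $d^{*}$ — is a routine application of the ergodic theorem.
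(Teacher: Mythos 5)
Your proof is correct. Note that the paper itself does not prove this lemma at all: it simply cites Proposition 3.2.7 of McCutcheon's book, so your argument serves as a self-contained replacement for that citation, and it is essentially the standard argument behind the cited result. The two steps both check out: deleting the countable null union $\bigcup_{m\in N}(D\cap T^{-m}D)$ costs no measure and forces the return-time set of \emph{every} point of $\tilde D$ to have its difference set inside $\{m:\mu(D\cap T^{-m}D)>0\}$ (if $n_{1},n_{2}$ are return times to $\tilde D$ with $m=n_{1}-n_{2}\in N$, then $T^{n_{2}}x$ would lie in a deleted set); and the non-ergodic Birkhoff theorem plus the observation that $\mathbb{E}[1_{\tilde D}\mid\mathcal I]\geq\mu(D)$ on a set of positive measure produces a point whose one-sided Ces\`aro density of return times is at least $\mu(D)$, which lower-bounds $d^{*}(A)$ since upper Banach density dominates the density along the intervals $\{0,\dots,M-1\}$. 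The only hypothesis you silently use is invertibility of $T$ (to define returns at negative times), which the paper assumes throughout.
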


  Combining Lemmas \ref{lemCorrespondence} and \ref{lemReverseCorrespondence} yields the following.

  \begin{lemma}\label{lemForwardAndReverseCorrespondence}
    Let $\delta>0$ and let $E\subseteq \mathbb Z$.  The following are equivalent:
   \begin{enumerate}
     \item[(i)] $E$ contains
   $A-A$, where $A\subseteq \mathbb Z$ and $d^{*}(A)\geq \delta$.
   \item[(ii)]$E$ contains $R_{0}(D)$ where $(X,\mu,T)$ is a measure preserving system and $D\subseteq X$ has $\mu(D)\geq \delta$.
   \item[(iii)]  $E$ contains $R_{0}(D)$ where $(X,\mu,T)$ is an ergodic measure preserving system and $D\subseteq X$ has $\mu(D)\geq \delta$.
 \end{enumerate}  \end{lemma}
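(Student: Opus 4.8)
The plan is to prove the three statements equivalent by a cycle (i) $\implies$ (iii) $\implies$ (ii) $\implies$ (i), drawing the forward direction from Lemma \ref{lemCorrespondence} and the reverse direction from Lemma \ref{lemReverseCorrespondence}; the middle link (iii) $\implies$ (ii) is immediate, since every ergodic system is in particular a measure preserving system, so any witnessing pair for (iii) serves verbatim as a witness for (ii). Before starting I would record one bookkeeping fact: because $T$ is invertible and measure preserving, $\mu(D\cap T^{n}D)=\mu(D\cap T^{-n}D)$, so $R_{0}(D)=\{n:\mu(D\cap T^{n}D)>0\}=\{n:\mu(D\cap T^{-n}D)>0\}$. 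This reconciles the $T^{-m}$ appearing in the two correspondence lemmas with the $T^{n}$ in the definition of $R_{0}$.

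For (i) $\implies$ (iii), I would suppose $E\supseteq A-A$ with $d^{*}(A)\geq\delta$ and apply Lemma \ref{lemCorrespondence} to $A$, obtaining an ergodic system $(X,\mu,T)$ and a set $D$ with $\mu(D)=d^{*}(A)\geq\delta$ and $d^{*}(A\cap(A-m))\geq\mu(D\cap T^{-m}D)$ for all $m$. If $m\in R_{0}(D)$, then $\mu(D\cap T^{-m}D)>0$ by the symmetry above, hence $d^{*}(A\cap(A-m))>0$, so $A\cap(A-m)\neq\varnothing$ and $m\in A-A\subseteq E$. Thus $R_{0}(D)\subseteq E$ with $\mu(D)\geq\delta$ and $(X,\mu,T)$ ergodic, which is exactly (iii).

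For (ii) $\implies$ (i), I would suppose $E\supseteq R_{0}(D)$ for a measure preserving system $(X,\mu,T)$ and a set $D$ with $\mu(D)\geq\delta$, and apply Lemma \ref{lemReverseCorrespondence} to produce a set $A$ with $d^{*}(A)\geq\mu(D)\geq\delta$ and $A-A\subseteq\{m:\mu(D\cap T^{-m}D)>0\}=R_{0}(D)\subseteq E$. This is precisely statement (i). Together with the trivial (iii) $\implies$ (ii), the cycle closes and the three conditions are equivalent.

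Since both halves are essentially verbatim applications of the two cited correspondence principles, I do not expect a genuine obstacle here; the only point demanding care is the direction of the inequality in each lemma. Lemma \ref{lemCorrespondence} bounds the density correlation \emph{below} by the dynamical one, which is what converts positivity of $\mu(D\cap T^{-m}D)$ into membership in $A-A$ and yields the inclusion $R_{0}(D)\subseteq A-A$; Lemma \ref{lemReverseCorrespondence} supplies the opposite inclusion $A-A\subseteq R_{0}(D)$. Matching these two inclusions running in opposite directions is exactly what manufactures the equivalence, so the effort is conceptual rather than computational.
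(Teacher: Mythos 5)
Your proof is correct and follows exactly the route the paper intends: the paper gives no detailed argument, simply stating that combining Lemma \ref{lemCorrespondence} (yielding (i) $\implies$ (iii)) with Lemma \ref{lemReverseCorrespondence} (yielding (ii) $\implies$ (i)) and the trivial implication (iii) $\implies$ (ii) proves the lemma. Your added bookkeeping observation that $\mu(D\cap T^{n}D)=\mu(D\cap T^{-n}D)$ for invertible $T$, reconciling $R_{0}(D)$ with the $T^{-m}$ in both correspondence lemmas, is exactly the detail the paper leaves implicit.
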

Unfortunately, the connection between popular differences $P_{c}(A)$ and the popular recurrence sets $R_{c}(D)$ for $c>0$ is not as straightforward as the connection between difference sets $A-A$ and recurrence sets $R_{0}(D)$.  Rather than attempt to prove an analogue of Lemma \ref{lemForwardAndReverseCorrespondence} for popular differences and popular recurrence, in Section \ref{secProofsOfCorollaries} we resort to some ad hoc constructions using uniquely ergodic systems.

\bibliographystyle{amsplain}
\frenchspacing
\bibliography{For_Z_Bibliography}

\end{document}